\let\mc\mathcal
\let\mathcal\mc
\let\frak\mathfrak
\let\Bbb\mathbb
\def\>{\relax\ifmmode\mskip.666667\thinmuskip\relax\else\kern.111111em\fi}
\def\<{\relax\ifmmode\mskip-.333333\thinmuskip\relax\else\kern-.0555556em\fi}
\def\vsk#1>{\vskip#1\baselineskip}
\def\vv#1>{\vadjust{\vsk#1>}\ignorespaces}
\def\vvn#1>{\vadjust{\nobreak\vsk#1>\nobreak}\ignorespaces}
\let\Medskip\medskip
\def\medskip{\par\Medskip}
\let\Bigskip\bigskip
\def\bigskip{\par\Bigskip}
\let\Maketitle\maketitle
\def\maketitle{\Maketitle\thispagestyle{empty}\let\maketitle\empty}
\newtheorem{thm}{Theorem}[section]
\newtheorem{cor}[thm]{Corollary}
\newtheorem{lem}[thm]{Lemma}
\newtheorem{prop}[thm]{Proposition}
\numberwithin{equation}{section}
\theoremstyle{definition}
\newtheorem*{rem}{Remark}
\let\mc\mathcal
\let\nc\newcommand
\nc{\on}{\operatorname}
\nc{\Z}{{\mathbb Z}}
\nc{\C}{{\mathbb C}}
\nc{\N}{{\mathbb N}}
\nc{\pone}{{\mathbb C}{\mathbb P}^1}
\nc{\arr}{\rightarrow}
\nc{\larr}{\longrightarrow}
\nc{\al}{\alpha}
\nc{\W}{{\mc W}}
\nc{\la}{\lambda}
\nc{\su}{\widehat{{\mathfrak sl}}_2}
\nc{\g}{{\mathfrak g}}
\nc{\h}{{\mathfrak h}}
\nc{\m}{{\mathfrak m}}
\nc{\n}{{\mathfrak n}}
\nc{\Gm}{\Gamma}
\nc{\La}{\Lambda}
\nc{\gl}{\widehat{\mathfrak{gl}_2}}
\nc{\bi}{\bibitem}
\nc{\om}{\omega}
\nc{\Res}{\on{Res}}
\nc{\gm}{\gamma}
\nc{\Om}{\Omega}
\nc{\yy}{{\bs y}}
\nc{\kk}{{\bs k}}
\nc{\Tau}{\mathrm{T}}
\def\z{\mathfrak z}
\def\zA{\z(A_{2n-1}^{(1)})}
\def\zAA{\z(C_{n}^{(1)})}
\def\Res{\on{Res}}
\def\Wr{\on{Wr}}
\def\B{{\mc B}}
\def\D{{\mc D}}
\def\F{{\mc F}}
\def\L{{\mc L}}
\def\M{{\mc M}}
\def\V{{\mc V}}
\let\Dl\Delta
\let\si\sigma
\let\Si\Sigma
\let\der\partial
\let\minus\setminus
\let\ge\geqslant
\let\geq\geqslant
\let\leq\leqslant
\nc{\gln}{\mathfrak{gl}_N}
\nc{\sln}{\mathfrak{sl}_N}
\def\beq{\begin{equation}}
\def\eeq{\end{equation}}
\def\be{\begin{equation*}}
\def\ee{\end{equation*}}
\nc{\bean}{\begin{eqnarray}}
\nc{\eean}{\end{eqnarray}}
\nc{\bea}{\begin{eqnarray*}}
	\nc{\eea}{\end{eqnarray*}}
\nc{\bs}{\boldsymbol}
\nc{\Ref}[1]{{\rm(\ref{#1})}}
\nc{\R}{\Bbb R}
\nc{\glN}{\mathfrak{gl}_N}
\nc{\glNt}{\mathfrak{gl}_N[t]}
\nc{\s}{sing}
\nc{\Oml}{{\Om_{\bs\la}}}
\nc{\OmLb}{{\Om_{\bs\La,\bs\la,\bs b}}}
\nc{\Ol}{{\mc O_{\bs\la}}}
\nc{\OLb}{{\mc O_{\bs\La,\bs\la,\bs b}}}
\nc{\Ml}{{\mc M_{\bs\la}}}
\nc{\Mlb}{{\mc M_{\bs\La,\bs\la,\bs b}}}
\nc{\Blb}{{\B_{\bs\La,\bs\la,\bs b}}}
\nc{\Omn}{{\Omega_{\bs n,\bs b,\bs K}}}
\nc{\Omlb}{{\bar\Om_{\bs\la}}}
\nc{\VSl}{{(\V^S)_{\bs\la}}}
\nc{\Dlb}{\Dl_{\bs\La,\bs\la,\bs b,\bs K}}
\nc{\ep}{\epsilon}
\nc{\Vn}{{V^{\otimes n}}}
\nc{\Il}{{\mc I_{\bs\la}}}
\nc{\bla}{{\bs\la}}
\nc{\Fla}{\F_{\bs\la}}
\nc{\GL}{{GL_n(\C)}}
\nc{\ga}{\gamma}
\nc{\Ga}{\Gamma}
\def\Bb{{\mc B}}
\nc{\Nn}{{\mc N}}
\nc\Ll{{\mc L}}
\def\Wr{\on{Wr}}
\nc{\PCN}{{   (\C[x])^2   }}
\nc{\slt}{{\frak{sl}_{2n}}}
\nc\ad{{\on{ad}}}
\nc\gA{{\g(A_{2n-1}^{(1)})}}
\nc{\gAA}{{\g(C_{n}^{(1)})}}
\nc\At{{A_{2n-1}^{(1)}}}
\nc\Dia{{\on{Diag}}}
\nc\AT{{C^{(1)}_{n}}}
\nc\ox{{\otimes}}
\begin{document}
	
	\hrule width0pt
	\vsk->

	\title[ CRITICAL POINTS  AND MKDV HIERARCHY OF  TYPE  $C^{(1)}_{n}$]
{ CRITICAL POINTS OF MASTER FUNCTIONS AND
\\  MKDV HIERARCHY OF TYPE  $C^{(1)}_{n}$}

\author[Alexander Varchenko,  Tyler Woodruff]
{ALEXANDER VARCHENKO$^\star$, TYLER WOODRUFf$^\dagger$}

	\maketitle

	\begin{center}
		{\it Department of Mathematics, University of North Carolina
			at Chapel Hill\\ Chapel Hill, NC 27599-3250, USA\/}
	\end{center}

\vsk>
{\leftskip3pc \rightskip\leftskip \parindent0pt \Small
{\it Key words\/}: Critical points, master functions, mKdV hierarchies, Miura opers, \\
\phantom{aaaaaaaaaa} affine Lie algebras 
\vsk.6>
{\it 2010 Mathematics Subject Classification\/}: 37K20 (17B80, 81R10)\par}

	{\let\thefootnote\relax
		\footnotetext{\vsk-.8>\noindent
			$^\star$ E-mail: anv@email.unc.edu,  supported in part by NSF grant DMS-1665239 \\
\			$^\dag$ E-mail: tykwood@gmail.com}}
	
	\medskip
	
	\begin{abstract}
We consider the population of critical points,
generated from the  critical point of the master function with no variables, which is
associated with the trivial representation of the twisted affine Lie algebra $\AT$.
The population is naturally partitioned into an infinite collection of complex cells $\C^m$, where $m$ are
positive integers.
For each  cell we define an injective rational map $\C^m \to \mc M(\AT)$ of the cell to the space $\mc M(\AT)$
of Miura opers of type $\AT$. We show that the image of the map is invariant with respect 
to all mKdV flows on $\mc M(\AT)$ and the image is point-wise fixed by all  mKdV
flows $\frac\der{\der t_r}$ with index $r$ greater than $2m$.

\end{abstract}
	
	{\small \tableofcontents
		
	}

	\setcounter{footnote}{0}
	\renewcommand{\thefootnote}{\arabic{footnote}}

	\section{Introduction}

Let $\g$ be a Kac-Moody algebra with invariant scalar product $(\,,\,)$,
 $\h\subset \g$ Cartan subalgebra,
  $\al_0,\dots,\al_n$ simple roots. Let
$\Lambda_1,\dots,\Lambda_N$ be dominant integral weights,
 $k_0,\dots, k_n$
nonnegative integers, $k=k_0+\dots+k_n$.

Consider $\C^N$ with coordinates $z=(z_1,\dots,z_N)$.
Consider $\C^k$ with coordinates $u$ collected into $n+1$ groups, the $j$-th group consisting of $k_j$ variables,
\bea
u=(u^{(0)},\dots,u^{(n)}),
\qquad
u^{(j)} = (u^{(j)}_1,\dots,u^{(j)}_{k_j}).
\eea
The {\it master function} is the multivalued function on $\C^k\times\C^N$ defined by the formula
\bean
\label{mmff}
&&
\Phi(u,z) = \sum_{a<b} (\La_a,\La_b) \ln (z_a-z_b)
-  \sum_{a,i,j} (\al_j,\La_a)\ln (u^{(j)}_i-z_a) +
\\
\notag
&&
+ \sum_{j< j'} \sum_{i,i'} (\al_j,\al_{j'})
\ln (u^{(j)}_i-u^{(j')}_{i'})
+  \sum_{j} \sum_{i<i'} (\al_j,\al_{j})
\ln (u^{(j)}_i-u^{(j)}_{i'}),
\eean
with singularities at the places where the arguments of the logarithms are equal to zero.

A point in $\C^k\times \C^N$ can be interpreted as a collection of particles in $\C$:\ $z_a, u^{(j)}_i$.
A particle $z_a$ has weight $\La_a$, a particle $u^{(j)}_i$ has weight $-\al_j$.
The particles interact pairwise. The interaction of two particles is determined by
the scalar product of their weights.
The master function is the "total energy" of the collection of particles.

Notice that all scalar products are integers. So the master function is the logarithm
of a rational function. From a "physical" point of view, all interactions are integer multiples of
a certain unit of measurement. This is important for what will follow.

The variables $u$ are the {\it true} variables, variables $z$ are {\it parameters}.
We may think that the positions of $z$-particles are fixed and the $u$-particles can
move.

There are "global" characteristics of this situation,
\bea
I(z,\kappa) = \int e^{\Phi(u,z)/\kappa} A(u,z) du ,
\eea
where $A(u,z)$ is a suitable density function, $\kappa$ a parameter,
and
there are "local" characteristics  -- critical points
of the master function with respect to the $u$-variables,
\bea
d_u \Phi(u,z)=0 .
\eea
A critical point is an equilibrium position of the
$u$-particles  for fixed positions of the $z$-particles. In this paper we are interested in the equilibrium positions of
the $u$-particles.

Examples of master functions associated with  $\g=\frak{sl}_2$
were considered by Stieltjes and Heine in 19th century, see for example \cite{Sz}.
The master functions were introduced in \cite{SV}
to construct integral representations for solutions of the KZ equations, see also \cite{V1, V2}.

The critical points of master functions with respect to the $u$-variables were used  to
find eigenvectors in the associated Gaudin models by the Bethe ansatz method,
see   \cite{BF, RV, V3}. In important cases the algebra of functions on the critical set of a
master function is closely related to Schubert calculus, see \cite{MTV}.

In \cite{ScV, MV1} it was observed that the critical points of master functions with respect to the $u$-variables
can be deformed and
form families. Having one critical point, one can construct a family of new critical points. The family is
called a population of critical points. A point of the population is a critical point of the same master function
or of another master function associated with the same $\g,\La_1,\dots,\La_N$ but with  different integer parameters
$k_0,\dots,k_n$. The population is a variety isomorphic to the flag variety of the Kac-Moody algebra $\g^t$ Langlands dual
to $\g$, see \cite{MV1, MV2, F}.

In \cite{VWr}, it was discovered that the population, 
originated from the critical point of the master function associated
with the affine Lie algebra $\widehat{\frak{sl}}_{n+1}$ and the parameters $N=0, k_0=\dots=k_n=0$, is connected with the mKdV
integrable hierarchy associated with $\widehat{\frak{sl}}_{n+1}$. Namely, that population can be naturally embedded into the
space of $\widehat{\frak{sl}}_{n+1}$ Miura opers so that the image of the embedding is invariant with respect
to all mKdV flows on the space of Miura opers. For $n=1$, that result follows from  the classical paper by M.\,Adler and J.\,Moser \cite{AM},
which served as a motivation for \cite{VWr}.

The case of   the twisted affine Lie algebra $A^{(2)}_{2n}$ was considered in \cite{VW, VWW}.
In this  paper we prove analogous statements for the twisted affine Lie algebra $\AT$.

\smallskip
In Sections \ref{sec KM1} - \ref{sec MKDV} we follow the paper \cite{DS} by V.\,Drinfled and V.\,Sokolov.  We
 review the affine Lie algebras $A^{(1)}_{2n-1}$ and $\AT$, the associated mKdV and KdV hierarchies,  Miura maps. 
In particular,  we describe the $\AT$ mKdV hierarchy as a sequence of commuting flows on the infinite-dimensional space
$ \mc M(\AT)$ of the $\AT$ Miura opers.

In Section \ref{tmaps} we study the tangent maps to Miura maps.
In Section \ref{sec gene}, we introduce our master functions,
\bean
\label{mmmfff}
		\Phi(u, k)
& =& 		2\sum_{i<i'} \ln (u^{(0)}_i-u^{(0)}_{i'})			+4\sum_{j=1}^{n-1} \sum_{i<i'} 
		\ln (u^{(j)}_i-u^{(j)}_{i'})
		\\
\notag
	&+	& 
		  2\sum_{i<i'} 
		\ln (u^{(n)}_i-u^{(n)}_{i'}) -2\sum_{j=0}^{n-1} \sum_{i,i'} 
		\ln (u^{(j)}_i-u^{(j+1)}_{i'}).
\eean
This master function is the special case of the master function in \Ref{mmff}. The master function in \Ref{mmmfff} is defined by formula
\Ref{mmff} for $\g$ being the Langlands dual to $\AT$ and  $N=0$, see a remark in Section \ref{mM}.

 Following \cite{MV1, MV2, VWr}, we describe
the generation procedure of new critical points starting from a given critical point of $\Phi(u,k)$. We define the population of critical points
generated from the critical point of the function with no variables, namely, the function corresponding to the parameters 
$k_0=\dots = k_n=0$.
That population is partitioned into complex cells $\C^m$ labeled by degree increasing sequences $J=(j_1,\dots,j_m)$,
see the definition in Section \ref{sec degree transf}. 

In Theorem \ref{one gen} we deduce from \cite{MV3}
that every critical point of the master function in \Ref{mmmfff} with arbitrary parameters $k_0,\dots,k_n$  belongs a cell of our population.
Moreover, a function in \Ref{mmmfff} with some parameters $k_0,\dots,k_n$ either does not have critical points at all or its critical points form
a cell $\C^m$ corresponding to a degree increasing sequence.

In Section \ref{sec cr and Miu}, to every degree increasing sequence $J$ we assign a rational injective
map $\mu^J : \C^m \to \mc M(\AT)$ of the cell corresponding to $J$ to the space
$\mc M(\AT)$ of Miura opers of type $\AT$. We describe  properties of that map.

In Section \ref{sec Vector fields}, we formulate and prove our main result. Theorem \ref{thm main} says that for any degree increasing  sequence,
the variety $\mu^J(\C^m)$ is invariant with respect to all mKdV flows on $\mc M(\AT)$ and that variety is point-wise fixed by all
flows $\frac\der{\der t_r}$ with index $r$ greater than $2m$, see Theorem \ref{thm main}.

This theorem  shows that there is a deep interrelation between the critical set of the master functions of the form \Ref{mmmfff}
and rational finite-dimensional 
submanifolds of the space
$\mc M(\AT)$, invariant with respect to all flows of the $\AT$ mKdV hierarchy.

 Initially the critical points of the master functions were related to
quantum integrable systems of the Gaudin type through the Bethe ansatz, \cite{SV, BF, RV, V3}.  Our result shows
 that the critical points are also related to the classical integrable systems, namely, the mKdV hierarchies.

In the next papers we plan to extend this result to other affine Lie algebras.

	\section{Kac-Moody algebra of type $A_{2n-1}^{(1)}$}
	\label{sec KM1}
	In this section we follow \cite[Section 5]{DS}.
	
	\subsection{Definition}

	For $n\geq 2$, consider the $2n \times 2n$ Cartan matrix of type  $A_{2n-1}^{(1)}$,
	\bea
	A_{2n-1}^{(1)}= \left(
	\begin{matrix}
		a_{0,0} & a_{0,1}  & \dots & a_{0,2n-1} \\
		\dots  & \dots  & \dots  & \dots \\
		\dots  & \dots  & \dots  & \dots \\
		a_{2n-1,0}  & a_{2n-1,1}  & \dots & a_{2n-1,2n-1}
	\end{matrix}
	\right)=\left(
	\begin{matrix}
		\phantom{a}2 & -1  & \phantom{a}0 & \dots & \phantom{a}0 &  -1\\
		-1  & \phantom{a}2  & -1& \dots & \phantom{a}0 & \phantom{a}0 \\
		\phantom{a}0 & -1  & \phantom{a}2 & \dots & \dots & \dots \\
		\dots & \dots & \dots & \dots & \dots & \dots \\
		\phantom{a}0 & \phantom{a}0 & \phantom{a}0 &\dots &\phantom{a}2  &-1  \\
		-1 & \phantom{a}0 &\phantom{a}0 & \dots &-1 & \phantom{a}2 \\
	\end{matrix}
	\right).
	\eea
	For example, for $n=2$, we have
		\bea
	A_{3}^{(1)}= \left(
	\begin{matrix}
		\phantom{a}2 & -1  & \phantom{a}0 &  -1\\
		-1  & \phantom{a}2  & -1&  \phantom{a}0 \\
		\phantom{a}0 & -1  & \phantom{a}2 & -1 \\
	 -1 & \phantom{a}0  &-1 & \phantom{a} 2 \\
	\end{matrix}
	\right).
	\eea
	The Kac-Moody algebra $\gA$ {\it of type} $A_{2n-1}^{(1)}$ is the Lie algebra with {\it canonical generators} $E_i,H_i,F_i\in\gA$, $ i=0, \dots, 2n-1$,
	subject to the relations:
	\begin{eqnarray*}
	&&	
	\phantom{aaaaaaaaaaaaaaaaaa}
	[E_i,F_j]=\delta_{i,j}H_i,
	\\
	&&
	[H_i, E_j] = a_{i,j} E_j , 
	\qquad
	[H_i,  F_j] = - a_{i,j} F_j,
	\qquad
	(\on{ad} E_i)^{1-a_{i,j}} E_j=0,
	\\
	&&
	(\on{ad}  F_i)^{1-a_{i,j}} F_j=0,
	\quad 	\phantom{a}
	[H_i,H_j] = 0,
	\qquad \quad\quad
	\sum_{i=0}^{2n-1}H_{i}=0,
	\end{eqnarray*}
	see \cite[Section 5]{DS}.
	The Lie algebra $\gA$ is graded with respect to the {\it standard grading}, $\deg E_i=1, \deg F_i=-1$,\ $ i=0, \dots,2n-1 $.
	Let $\gA^j=\{x\in\g(A_{2n-1}^{(1)})\ |\ \deg x=j\}$, then $\gA = \oplus_{j\in\Z}\,\gA^j$.
	
	Notice that $\g(A_{2n-1}^{(1)})^0$ is the $2n-1$-dimensional space generated by the $H_i$.
	Denote $\h=\g(A_{2n-1}^{(1)})^0$. Introduce elements $\al_j$ of the dual space $\h^*$ by the conditions
	$\langle \al_j, H_i\rangle =a_{i,j}$ for $i,j=0,\dots,2n-1$.
	For $j=0,1,\dots, 2n-1$, we denote by $\n_j^-\subset \gA$ the Lie subalgebra generated by $F_i$, $i\in\{0,1,\dots, 2n-1\},\, i\ne j$.
	For example, $\n^-_0$ is generated by $F_1,F_2,\dots, F_{2n-1}$.

	\subsection{Realizations of $\gA$}
	\label{Realizations of gA}

	Consider the complex Lie algebra $\frak{sl}_{2n}$ with standard basis $e_{i,j}$,\ $i,j=1, \dots,2n$.	
	Let $w=e^{2\pi i/2n}$.
	Define the {\it Coxeter automorphism} $C : \frak{sl}_{2n}\to \frak{sl}_{2n} $  of order $2n$ by the formula
	\bea
	C(X) = SXS^{-1},\ \ S = \on{diag}(1,w,\dots, w^{2n-1}).
	\eea
	Denote $(\frak{sl}_{2n})_j=\{ x \in \frak{sl}_{2n}\ | \ Cx=w^jx\}$.
	The twisted Lie subalgebra $L(\frak{sl}_{2n}, C)\subset\frak{sl}_{2n}[\xi,\xi^{-1}]$ is the subalgebra
	\bea
	L(\frak{sl}_{2n}, C) = \oplus_{j \in \mathbb{Z}}\, \xi^{j} \otimes (\frak{sl}_{2n})_{j\, \on{mod}\, 2n } .
	\eea
	The isomorphism $\tau_C: \gA \to L(\frak{sl}_{2n}, C)$ is defined by the formula, 
	\bea
	&&E_0\mapsto \xi\otimes e_{1,2n},
	\qquad \phantom{aaaaaa}
	E_i\mapsto \xi\otimes e_{i+1,i},
	\\
	&&
	F_0\mapsto \xi^{-1}\otimes e_{2n,1},
	\qquad \phantom{aaaa}
	F_i\mapsto \xi^{-1}\otimes e_{i,i+1},
	\\
	&&
	H_0\mapsto 1\otimes (e_{1,1}-e_{2n,2n}),
	\quad
	H_i\mapsto 1\otimes (-e_{i,i}+e_{i+1,i+1}),
	\eea
for $i = 1,\dots, 2n-1$.
	Under this isomorphism we have $\gA^j=\xi^j\otimes (\slt)_j$.
	
	\medskip
	
	The standard isomorphism $\tau_0: \gA \to \frak{sl}_{2n}[\la,\la^{-1}]$ is defined by the formula, 	
\bea
	&&
	E_0\mapsto \la \otimes e_{1,2n},
	\qquad\phantom{aaaaaaaa}
	E_i\mapsto 1 \otimes e_{i+1,i},
	\\
	&&
	F_0\mapsto \lambda^{-1}\otimes e_{2n,1},
	\qquad\phantom{aaaaaa}
	F_i\mapsto1 \otimes e_{i,i+1},
	\\
	&&
	H_0\mapsto 1\otimes (e_{1,1}-e_{2n,2n}),
	\qquad
	H_i\mapsto 1\otimes (-e_{i,i}+e_{i+1,i+1}),
	\eea
for $i = 1,\dots, 2n-1$.

	\medskip
	
	\subsection{Element $\La^{(1)}$}
	Denote by $\La^{(1)}$ the element $\sum_{j=0}^{2n-1}{E_{j}}\in \gA$. 
	Then $\zA=\{x\in\gA\ | \ [\La^{(1)},x]=0\}$ is an abelian Lie subalgebra of $\g(A_{2n-1}^{(1)})$. Denote $\zA^j = \zA \cap \gA^j$, then
	$\zA =\oplus_{j\in\Z}\,\zA^j $. 
	We have $\dim \zA^j =1$ if $j \neq 0$ mod $2n$ and $\dim \zA^j=0$ otherwise.
	
	Let $\gA$ be realized as $L(\slt,C)$ and written out as $2n \times 2n$-matrices. 
For $m\in \Z$ and $1 \leq j < 2n$, we introduce the element
	\bea
	A_{(2n)m+j}=\xi^{(2n)m+j} \otimes \left(\begin{matrix}
		0 & I_{j} \\
		I_{2n-j} & 0 \\
	\end{matrix}\right) \quad \in \quad L(\slt,C),
	\eea
	where $I_{j}$ is the $j \times j$ identity matrix. We have $A_{(2n)m+j} = (A_1)^{(2n)m+j}$.
	
	If $\gA$ is realized as $L(\slt,\si_{0})$, we introduce the element
	\bea
	B_{(2n)m+j}= \left(\begin{matrix}
		0 & \la^{m+1} \otimes I_{j} \\
		\la^{m}\otimes I_{2n-j} & 0 \\
	\end{matrix}\right) \quad \in\quad L(\slt,\si_{0}).
	\eea
	We have $B_{(2n)m+j} = (B_1)^{(2n)m+j}$.		

	\begin{lem}
		For any $m\in \Z$, $1 \leq j < 2n$, the elements $		(\tau_C)^{-1}(A_{(2n)m+j})$,
\\		$		(\tau_0)^{-1}(B_{(2n)m+j})$  		of $\zA^{(2n)m+j}$
		are equal.
		\qed
	\end{lem}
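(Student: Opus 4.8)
The plan is to realize the composite isomorphism $\tau_C\circ\tau_0^{-1}$ explicitly and to exploit that, unlike $\tau_C$ and $\tau_0$ taken separately, it respects the \emph{associative} product and hence commutes with taking powers. First I would record the two base identities. Summing the $\tau_C$-images of the canonical generators $E_0,\dots,E_{2n-1}$ gives $\tau_C(\La^{(1)})=\xi\otimes\big(e_{1,2n}+\sum_{i=1}^{2n-1}e_{i+1,i}\big)=A_1$, while summing their $\tau_0$-images gives $\tau_0(\La^{(1)})=\la\otimes e_{1,2n}+\sum_{i=1}^{2n-1}1\otimes e_{i+1,i}=B_1$. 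Thus $A_1=\tau_C(\La^{(1)})$ and $B_1=\tau_0(\La^{(1)})$, so the asserted equality already holds in the base case $m=0,\ j=1$.

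Next I would pin down $\tau_C\circ\tau_0^{-1}$ concretely. Set $D(\xi)=\on{diag}(1,\xi,\dots,\xi^{2n-1})$ and define the map $\Psi\big(X(\la)\big)=D(\xi)\,X(\xi^{2n})\,D(\xi)^{-1}$, namely the substitution $\la=\xi^{2n}$ followed by conjugation by $D(\xi)$. Since conjugation by $D(\xi)$ multiplies the $(p,q)$ matrix entry by $\xi^{\,p-q}$, a direct check on generators gives $\Psi(\tau_0(E_i))=\tau_C(E_i)$, $\Psi(\tau_0(F_i))=\tau_C(F_i)$, and $\Psi(\tau_0(H_i))=\tau_C(H_i)$ for all $i$: for $i\ge 1$ the entry $e_{i+1,i}$ picks up the factor $\xi^{(i+1)-i}=\xi$, while for the wrap-around generator $E_0\mapsto\la\otimes e_{1,2n}$ the factor $\xi^{\,1-2n}$ combines with $\la=\xi^{2n}$ to give exactly $\xi$, and similarly $F_0\mapsto\la^{-1}\otimes e_{2n,1}$ produces $\xi^{-1}\otimes e_{2n,1}$. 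Because the $\tau_0$-images of the canonical generators generate the target of $\tau_0$ as a Lie algebra, and both $\Psi$ and $\tau_C\circ\tau_0^{-1}$ are Lie homomorphisms agreeing on these generators, they coincide: $\Psi=\tau_C\circ\tau_0^{-1}$.

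The key point is that $\Psi$, being assembled from a variable substitution and a conjugation, is an associative algebra homomorphism, so $\Psi\big((B_1)^{N}\big)=\big(\Psi(B_1)\big)^{N}$ for every $N\in\Z$. Combining $\Psi(B_1)=\tau_C(\La^{(1)})=A_1$ with the relations $B_{(2n)m+j}=(B_1)^{(2n)m+j}$ and $A_{(2n)m+j}=(A_1)^{(2n)m+j}$ from the text, I get
\[
\tau_C\big(\tau_0^{-1}(B_{(2n)m+j})\big)=\Psi\big((B_1)^{(2n)m+j}\big)=(A_1)^{(2n)m+j}=A_{(2n)m+j},
\]
which is precisely the asserted identity $\tau_0^{-1}(B_{(2n)m+j})=\tau_C^{-1}(A_{(2n)m+j})$. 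I expect the only genuine obstacle to be the entry-by-entry check that $\Psi$ matches $\tau_C\circ\tau_0^{-1}$ on the generators, i.e. that the chosen diagonal $D(\xi)$ and the exponent of $\la$ conspire correctly on the wrap-around generators $E_0,F_0$; everything else is formal. As a consistency check one may note that both $\tau_C^{-1}(A_{(2n)m+j})$ and $\tau_0^{-1}(B_{(2n)m+j})$ commute with $\La^{(1)}$ and sit in $\gA^{(2n)m+j}$, hence in the one-dimensional space $\zA^{(2n)m+j}$ (here $1\le j<2n$), so equality would in any case only need to be verified on a single nonzero matrix entry.
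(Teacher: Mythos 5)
Your proof is correct and is essentially the intended argument: the map $\Psi$ you construct is exactly the isomorphism $\iota:\slt[\la,\la^{-1}]\to L(\slt,C)$, $\la^m\otimes e_{k,l}\mapsto \xi^{(2n)m+k-l}\otimes e_{k,l}$, which the paper itself introduces in the proof of the lemma immediately following this one, and your observation that it intertwines $\tau_0$ with $\tau_C$ on generators and respects the associative product (hence powers of $B_1$ and $A_1$) is the natural way to conclude. The paper states this lemma with no proof, so your write-up just supplies the omitted details, all of which check out.
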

	Denote by $\La^{(1)}_{(2n)m+j}$ the  elements $(\tau_C)^{-1}(A_{(2n)m+j})$ and $(\tau_0)^{-1}(B_{(2n)m+j})$ of $\gA$.
	Notice that $\La^{(1)}_1=\sum_{i=0}^{2n-1}{E_{i}}=\La^{(1)}$.
	For any $m\in\Z,1\leq j< 2n,$  the element  $\La^{(1)}_{(2n)m+j}$  generates $\zA^{(2n)m+j} $.

	\medskip
	Let $T = \sum_{j=-\infty}^m T_j$ be a formal series with $T_j \in  \gA^j$.
	Denote $T^+ = \sum_{j=0}^m T_j,$\ {}\  $ T^- = \sum_{j<0} T_j$.
	Let $\gA$ be realized as $ \slt[\la,\la^{-1}]$. Consider $\La^{(1)}= B_{1}$ as a
	$2n\times 2n$ matrix depending on the parameter $\la$.
	By \cite[Lemma 3.4]{DS}, we may represent $T$ uniquely in the form $T = \sum_{j=-\infty}^k b_j\,(\La^{(1)})^j$,\  $b_j\in \Dia$, where
	$\Dia\subset\frak{gl}_{2n}$ is the space of diagonal $2n\times 2n$ matrices.
	Denote $(T)_{\La^{(1)}}^+ = \sum_{j=0}^k b_j\,(\La^{(1)})^j,$\ {}\  $ (T)_{\La^{(1)}}^- = \sum_{j<0} b_j\,(\La^{(1)})^j$.
	
	\begin{lem}
		We have $(T)_{\La^{(1)}}^+= T^+$, $(T)_{\La^{(1)}}^-=T^-$, $b_0=T^0$.
	\end{lem}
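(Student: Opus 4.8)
We have the affine Lie algebra $\gA$ graded by the standard grading, realized two ways—via $\tau_C$ inside $L(\slt,C)$ and via $\tau_0$ inside $\slt[\la,\la^{-1}]$. The element $\La^{(1)}=B_1$ is being used as a matrix depending on $\la$, and an arbitrary formal series $T=\sum_{j\le m}T_j$ with $T_j\in\gA^j$ has been rewritten, by invoking \cite[Lemma 3.4]{DS}, in the form $T=\sum_{j\le k}b_j(\La^{(1)})^j$ with $b_j\in\Dia$ diagonal. The claim is that the two splittings into nonnegative and negative parts agree: $(T)^+_{\La^{(1)}}=T^+$, $(T)^-_{\La^{(1)}}=T^-$, and moreover the $j=0$ term $b_0$ equals the degree-zero component $T^0$.

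**The key observation.** The entire statement reduces to a single fact about the grading: the matrix $(\La^{(1)})^j=B_j$ is \emph{homogeneous of degree $j$} with respect to the standard grading. Let me verify the plan for this. Under $\tau_0$ the grading on $\slt[\la,\la^{-1}]$ assigns degree according to the Coxeter/principal grading, and from the formula for $B_{(2n)m+j}$ one reads off that the block entries carry powers $\la^{m+1}$ and $\la^m$ arranged so that $B_{(2n)m+j}$ lies entirely in $\gA^{(2n)m+j}$. More simply, $\La^{(1)}\in\gA^1$ by construction, so $(\La^{(1)})^j\in\gA^j$. A diagonal matrix $b_j\in\Dia$ commutes with the grading in the sense that multiplying by $b_j$ does not change the $\la$-degree: $b_j$ is a constant matrix lying in $\gA^0=\h$ (diagonal traceless matrices form the Cartan). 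Hence each summand $b_j(\La^{(1)})^j$ is homogeneous of degree exactly $j$, living in $\gA^j$.

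**Assembling the proof.** First I would record that $b_j(\La^{(1)})^j\in\gA^j$ for each $j$, using $\La^{(1)}\in\gA^1$ and $b_j\in\gA^0$ together with $[\gA^0,\gA^j]\subseteq\gA^j$ (equivalently, that left multiplication by a diagonal matrix preserves $\la$-homogeneity). Then, since $\gA=\oplus_{j}\gA^j$ is a direct sum decomposition, the expansion $T=\sum_j b_j(\La^{(1)})^j$ is \emph{the} decomposition of $T$ into its homogeneous components. Comparing with $T=\sum_j T_j$ and using uniqueness of homogeneous components in a graded vector space, I conclude $T_j=b_j(\La^{(1)})^j$ for every $j$. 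Summing over $j\ge 0$ gives $(T)^+_{\La^{(1)}}=\sum_{j\ge0}b_j(\La^{(1)})^j=\sum_{j\ge0}T_j=T^+$; summing over $j<0$ gives $(T)^-_{\La^{(1)}}=T^-$; and the single term $j=0$ gives $b_0(\La^{(1)})^0=b_0=T_0=T^0$.

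**Where the real content sits.** The only step carrying genuine content is the homogeneity claim $b_j(\La^{(1)})^j\in\gA^j$, and within it the fact that diagonal matrices sit in degree zero while $(\La^{(1)})^j$ sits in degree $j$. This is immediate once one uses the realization in which $\La^{(1)}=B_1$ and reads the grading off the explicit block form of $B_{(2n)m+j}$, but it is the hinge of the argument: everything else is the triviality that a graded decomposition is unique. I expect no genuine obstacle—this lemma is essentially a repackaging of \cite[Lemma 3.4]{DS} translated into grading language, and the main care needed is only to confirm that multiplication by the diagonal coefficient $b_j$ does not shift the degree, which follows because $\Dia\cap\slt\subset\h=\gA^0$.
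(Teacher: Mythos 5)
Your proof is correct and follows essentially the same route as the paper: both rest on the single fact that each summand $b_j(\La^{(1)})^j$ is homogeneous of degree $j$, after which uniqueness of the graded decomposition finishes the argument. The paper verifies this homogeneity by explicitly computing $\iota(b_j(\La^{(1)})^j)$ for $j=0,\pm1$ under the isomorphism $\la^m\otimes e_{k,l}\mapsto\xi^{(2n)m+k-l}\otimes e_{k,l}$ and saying the general case is similar, whereas you argue it by degree additivity; the only imprecision is that the relevant fact is gradedness of the associative (matrix) product, not the Lie-bracket relation $[\gA^0,\gA^j]\subseteq\gA^j$ you first cite, though your parenthetical restatement gets this right.
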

	\begin{proof}
		The isomorphism $ \iota : \slt[\la,\la^{-1}] \to L(\frak{sl}_{2n}, C)$ is given by the formula
		$\la^m\otimes e_{k,l}\mapsto \xi^{(2n)m+k-l}\ox e_{k,l}$.
		We have $\iota(b_0) = \iota(1\ox(b_0^1 e_{1,1}+ \dots +b_0^{2n} e_{2n,2n}))=
1\otimes (b_0^1 e_{1,1}+\dots +b_0^{2n} e_{2n,2n})\in \gA^0$,
		$\iota(b_1\La^{(1)}) = \iota((b_1^1 e_{1,1}+ b_1^2 e_{2,2}+\dots +b_1^{2n}
 e_{2n,2n})(e_{2,1}+\dots + e_{2n-1,2n-2}+e_{2n,2n-1}+\la e_{1,2n}))
		=\iota(b_1^1 \la e_{1,2n}+b_1^2 e_{2,1}+\dots +b_1^{2n} e_{2n,2n-1})=\xi\otimes (b_1^1 e_{1,2n}+ b_1^2 e_{2,1}+\dots +b_1^{2n} e_{2n,2n-1})\in\gA^1$,
		$\iota(b_{-1}(\La^{(1)})^{-1}) = \iota((b_{-1}^1 e_{1,1}+ b_{-1}^2 e_{2,2}+\dots+b_{-1}^{2n} e_{2n,2n})(e_{1,2}+\dots +e_{2n-1,2n}+\la^{-1} e_{2n,1}))
		=\iota(b_{-1}^1 e_{1,2}+ \dots + b_{-1}^{2n-1} e_{2n-1,2n}+b_{-1}^{2n} \la^{-1}e_{2n,1})=\xi^{-1}\otimes b_{-1}^1 e_{1,2}+\dots + b_{-1}^{2n-1} e_{2n-1,2n}+b_{-1}^{2n} e_{2n,1})\in\gA^{-1}$.
		Similarly one checks that $\iota(b_j\,(\La^{(1)})^j)\in\gA^j$ for any $j$.
	\end{proof}

We have ${(\La^{(1)})}^{-1}= \sum_{i=1}^{2n-1}e_{i,i+1}+\la^{-1}e_{2n,1}$,
and
	\bea
	E_{0}= \La^{(1)}e_{2n,2n}, \qquad  E_{i} = \La^{(1)}e_{i,i},\qquad
	F_{0}=e_{2n,2n} {(\La^{(1)})}^{-1},\qquad 
F_{i} = e_{i,i}{(\La^{(1)})}^{-1},
	\eea
for $i = 1, \dots, 2n-1$.

\begin{lem}
		\label{lem exp}
Consider the elements $F_0,F_{i}+F_{2n-i}$ for $i = 1, \dots, n-1$ as  $2n\times 2n$ matrices.
		Let $g \in \C$. Then
		\bean
		\label{formula exp}
		e^{gF_0} 
&=&
 1\ + \ g\,e_{2n,2n}(\La^{(1)})^{-1},
		\\
\notag
	e^{g(F_i+F_{2n-i})} 
&=& 
1 \ +\  g\,(e_{i,i}+e_{2n-i,2n-i})(\La^{(1)})^{-1},
		\\
\notag
		e^{gF_n} 
&=&
 1 \ +\ g\, e_{n,n} (\La^{(1)})^{-1} .
	\eean
		\qed
	\end{lem}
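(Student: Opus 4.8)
The plan is to verify the three exponential formulas directly by exploiting the nilpotency of the relevant matrices, so that each exponential series truncates after the linear term. The key observation is that $F_0 = e_{2n,2n}(\La^{(1)})^{-1}$, and similarly for the other generators, and that these matrices square to zero. First I would record from the preceding displayed formulas that $(\La^{(1)})^{-1}=\sum_{i=1}^{2n-1}e_{i,i+1}+\la^{-1}e_{2n,1}$, so that $F_0=e_{2n,2n}(\La^{(1)})^{-1}=\la^{-1}e_{2n,1}$. Then $(F_0)^2=\la^{-2}e_{2n,1}e_{2n,1}=0$ since $e_{2n,1}e_{2n,1}=0$ (the column index $1$ of the first factor does not match the row index $2n$ of the second). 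Hence $e^{gF_0}=1+gF_0=1+g\,e_{2n,2n}(\La^{(1)})^{-1}$, which is the first formula.

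For the third formula I would argue identically: $F_n=e_{n,n}(\La^{(1)})^{-1}=e_{n,n+1}$, and $(F_n)^2=e_{n,n+1}e_{n,n+1}=0$, so $e^{gF_n}=1+g\,e_{n,n}(\La^{(1)})^{-1}$. The middle formula requires checking that $F_i+F_{2n-i}=(e_{i,i}+e_{2n-i,2n-i})(\La^{(1)})^{-1}$ is also square-zero for $1\le i\le n-1$. Using $F_i=e_{i,i}(\La^{(1)})^{-1}=e_{i,i+1}$ and $F_{2n-i}=e_{2n-i,2n-i}(\La^{(1)})^{-1}=e_{2n-i,2n-i+1}$, I would compute $(F_i+F_{2n-i})^2=e_{i,i+1}e_{i,i+1}+e_{i,i+1}e_{2n-i,2n-i+1}+e_{2n-i,2n-i+1}e_{i,i+1}+e_{2n-i,2n-i+1}e_{2n-i,2n-i+1}$ and observe that every product vanishes. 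The diagonal products vanish as before, and the cross terms $e_{i,i+1}e_{2n-i,2n-i+1}$ and $e_{2n-i,2n-i+1}e_{i,i+1}$ vanish because the relevant column and row indices fail to coincide: for $1\le i\le n-1$ one has $i+1\ne 2n-i$ and $2n-i+1\ne i$. Thus the exponential again truncates and gives $e^{g(F_i+F_{2n-i})}=1+g\,(e_{i,i}+e_{2n-i,2n-i})(\La^{(1)})^{-1}$.

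The only point requiring genuine care is the index bookkeeping in the middle case, where I must confirm that the two elementary matrices $e_{i,i+1}$ and $e_{2n-i,2n-i+1}$ sit on distinct superdiagonal positions and that their off-diagonal products annihilate each other. This is the main potential source of error, but it reduces to the elementary matrix unit relation $e_{a,b}e_{c,d}=\delta_{b,c}e_{a,d}$ and the inequalities on indices noted above; no further structural input is needed. Once the three square-zero claims are established, all three identities follow at once from the finite expansion $e^{gM}=1+gM$ valid whenever $M^2=0$.
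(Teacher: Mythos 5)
Your proof is correct: each of $F_0=\la^{-1}e_{2n,1}$, $F_n=e_{n,n+1}$, and $F_i+F_{2n-i}=e_{i,i+1}+e_{2n-i,2n-i+1}$ squares to zero (the index checks $i+1\ne 2n-i$ and $2n-i+1\ne i$ hold because $2i=2n\mp1$ has no integer solution), so the exponential series truncates. The paper states this lemma without proof as immediate, and your nilpotency computation is exactly the intended verification.
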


	\begin{lem}
		\label{lem lambda}  We have
		\bean
		\label{formula La}
		e_{i+1,i+1}\La^{(1)} = \La^{(1)} e_{i,i}, \qquad
		e_{i,i}(\La^{(1)})^{-1} = (\La^{(1)})^{-1}e_{i+1,i+1},
		\eean
		for all $i$, where we set $e_{2n+1,2n+1}=e_{1,1}$.
		\qed
	\end{lem}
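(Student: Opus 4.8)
The plan is to verify Lemma \ref{lem lambda} by direct matrix computation, since both identities are statements about explicit $2n\times 2n$ matrices. Recall from the excerpt that, under the realization of $\gA$ as $\slt[\la,\la^{-1}]$, the element $\La^{(1)}=B_1$ acts as the matrix
\bea
\La^{(1)} = e_{2,1}+e_{3,2}+\dots+e_{2n,2n-1}+\la\,e_{1,2n},
\eea
that is, the cyclic shift sending basis vector $i$ to $i+1$ (indices mod $2n$, with a factor $\la$ wrapping from the last column back to the first row). Its inverse was computed just above the statement as $(\La^{(1)})^{-1}=\sum_{i=1}^{2n-1}e_{i,i+1}+\la^{-1}e_{2n,1}$.

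First I would prove the first identity $e_{i+1,i+1}\La^{(1)}=\La^{(1)}e_{i,i}$ for $1\le i\le 2n-1$. Using $e_{k,k}e_{p,q}=\delta_{k,p}\,e_{p,q}$ and $e_{p,q}e_{k,k}=\delta_{q,k}\,e_{p,q}$, multiplying $\La^{(1)}$ on the left by $e_{i+1,i+1}$ selects exactly the row-$(i+1)$ entries of $\La^{(1)}$, which for $1\le i\le 2n-1$ is the single term $e_{i+1,i}$; multiplying $\La^{(1)}$ on the right by $e_{i,i}$ selects the column-$i$ entries, again the single term $e_{i+1,i}$. So both sides equal $e_{i+1,i}$ and agree. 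The only subtle index is $i=2n$: with the convention $e_{2n+1,2n+1}=e_{1,1}$, the left side becomes $e_{1,1}\La^{(1)}=\la\,e_{1,2n}$, while the right side $\La^{(1)}e_{2n,2n}=\la\,e_{1,2n}$ as well, so the wrap-around case is exactly what the stated convention is designed to handle. The second identity $e_{i,i}(\La^{(1)})^{-1}=(\La^{(1)})^{-1}e_{i+1,i+1}$ is checked the same way: left multiplication by $e_{i,i}$ picks out row $i$ of $(\La^{(1)})^{-1}$, namely $e_{i,i+1}$ (or $\la^{-1}e_{2n,1}$ when $i=2n$), and right multiplication by $e_{i+1,i+1}$ picks out column $i+1$, giving the same term.

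The whole argument is a routine bookkeeping of which single matrix unit survives each one-sided multiplication, and there is no real obstacle; the only thing one must not overlook is the cyclic wrap at the boundary, which is precisely why the lemma stipulates $e_{2n+1,2n+1}=e_{1,1}$. An equivalent conceptual phrasing, which I would mention as a remark, is that $\La^{(1)}$ implements the cyclic permutation $i\mapsto i+1$ on the diagonal idempotents by conjugation, i.e. $\La^{(1)}e_{i,i}(\La^{(1)})^{-1}=e_{i+1,i+1}$; the two displayed formulas are just the two ways of rewriting this single conjugation relation, and the $\la$-factors cancel in the conjugated form, which explains why no power of $\la$ appears in \Ref{formula La}.
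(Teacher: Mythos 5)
Your verification is correct and is exactly the routine matrix-unit computation the paper has in mind: the paper states this lemma with no proof at all (just \qed), treating it as immediate from the explicit forms $\La^{(1)}=\sum_{i=1}^{2n-1}e_{i+1,i}+\la e_{1,2n}$ and $(\La^{(1)})^{-1}=\sum_{i=1}^{2n-1}e_{i,i+1}+\la^{-1}e_{2n,1}$, and your bookkeeping of the surviving matrix unit on each side, including the wrap-around case $i=2n$, fills in precisely that omitted check. Your closing remark that both identities are restatements of the conjugation relation $\La^{(1)}e_{i,i}(\La^{(1)})^{-1}=e_{i+1,i+1}$ is also accurate and is a helpful way to see why the lemma is used repeatedly later (e.g.\ in the proof of Proposition \ref{prop77}).
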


	\section{Kac-Moody algebra of type $C_{n}^{(1)}$}
	\label{C1DefSec}
	In this section we follow \cite[Section 5]{DS}.

	\subsection{Definition}

	For $n\geq 2$, consider the $(n+1) \times (n+1) $ Cartan matrix of type $\AT$,
	\bea
	\AT=\left(
	\begin{matrix}
		a_{0,0} & a_{0,1}  & \dots & a_{0,n} \\
		\dots  & \dots  & \dots  & \dots \\
		\dots  & \dots  & \dots  & \dots \\
		a_{n,0}  & a_{n,1}  & \dots & a_{n,n}
	\end{matrix}
	\right)=\left(
	\begin{matrix}
		\phantom{a}2 & -1  & \phantom{a}0 & \dots & \dots & \dots & \phantom{a}0
\\
		-2  & \phantom{a}2  & -1  & \phantom{a}0 & \dots & \dots &\dots &\\
		\phantom{a}0 & -1  & \phantom{a}2 & -1 & \dots & \dots & \dots \\
		\dots & \phantom{a}0 & -1 & \dots & \dots & \dots & \dots \\
		\dots & \dots & \dots & \dots &\phantom{a}2 & -1 & \phantom{a}0 \\
		\dots & \dots & \dots & \dots & -1 & \phantom{a}2 & -2 \\
		\phantom{a}0 & \dots & \dots & \dots & \phantom{a}0 & -1 &\phantom{a} 2
	\end{matrix}
	\right).
	\eea
	For example, for $n=2$, we have
	\bea
	\AT= \left(
	\begin{matrix}
		\phantom{a}2 & -1  & \phantom{a}0 \\
		-2  & \phantom{a}2 & -2 \\
		\phantom{a}0 &  -1 & \phantom{a}2
	\end{matrix}
	\right).
	\eea
	
		The Kac-Moody algebra $\g(\AT)$ {\it of type} $\AT$ is the Lie algebra with {\it canonical generators} $e_i,h_i,f_i\in\gAA$, $i=0,\dots,n$,
	subject to the relations
	\bea
	[e_i,f_j]
&=&
\delta_{i,j}h_i,
	\qquad \phantom{a}
	[h_i, e_j] = a_{i,j} e_j ,
	\qquad
	[h_i,  f_j] = - a_{i,j} f_j,
	\\
	(\on{ad} e_i)^{1-a_{i,j}} e_j
&=& 0,
	\quad
	(\on{ad}  f_i)^{1-a_{i,j}} f_j=0,
	\qquad \phantom{aaa}
	[h_i,h_j] = 0,
	\\
&&
\phantom{aaaa}
	h_{0}+\dots+h_{n}=0,
	\eea
see \cite[Section 5]{DS}.
	
	The Lie algebra $\gAA$ is graded with respect to the {\it standard grading}, $\deg e_i=1$, $\deg f_i=-1$,\ $ i=0,\dots,n $.
	Let $\gAA^j=\{x\in\gAA\ |\ \deg x=j\}$, then $\gAA = \oplus_{j\in\Z}\,\gAA^j$.

	Notice that $\gAA^0$ is the n-dimensional space generated by the $h_i$.
	Denote $\h=\gAA^0$. Introduce elements $\al_j$ of the dual space $\h^*$ by the conditions
	$\langle \al_j, h_i\rangle =a_{i,j}$ for $i,j=0,\dots,n$.

	\subsection{Realizations of $\gAA$}
	\label{Real gAA}

Recall the twisted Lie subalgebra $L(\frak{sl}_{2n}, C)$.
	We have an embedding $\tilde \tau_C: \gAA \hookrightarrow
L(\frak{sl}_{2n}, C)$  defined by the formula
	\bea
	&&
	e_0\mapsto \xi\otimes e_{1,2n},
	\qquad \phantom{aaaaaaaaaaaaaaaaa}
	e_n\mapsto \xi\otimes e_{n+1,n}
	\\
	&&
	f_0\mapsto \xi^{-1}\otimes e_{2n,1},
	\qquad  \phantom{aaaaaaaaaaaaaaa}
	f_n\mapsto \xi^{-1}\otimes e_{n,n+1}
	\\
	&&
	e_i\mapsto \xi\otimes (e_{i+1,i}+e_{2n+1-i,2n-i}),
	\qquad \phantom{aaaa}
	f_i\mapsto \xi^{-1}\otimes (e_{i,i+1}+e_{2n-i,2n+1-i}),
	\\
	&&
	h_0\mapsto 1\otimes (e_{1,1}-e_{2n,2n}),
	\qquad \phantom{aaaaaaaaa}
	h_n\mapsto 1\otimes (-e_{n,n}+e_{n+1,n+1}),
	\\
	&&
	h_i\mapsto 1\otimes (-e_{i,i}+e_{i+1,i+1}-e_{2n-i,2n-i}+e_{2n+1-i,2n+1-i}),
	\eea
for $i=1,\dots,n-1$.
	Under this embedding we have $\gAA^j\subset \xi^j\otimes (\slt)_j$.

We also have the standard embedding $\tilde \tau_0: \gAA \hookrightarrow
\slt[\la,\la^{-1}]$  defined by the formula
	\bea
	&&
	e_0\mapsto \la \otimes e_{1,2n},
	\qquad\phantom{aaaaaaaa}
	e_i\mapsto 1 \otimes (e_{i+1,i}+e_{2n+1-i,2n-i}),
	\\
	&&
	f_0\mapsto \lambda^{-1}\otimes e_{2n,1},
	\qquad\phantom{aaaaaa}
	f_i\mapsto1 \otimes (e_{i,i+1}+e_{2n-i,2n+1-i}),
\\
	&&
	e_n \mapsto  1\otimes e_{n+1,n},
	\qquad\phantom{aaaaaa}
	f_n \mapsto  1\otimes e_{n,n+1},
	\\
	&&
	h_0\mapsto 1\otimes (e_{1,1}-e_{2n,2n}),
	\qquad
	h_n \mapsto  1\otimes (-e_{n,n}+e_{n+1,n+1}),
	\\
	&&
	h_i\mapsto 1\otimes (-e_{i,i}+e_{i+1,i+1}-e_{2n-i,2n-i}+e_{2n+1-i,2n+1-i}),
	\eea
	for $i=1,\dots,n-1$.

	\subsection{Element $\La^{(2)}$}

	Denote by $\La^{(2)}$ the element $\sum_{i=0}^{n}{e_{i}}\in \gAA$. Then $\zAA=\{x\in\gAA\ | \ [\La^{(2)},x]=0\}$ is an abelian Lie subalgebra of $\gAA$. Denote $\z^j (C^{(1)}_{n}) = \z (C^{(1)}_{n})\cap \gAA^j$, then
	$\zAA=\oplus_{j\in\Z}\,\zAA^j$. 
	We have $\dim \zAA^j=0$ if $j$ is even, and $\dim \zAA^{j}= 1$ otherwise.

	If $\gAA$ is realized as a subalgebra of $L(\slt,C)$ and written out as $2n \times 2n$ matrices, then for odd $j$, $1 \leq j < 2n$, we introduce the element
	\bea
	A_{(2n)m+j}=\xi^{(2n)m+j} \otimes \left(\begin{matrix}
		0 & I_{j} \\
		I_{2n-j} & 0 \\
	\end{matrix}\right),
	\eea
	where $I_{j}$ is the $j \times j$ identity matrix. We have $A_{(2n)m+ j} = (A_{1})^{(2n)m + j}$.
	\medskip
	If $\gAA$ is realized as a subalgebra of $\frak{sl}_{2n}[\la,\la^{-1}]$ and written out as $2n \times 2n$ matrices, 
	then for odd $j$, $1 \leq j < 2n$, we introduce the element
	\bea
	B_{(2n)m+j}=\left(\begin{matrix}
		0 & \la^{m+1} \otimes I_{j} \\
		\la^{m} \otimes I_{2n-j} & 0 \\
			\end{matrix}\right).
	\eea
	We have $B_{(2n)m+ j} = (B_{1})^{(2n)m + j}$.

	\begin{lem}
		For any $m\in \Z$, odd $j$, $1 \leq j < 2n$, the elements
		\bea
		(\tilde \tau_C)^{-1}(A_{(2n)m+ j}),
		\qquad
		(\tilde\tau_0)^{-1}(B_{(2n)m+ j}),
		\eea
		of $\zAA^{(2n)m+ j}$
		are equal.
		\qed
	\end{lem}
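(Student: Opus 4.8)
The plan is to reuse the isomorphism $\iota:\slt[\la,\la^{-1}]\to L(\slt,C)$, $\la^a\ox e_{k,l}\mapsto \xi^{(2n)a+k-l}\ox e_{k,l}$, introduced in Section \ref{sec KM1}, and to exhibit it as an intertwiner of the two realizations of $\gAA$. Concretely, I would establish the two facts $\iota\circ\tilde\tau_0=\tilde\tau_C$ (an identity of embeddings $\gAA\hookrightarrow L(\slt,C)$) and $\iota(B_{(2n)m+j})=A_{(2n)m+j}$. Granting these, the claimed equality is purely formal: since $\tilde\tau_C=\iota\circ\tilde\tau_0$ gives $(\tilde\tau_C)^{-1}=(\tilde\tau_0)^{-1}\circ\iota^{-1}$ on the image of $\tilde\tau_C$, we obtain
\bea
(\tilde\tau_C)^{-1}(A_{(2n)m+j})=(\tilde\tau_C)^{-1}\iota(B_{(2n)m+j})=(\tilde\tau_0)^{-1}(B_{(2n)m+j}).
\eea

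To prove $\iota\circ\tilde\tau_0=\tilde\tau_C$, note that both $\tilde\tau_0,\tilde\tau_C$ are Lie algebra homomorphisms and $\iota$ is a Lie algebra isomorphism, so it suffices to compare the two sides on the canonical generators $e_i,f_i,h_i$. Each comparison is a one-line check using only the exponent $k-l$ in the formula for $\iota$. For example, $\iota(\tilde\tau_0(e_0))=\iota(\la\ox e_{1,2n})=\xi^{2n+1-2n}\ox e_{1,2n}=\xi\ox e_{1,2n}=\tilde\tau_C(e_0)$; for $1\le i\le n-1$ every matrix unit $e_{p,q}$ occurring in $\tilde\tau_0(e_i)$ has $p-q=1$, so $\iota$ multiplies it by $\xi$ and $\iota(\tilde\tau_0(e_i))=\xi\ox(e_{i+1,i}+e_{2n+1-i,2n-i})=\tilde\tau_C(e_i)$. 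The generators $e_n,f_0,f_i,f_n$ are handled identically (the $f$'s producing $\xi^{-1}$), and every $h_i$ is diagonal, hence fixed by $\iota$ since then $k-l=0$. Thus the two embeddings agree on generators and therefore coincide.

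For the matrix identity I would expand $B_{(2n)m+j}$ into weighted matrix units: its upper-right block $\la^{m+1}\ox I_j$ contributes $\la^{m+1}\ox e_{i,2n-j+i}$ for $i=1,\dots,j$, and its lower-left block $\la^{m}\ox I_{2n-j}$ contributes $\la^{m}\ox e_{k,k-j}$ for $k=j+1,\dots,2n$. Applying $\iota$, the first family becomes $\xi^{(2n)(m+1)+i-(2n-j+i)}\ox e_{i,2n-j+i}=\xi^{(2n)m+j}\ox e_{i,2n-j+i}$ and the second becomes $\xi^{(2n)m+k-(k-j)}\ox e_{k,k-j}=\xi^{(2n)m+j}\ox e_{k,k-j}$. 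Both blocks are thereby shifted into the single homogeneous degree $(2n)m+j$, and their sum is exactly $\xi^{(2n)m+j}\ox\left(\begin{smallmatrix}0&I_j\\ I_{2n-j}&0\end{smallmatrix}\right)=A_{(2n)m+j}$.

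All of the above is routine index bookkeeping. The one genuinely structural point — and what I expect to be the only real obstacle — is the implicit assertion that the two preimages exist, i.e. that $A_{(2n)m+j}$ lies in $\tilde\tau_C(\gAA)$ (equivalently, via $\iota$, that $B_{(2n)m+j}$ lies in $\tilde\tau_0(\gAA)$); note this is a stronger condition than for the $\At$ case, where $\tau_C$ is onto $L(\slt,C)$. This is exactly where the hypothesis that $j$ be odd enters: $\gAA$ is realized inside $L(\slt,C)$ as the fixed subalgebra of the folding involution of $\At$, and the homogeneous element $A_{(2n)m+j}$ is invariant under that involution precisely when $j$ is odd, in agreement with $\dim\zAA^{j}=1$ for odd $j$ and $0$ for even $j$. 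I would verify this invariance explicitly; once it is in hand, the equality of the preimages follows formally from the intertwining property of $\iota$ established above.
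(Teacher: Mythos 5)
Your proposal is correct. The paper states this lemma with no proof at all (it is marked \qed as evident), so there is nothing to compare against; your argument supplies exactly the verification one would want: the intertwining identity $\iota\circ\tilde\tau_0=\tilde\tau_C$ checked on canonical generators, the matrix-unit computation showing $\iota(B_{(2n)m+j})=A_{(2n)m+j}$, and the formal conclusion. You are also right to flag the only non-routine point, namely that the preimages exist, i.e.\ that $A_{(2n)m+j}$ lies in $\tilde\tau_C(\gAA)$; this follows either from your folding-involution argument (the involution acts on $(A_1)^k$ by the sign $(-1)^{k-1}$, so fixes it exactly for odd $k$) or, equivalently, from the facts the paper records in Sections 3.3--3.4 that $\dim\zAA^{(2n)m+j}=1$ for odd $j$, that $\rho$ is a graded embedding with $\rho(\La^{(2)})=\La^{(1)}$, and that $\zA^{(2n)m+j}$ is spanned by $\tau_C^{-1}(A_{(2n)m+j})$.
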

	
	Denote the elements $(\tilde \tau_C)^{-1}(A_{(2n)m+j})$ of $\gAA$ by $\La^{(2)}_{(2n)m+j}$.
	  Notice that $\La^{(2)}_1=\sum_{i=0}^n{e_{i}}=\La^{(2)}$. We set $\La^{(2)}_j=0$ if $j$ is even.
	The element $\La^{(2)}_{(2n)m + j}$ generates $\zAA^{(2n)m + j}$.

	\subsection{Lie algebra $\gAA$ as a subalgebra of $\gA$}
	\label{sec sub}
	
	The map  $ \rho :\gAA \to \gA$,
	\bea
	&&
	e_{0}\mapsto E_{0},
	\qquad\phantom{a}
	e_{i}\mapsto E_{i}+E_{2n-i},
	\qquad 
	e_{n}\mapsto E_{n},
	\\
	&&
	f_{0}\mapsto F_{0}, 
	\qquad\phantom{a}
	f_{i}\mapsto F_{i}+F_{2n-i},
	\qquad
	f_{n} \mapsto F_{n},
	\\
	&&
	h_{0} \mapsto H_{0}, 
	\qquad
	h_{i}\mapsto H_{i}+H_{2n-i},
	\qquad
	h_{n}\mapsto H_{n},	
	\eea
	where  $i = 1,\dots, n-1$, realizes the Lie algebra $\gAA$ as a subalgebra of $\gA$.
	This embedding preserves the standard grading and $\rho (\La^{(2)})=\La^{(1)}$.
	We have $\rho (\zAA^{j}) \subset \zA^{j}$.

	\section{mKdV equations}
	\label{sec MKDV}

	In this section we follow \cite{DS}.

	\subsection{The mKdV equations of type $A^{(1)}_{2n-1}$}

Denote by $\Bb$ the space of complex-valued functions of one variable $x$.  Given a finite dimensional 
vector space $W$, denote by $\Bb(W)$ the space of
$W$-valued functions of $x$. Denote by $\der$ the differential operator $\frac d{dx}$.

Consider the Lie algebra $\tilde{\g}(A_{2n-1}^{(1)})$ of the formal differential operators of the form 
\\
$c\der + \sum_{i=-\infty}^k p_i$, $c\in \C$,  $p_i\in \B(\gA^i)$.
Let $U=\sum_{i<0} U_i$, $U_i \in \B(\gA^{i})$. If $\L\in\tilde\g(A_{2n-1}^{(1)})$, define
\bea
e^{\ad\, U}(\L) = \L + [U,\L] + \frac 1{2!}[U,[U,\L]]+\dots \ .
\eea
The operator $e^{\ad \,U}(\L)$ belongs to $\tilde \g(A_{2n-1}^{(1)})$. The map $e^{\ad \,U}$ is an automorphism of the Lie algebra $\tilde \g(A_{2n-1}^{(1)})$.
The automorphisms of this type form a group.
If elements of $\gA$ are realized as matrices depending on a parameter
as in Section \ref{Realizations of gA}, then
$e^{\ad\, U}(\L) = e^U\L e^{-U}$.

	A {\it Miura oper} of type $A^{(1)}_{2n-1}$  is a differential operator of the form
	\bean
	\label{Miura la}
	\L = \der + \Lambda^{(1)} + V,
	\eean
	where $\Lambda^{(1)} = \sum_{i=0}^{2n-1} E_{i}\in\gA$ and $V\in \B(\gA^0)$.
	Any Miura oper of type $A^{(1)}_{2n-1}$ is an element of $\tilde \g(A^{(1)}_{2n-1})$.
	Denote by $\mc{M}(A^{(1)}_{2n-1})$ the space of all Miura opers of type $A^{(1)}_{2n-1}$.

	\begin{prop}
		[{\cite[Proposition 6.2]{DS}}]
		\label{Prop U3}
		
		For any Miura oper $\L$ of type $A^{(1)}_{2n-1}$ there exists an element
		$U=\sum_{i<0} U_i$, $U_i \in \B(\gA^{i})$, such that the operator $\L_0 = e^{\ad U}(\L)$ has the form
		\bea
		\L_0 = \der + \La^{(1)} + H,
		\eea
		where $H=\sum_{j<0} H_j, H_j\in \B(\zA^{j})$. If $U,\tilde U$ are two such elements, then
		$e^{\ad U}e^{-\ad \tilde U} = e^{\ad T}$, where $T = \sum_{j<0} T_j$, $T_j\in \z(A^{(1)}_{2n-1})^j$.
\qed	\end{prop}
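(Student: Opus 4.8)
The plan is to bring $\L=\der+\La^{(1)}+V$ into the stated form one degree at a time, absorbing one homogeneous component $U_{-m}$ of $U$ at each step. The linear‑algebra fact that drives everything is the graded decomposition
\beq
\gA^j=\zA^j\oplus \ad\,\La^{(1)}(\gA^{j-1}),\qquad \ker\big(\ad\,\La^{(1)}|_{\gA^{j-1}}\big)=\zA^{j-1},
\eeq
valid for every $j$. In the realization of Section \ref{Realizations of gA}, $\La^{(1)}=B_1$ is a semisimple matrix whose centralizer in $\slt[\la,\la^{-1}]$ is precisely $\zA$ (spanned by the powers $(\La^{(1)})^j$); hence $\ad\,\La^{(1)}$ is semisimple, so its kernel $\zA$ meets its image in $0$, and a dimension count using $\dim\gA^j\in\{2n-1,2n\}$ (according as $j$ is or is not divisible by $2n$) together with $\dim\zA^j\in\{0,1\}$ shows the sum exhausts $\gA^j$. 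Alternatively one cites \cite{DS}.

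For existence I would induct on $m\ge 1$, producing $U_{-1},U_{-2},\dots$ so that after the $m$‑th conjugation the operator has all components of degrees $0,-1,\dots,2-m$ lying in $\zA$ (the degree‑$0$ part being forced to vanish since $\zA^0=0$). The key observation is that conjugating a partially normalized operator by $e^{\ad\,U_{-m}}$ changes its degree‑$(1-m)$ component by exactly $-\ad\,\La^{(1)}(U_{-m})=[U_{-m},\La^{(1)}]$, while all other contributions of $U_{-m}$ (those from $[U_{-m},\der]$, from the lower components, and from the higher brackets) have degree $\le -m$; in particular the already‑normalized degrees $\ge 2-m$ are left untouched, and degrees $\ge1$ are never affected, so $\La^{(1)}$ survives. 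Writing the current degree‑$(1-m)$ component as $Z+\ad\,\La^{(1)}(W)$ with $Z\in\zA^{1-m}$ and $W\in\gA^{-m}$ via the decomposition above, and setting $U_{-m}=W$, leaves $Z\in\zA^{1-m}$ in that degree; $U_{-m}$ is thereby determined modulo $\zA^{-m}$. Only finitely many of these conjugations touch any fixed degree, so the construction is well defined degree by degree, and by the group property recorded before the Proposition the successive conjugations compose to a single $e^{\ad\,U}$ with $U=\sum_{i<0}U_i$, yielding $\L_0=\der+\La^{(1)}+H$, $H=\sum_{j<0}H_j$, $H_j\in\zA^j$.

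For uniqueness, suppose $U,\tilde U$ both normalize $\L$. By the same group property $e^{\ad\,U}e^{-\ad\,\tilde U}=e^{\ad\,S}$ with $S=\sum_{i<0}S_i$, and $e^{\ad\,S}$ carries $\L_0=\der+\La^{(1)}+H$ to another normalized operator $\der+\La^{(1)}+H'$ with $H,H'$ central. I would show $S\in\zA$ by downward induction on degree. Since $\zA$ is abelian, is stable under $\der$, and commutes with $\La^{(1)}$, once $S_{-1},\dots,S_{1-m}$ are known to be central, every degree‑$(1-m)$ term of $e^{\ad\,S}(\L_0)$ assembled from them and from $\der,\La^{(1)},H$ is again central; the only potentially non‑central contribution in that degree is the linear term $-\ad\,\La^{(1)}(S_{-m})$. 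Comparing with the central component $H'_{1-m}$ forces $\ad\,\La^{(1)}(S_{-m})\in\zA^{1-m}\cap\ad\,\La^{(1)}(\gA^{-m})=0$, hence $S_{-m}\in\zA^{-m}$. Thus every $S_i$ is central, and $T:=S$ has all components in $\zA$, as claimed.

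The main obstacle is the structural decomposition displayed above; once it is in hand, both halves of the statement are formal degree‑by‑degree bookkeeping. Establishing it rigorously reduces to verifying that $\ad\,\La^{(1)}$ is semisimple with centralizer exactly $\zA$, which I would check directly from the explicit matrix form of $\La^{(1)}=B_1$ and the description of $\zA^{(2n)m+j}$ by the elements $\La^{(1)}_{(2n)m+j}$.
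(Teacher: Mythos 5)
Your argument is correct and is essentially the proof of \cite[Proposition 6.2]{DS} that the paper cites without reproducing: the decomposition $\gA^j=\zA^j\oplus\ad\La^{(1)}(\gA^{j-1})$ (which holds because $\La^{(1)}$ is regular semisimple as a matrix over $\C(\la)$, so $\ker\ad\La^{(1)}=\zA$ meets $\on{Im}\,\ad\La^{(1)}$ trivially, and the dimension count closes the gap), followed by degree-by-degree normalization for existence and downward induction for the centrality of $T$. The only blemish is an off-by-one in the inductive bookkeeping (after the $m$-th conjugation the normalized degrees are $0,\dots,1-m$, not $0,\dots,2-m$), which does not affect the argument.
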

	
	Let $\L, U$ be as in Proposition \ref{Prop U3}. Let  $r \neq 0$ mod $2n$.
	The element $\phi(\La^{(1)}_{r})=e^{-\ad U} (\La^{(1)}_{r})$ does not depend on the choice of $U$ in Proposition \ref{Prop U3}.
	
	The element $\phi(\La^{(1)}_{r})$
	is of the form $\sum^k_{i=-\infty} \phi(\La^{(1)}_{r})^i$, $\phi(\La^{(1)}_{r})^i\in\B(\gA^i)$.
	We set $\phi(\La^{(1)}_{r})^+ = \sum^k_{i=0} \phi(\La^{(1)}_{r})^i$,\
	$\phi(\La^{(1)}_{r})^- = \sum_{i<0} \phi(\La^{(1)}_{r})^i$.

	Let $r\in\Z_{>0}$ and  $r \neq 0$ mod $2n$.
	The differential equation
	\bean
	\label{mKdVr 3}
	\frac{\partial \L}{\partial t_r}  = [\phi(\La^{(1)}_{r})^+, \L]
	\eean
	is called the {\it  $r$-th  mKdV equation} of type $A^{(1)}_{2n-1}$.
	
	Equation \Ref{mKdVr 3} defines vector fields $\frac\der{\der t_r}$ on the space $\M(A^{(1)}_{2n-1})$ of Miura opers of type $A^{(1)}_{2n-1}$. For
	all $r,s$, the vector fields $\frac\der{\der t_r}$, $\frac\der{\der t_s}$ commute, see \cite[Section 6]{DS}.
	
	\begin{lem}[{\cite{DS}}]
		\label{lem der1}
		
		We have
		\bean
		\label{mKdVr 0}
		\frac{\partial\L}{\partial t_r}  = - \frac d{dx}\phi(\La^{(1)}_{r})^0.
		\eean
\qed
	\end{lem}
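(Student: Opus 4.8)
The plan is to combine the a priori homogeneity of the left-hand side with the vanishing of the full commutator $[\phi(\La^{(1)}_r),\L]$. First I would observe that, since $\der$ and $\Lambda^{(1)}$ do not depend on $t_r$, the time derivative of $\L=\der+\Lambda^{(1)}+V$ is $\frac{\partial\L}{\partial t_r}=\frac{\partial V}{\partial t_r}\in\B(\gA^0)$, so the left-hand side is automatically homogeneous of degree $0$. Next I would show that $\phi(\La^{(1)}_r)$ commutes with $\L$. Let $U$ be as in Proposition \ref{Prop U3}, so that $\L_0=e^{\ad U}(\L)=\der+\La^{(1)}+H$ with $H=\sum_{j<0}H_j$, $H_j\in\B(\zA^j)$. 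Because $\La^{(1)}_r\in\zA$ is an $x$-independent element of the abelian algebra $\zA$, it commutes with $\La^{(1)}$ and with every $H_j$, and $[\La^{(1)}_r,\der]=0$; hence $[\La^{(1)}_r,\L_0]=0$. Applying the automorphism $e^{-\ad U}$ and using $\phi(\La^{(1)}_r)=e^{-\ad U}(\La^{(1)}_r)$ and $\L=e^{-\ad U}(\L_0)$ yields $[\phi(\La^{(1)}_r),\L]=e^{-\ad U}([\La^{(1)}_r,\L_0])=0$.

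Splitting $\phi(\La^{(1)}_r)=\phi(\La^{(1)}_r)^++\phi(\La^{(1)}_r)^-$ and inserting $[\phi(\La^{(1)}_r),\L]=0$ into the defining equation \Ref{mKdVr 3} gives
\[
\frac{\partial\L}{\partial t_r}=[\phi(\La^{(1)}_r)^+,\L]=-[\phi(\La^{(1)}_r)^-,\L].
\]
Now I would run a degree count. Bracketing with $\der$ or with $V\in\B(\gA^0)$ preserves the grading, while bracketing with $\Lambda^{(1)}\in\gA^1$ raises it by one. Since $\phi(\La^{(1)}_r)^+$ has components in degrees $0,\dots,r$, the term $[\phi(\La^{(1)}_r)^+,\L]$ has only nonnegative degrees; since $\phi(\La^{(1)}_r)^-$ has components in degrees $\le-1$, the term $[\phi(\La^{(1)}_r)^-,\L]$ has only nonpositive degrees. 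As the two coincide up to sign, both are homogeneous of degree $0$, consistent with the first step.

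It then remains to extract the degree-$0$ component of $[\phi(\La^{(1)}_r)^+,\L]$. Writing $[\phi(\La^{(1)}_r)^+,\L]=[\phi(\La^{(1)}_r)^+,\der]+[\phi(\La^{(1)}_r)^+,\Lambda^{(1)}]+[\phi(\La^{(1)}_r)^+,V]$, the middle bracket has lowest input degree $0$ and $\Lambda^{(1)}$ raises degree by one, so it contributes only in degrees $\ge1$; the last bracket contributes in degree $0$ solely through $[\phi(\La^{(1)}_r)^0,V]$, which vanishes because $\gA^0=\h$ is abelian; and $[\phi(\La^{(1)}_r)^+,\der]=-\frac{d}{dx}\phi(\La^{(1)}_r)^+$ contributes exactly $-\frac{d}{dx}\phi(\La^{(1)}_r)^0$ in degree $0$. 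Collecting these gives $\frac{\partial\L}{\partial t_r}=-\frac{d}{dx}\phi(\La^{(1)}_r)^0$, as claimed. I expect the only delicate point to be the bookkeeping that forces both commutators to be purely of degree $0$; once the abelianness of $\gA^0$ and the degree-raising property of $\Lambda^{(1)}$ are in place, the remaining computation is immediate.
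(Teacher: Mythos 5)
Your argument is correct and is exactly the standard Drinfeld--Sokolov computation: the paper itself gives no proof of Lemma \ref{lem der1}, simply citing \cite{DS}, and your chain of steps (conjugating to $\L_0=\der+\La^{(1)}+H$ to get $[\phi(\La^{(1)}_r),\L]=0$, hence $[\phi^+,\L]=-[\phi^-,\L]$ is concentrated in degree $0$, then reading off the degree-$0$ part from $[\phi^+,\der]$ using that $\gA^0=\h$ is abelian and $\La^{(1)}$ raises degree) is precisely the argument in \cite[Section 6]{DS}. No gaps; the only cosmetic remark is that your opening observation that $\der\L/\der t_r\in\B(\gA^0)$ is not needed as an input, since your degree count re-derives it.
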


	\subsection{mKdV equations of type $C^{(1)}_{n}$}

	A {\it Miura oper} of type $C^{(1)}_{n}$  is a differential operator of the form
	\bean
	\label{Miura la}
	\L = \der + \Lambda^{(2)} + V,
	\eean
	where $\Lambda^{(2)} =\sum_{i=0}^{n} e_{i} \in\gAA$ and $V\in \B(\gAA^0)$.
	Denote by $\mc{M}(C^{(1)}_{n})$ the space of all Miura opers of type $C^{(1)}_{n}$.

	\begin{prop}
		[{\cite[Proposition 6.2]{DS}}]
		\label{Prop U}
		
		For any Miura oper $\L$ of type $C^{(1)}_{n}$ there exists an element
		$U=\sum_{i<0} U_i$, $U_i \in \B(\g(C^{(1)}_{n})^{i})$, such that the operator $\L_0 = e^{\ad U}(\L)$ has the form
		\bea
		\L_0 = \der + \La^{(2)} + H,
		\eea
		where $H=\sum_{j<0} H_j, H_j\in \B(\zAA^{j})$. If $U,\tilde U$ are two such elements, then
		$e^{\ad U}e^{-\ad \tilde U} = e^{\ad T}$, where $T = \sum_{j<0} T_j$, $T_j\in \zAA^j$.
\qed	\end{prop}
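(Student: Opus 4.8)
The statement is the $\AT$ analogue of Proposition \ref{Prop U3}, and I would prove it by the same degree-by-degree \emph{dressing} argument of Drinfeld--Sokolov; the only structural input is a centralizer decomposition for $\La^{(2)}$. Concretely the plan rests on the graded linear-algebra fact
\beq
\gAA^{\,j} \ = \ \zAA^{\,j}\ \oplus\ [\La^{(2)},\gAA^{\,j-1}]\qquad\text{for every } j\in\Z,
\eeq
equivalently $\ker(\ad\La^{(2)})=\zAA$ and $\gAA=\zAA\oplus\on{im}(\ad\La^{(2)})$ compatibly with the grading. This holds because $\La^{(2)}=\sum_{i=0}^{n} e_i$ is a cyclic (regular) element; one reads it off from the realization of $\gAA$ inside $L(\slt,C)$, or deduces it from the $A_{2n-1}^{(1)}$ case through the embedding $\rho$ of Section \ref{sec sub}, since $\rho$ preserves the grading, $\rho(\La^{(2)})=\La^{(1)}$ and $\rho(\zAA^{\,j})\subset\zA^{\,j}$. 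I also record the two effects of conjugation: from Section \ref{sec MKDV}, $[U,\der]=-U'$, and for $U_{-m}\in\B(\gAA^{-m})$ the automorphism $e^{\ad U_{-m}}$ leaves every component of degree $>1-m$ unchanged and adds $[U_{-m},\La^{(2)}]=-\ad\La^{(2)}(U_{-m})$ to the component of degree $1-m$.

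For existence I would build $U=\sum_{m\ge1}U_{-m}$ recursively. Writing $\L=\der+\La^{(2)}+V$ with $V\in\B(\gAA^0)$, the top term $\La^{(2)}$ (degree $1$) is never disturbed. At step $m\ge1$ assume $e^{\ad(U_{-1}+\dots+U_{-(m-1)})}(\L)$ already has all components of degree $>1-m$ in $\B(\zAA)$ (automatically $0$ in even degree). Its degree-$(1-m)$ component $W_{1-m}\in\B(\gAA^{\,1-m})$ is replaced by $W_{1-m}-\ad\La^{(2)}(U_{-m})$, so by the decomposition above I may choose $U_{-m}\in\B(\gAA^{-m})$ making this lie in $\B(\zAA^{\,1-m})$; the remaining contributions of $e^{\ad U_{-m}}$ (the term $-U_{-m}'$ from $[U_{-m},\der]$, the brackets with the already-normalized central terms, and all higher commutators) affect only strictly lower degrees, so the induction continues. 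The limit $\L_0=\der+\La^{(2)}+H$ has $H=\sum_{j<0}H_j$, $H_j\in\B(\zAA^{\,j})$, as required, and at each stage $U_{-m}$ is pinned down only modulo $\ker(\ad\La^{(2)})\cap\gAA^{-m}=\zAA^{-m}$.

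For the uniqueness clause let $U,\tilde U$ be two admissible choices and write $e^{\ad U}e^{-\ad\tilde U}=e^{\ad W}$, $W=\sum_{i<0}W_i$ of negative degree, using that these automorphisms form a group (Section \ref{sec MKDV}). It carries $\tilde\L_0=e^{\ad\tilde U}(\L)$ to $\L_0=e^{\ad U}(\L)$, both normalized. I would show $W\in\B(\zAA)$ by peeling off its top component: if $W_{-s}$ is the highest nonzero one, comparing degree-$(1-s)$ parts of $e^{\ad W}(\tilde\L_0)=\L_0$ gives $[W_{-s},\La^{(2)}]\in\zAA^{\,1-s}$, whence $W_{-s}\in\zAA^{-s}$ because $\on{im}(\ad\La^{(2)})\cap\zAA=0$. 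Since $W_{-s}$ is then central, $e^{-\ad W_{-s}}(\L_0)$ differs from $\L_0$ only by the central term $W_{-s}'$, hence is again normalized; so $e^{-\ad W_{-s}}e^{\ad W}$ again takes a normalized oper to a normalized oper and has top degree $<-s$, and iterating degree by degree shows every $W_i$ is central. Thus $T:=W$ satisfies $T_j\in\zAA^{\,j}$, which is the assertion.

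The heart of the matter — and the only place where anything beyond formal bookkeeping enters — is the centralizer decomposition $\gAA=\zAA\oplus\on{im}(\ad\La^{(2)})$: both the solvability of each normalization step and the identity $\on{im}(\ad\La^{(2)})\cap\zAA=0$ used for uniqueness rest on it. I expect this to be the main obstacle in the twisted/folded setting. The cleanest route is to transport it from type $A_{2n-1}^{(1)}$ via $\rho$, checking that the normalizing transformation for the $\rho$-image $\der+\La^{(1)}+\rho(V)$ of $\L$ can be chosen inside the subalgebra $\rho(\gAA)$ (equivalently, invariant under the folding automorphism of $\gA$ that fixes $\La^{(1)}$, the grading, and $\zA$); once that compatibility is secured, the existence and uniqueness descend verbatim from Proposition \ref{Prop U3}.
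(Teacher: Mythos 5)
Your proof is correct and is precisely the standard Drinfeld--Sokolov dressing argument: the paper offers no proof of its own, citing \cite[Proposition 6.2]{DS}, and your degree-by-degree recursion based on the graded decomposition $\gAA^{\,j}=\zAA^{\,j}\oplus[\La^{(2)},\gAA^{\,j-1}]$, together with the top-component peeling for uniqueness, is exactly the argument being cited. The one ingredient you single out as the potential obstacle --- that centralizer decomposition for the cyclic element in the twisted setting --- is itself established in \cite{DS}, so nothing is missing.
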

	
	Let $\L, U$ be as in Proposition \ref{Prop U}. Let  $r$ be odd.
	The element $\phi(\La^{(2)}_r)=e^{-\ad U}(\La^{(2)}_r)$ does not depend on the choice of $U$ in Proposition \ref{Prop U}.
	
	The element $\phi(\La^{(2)}_r)$
	is of the form $\sum^k_{i=-\infty} \phi(\La^{(2)}_r)^i$, $\phi(\La^{(2)}_r)^i\in\B(\gAA^i)$.
	We set $\phi(\La^{(2)}_r)^+ = \sum^k_{i=0} \phi(\La^{(2)}_r)^i$,
	$\phi(\L^{(2)}a_r)^- = \sum_{i<0} \phi(\La^{(2)}_r)^i$.

	Let $r\in\Z_{>0}$, $r$ odd.
	The differential equation
	\bean
	\label{mKdVr}
	\frac{\partial \L}{\partial t_r}  = [\phi(\La^{(2)}_r)^+, \L]
	\eean
	is called the {\it  $r$-th  mKdV equation} of type $C^{(1)}_{n}$.
	
	Equation \Ref{mKdVr} defines vector fields $\frac\der{\der t_r}$ on the space $\M(C^{(1)}_n)$ of Miura opers. For
	all $r,s$, the vector fields $\frac\der{\der t_r}$, $\frac\der{\der t_s}$ commute, see \cite[Section 6]{DS}.

	\begin{lem}[{\cite{DS}}]
		\label{lem der2}
		
		We have
		\bean
		\label{mKdVr 0}
		\frac{\partial\L}{\partial t_r}  = - \frac d{dx}\phi(\La^{(2)}_r)^0.
		\eean
\qed
	\end{lem}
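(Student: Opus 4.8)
The plan is to exploit the fact that $\La^{(2)}_r$ lies in the \emph{abelian} subalgebra $\zAA$, so that after the gauge transformation of Proposition \ref{Prop U} the flow collapses onto its degree-zero component. First I would pass to the normal form: by Proposition \ref{Prop U} there is $U=\sum_{i<0}U_i$, $U_i\in\B(\gAA^i)$, with $\L_0 = e^{\ad U}(\L) = \der + \La^{(2)} + H$, where $H=\sum_{j<0}H_j$ and $H_j\in\B(\zAA^j)$. Since $\La^{(2)}=\La^{(2)}_1\in\zAA$ and $H$ takes values in $\zAA$, while $\La^{(2)}_r$ is a constant element (so $[\La^{(2)}_r,\der]=0$), and since $\zAA$ is abelian, all three brackets vanish and $[\La^{(2)}_r,\L_0]=0$. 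Applying the automorphism $e^{-\ad U}$ and recalling $\phi(\La^{(2)}_r)=e^{-\ad U}(\La^{(2)}_r)$ together with $\L=e^{-\ad U}(\L_0)$, I obtain the crucial identity $[\phi(\La^{(2)}_r),\L]=0$.

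Write $P=\phi(\La^{(2)}_r)$ and decompose $P=P^++P^-$ into its nonnegative and negative graded parts. The identity above reads $[P^+,\L]=-[P^-,\L]$, so the mKdV right-hand side admits two representations. Now I would run a graded degree count. Since $\der$ and $V$ have degree $0$ while $\La^{(2)}$ has degree $1$, the bracket $[P^+,\L]$ lives in degrees $\ge 0$, whereas $[P^-,\L]$ lives in degrees $\le 0$. Being equal up to sign, both are therefore concentrated in degree $0$, which is exactly consistent with $\frac{\partial\L}{\partial t_r}=\frac{\partial V}{\partial t_r}\in\B(\gAA^0)$.

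It remains to extract the degree-zero component of $[P^+,\L]=[P^+,\der]+[P^+,\La^{(2)}]+[P^+,V]$. The term $[P^+,\La^{(2)}]$ contributes nothing in degree $0$, since it raises degree by one and $P^+$ has only nonnegative components; the term $[P^+,\der]=-\frac{d}{dx}P^+$ contributes $-\frac{d}{dx}P^0$; and $[P^+,V]$ contributes $[P^0,V]$. The only point requiring the Miura oper structure is the vanishing of this last contribution: both $P^0$ and $V$ take values in $\gAA^0=\h$, which is abelian, so $[P^0,V]=0$. This yields $\frac{\partial\L}{\partial t_r}=-\frac{d}{dx}P^0=-\frac{d}{dx}\phi(\La^{(2)}_r)^0$, as claimed. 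As a consistency check, the degree-zero part of $[P,\L]=0$ gives $[P^{-1},\La^{(2)}]=\frac{d}{dx}P^0$, so the alternative expression $-[P^-,\L]$ agrees.

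The argument is entirely parallel to the one proving Lemma \ref{lem der1} in type $A^{(1)}_{2n-1}$. There is no genuinely hard step here; the thing to be careful about is the bookkeeping of graded degrees in the commutators, in particular the sign coming from $[P^i,\der]=-\frac{d}{dx}P^i$ and the grading shift produced by $\La^{(2)}$ as opposed to $\der$. Once the two-sided degree bound forces the bracket into degree $0$, the abelianness of $\h=\gAA^0$ finishes the computation.
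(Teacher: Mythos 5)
Your argument is correct and is precisely the standard Drinfeld--Sokolov argument that the paper invokes by citing \cite{DS} without reproducing a proof: the key points, namely $[\phi(\La^{(2)}_r),\L]=0$ via conjugation of the identity $[\La^{(2)}_r,\L_0]=0$ in the abelian algebra $\zAA$, the two-sided degree bound forcing $[P^+,\L]=-[P^-,\L]$ into degree zero, and the vanishing of $[P^0,V]$ because $\gAA^0=\h$ is abelian, are all handled correctly. Nothing to add.
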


	\subsection{Comparison of  mKdV equations of types $C^{(1)}_{n}$ and $A^{(1)}_{2n-1}$}
	\label{sec comp}

	Consider $\g(C^{(1)}_{n})$ as a Lie subalgebra of $\gA$, see Section \ref{sec sub}. If $\L$ is a Miura oper
	of type $C^{(1)}_{n}$, then it  is also a Miura oper of type $A^{(1)}_{2n-1}$.  We have $\mc M(C^{(1)}_{n})\subset
\mc M(A^{(1)}_{2n-1})$,
\bean
\label{M1}
&&
\phantom{aaa}
\mc M(A^{(1)}_{2n-1}) = \{\L=\der + \La^{(1)} + \sum_{i=1}^{2n} v_ie_{i,i}\ |\  \sum_{i=1}^{2n} v_i =0\},
\\
&&
\notag
\mc M(C^{(1)}_{n}) = \{\L=\der + \La^{(1)} + \sum_{i=1}^{2n} v_ie_{i,i}\ |  \sum_{i=1}^{2n} v_i =0,\
v_j+v_{2n+1-j}=0, j=1,\dots,n\}.
\eean

	\begin{lem}
		Let $r$ be odd, $r>0$. 
		Let $\L^{C^{(1)}_{n}}(t_r)$ be the solution of the $r$-th mKdV equation of type $C^{(1)}_{n}$
		with initial condition $\L^{C^{(1)}_{n}}(0)=\L$. Let $\L^{A^{(1)}_{2n-1}}(t_r)$ be the solution of the $r$-th mKdV equation of type $A^{(1)}_{2n-1}$
	with initial condition $\L^{A^{(1)}_{2n-1}}(0)=\L$. Then $\L^{C^{(1)}_{n}}(t_r)=\L^{A^{(1)}_{2n-1}}(t_r)$ for all values of $t_r$.
		\qed
	\end{lem}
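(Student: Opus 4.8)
The plan is to use the graded embedding $\rho:\gAA\hookrightarrow\gA$ of Section~\ref{sec sub} to show that the two mKdV vector fields coincide along $\mc M(C^{(1)}_{n})$, and then to invoke uniqueness of solutions of an evolution equation. As preparation I would first record the compatibility $\tau_0\circ\rho=\tilde\tau_0$ of the standard realizations, which holds on canonical generators (for instance $\tau_0(E_i+E_{2n-i})=\tilde\tau_0(e_i)$ and $\tau_0(E_0)=\tilde\tau_0(e_0)$). Since $\La^{(2)}_r=(\tilde\tau_0)^{-1}(B_r)$ and $\La^{(1)}_r=(\tau_0)^{-1}(B_r)$ are built from the same matrix $B_r=(B_1)^r$, injectivity of $\tau_0$ then yields $\rho(\La^{(2)}_r)=\La^{(1)}_r$ for every odd $r>0$. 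In particular the normalizations agree, which is exactly what will force the two flows to share the same time parameter $t_r$ rather than differing by a rescaling.

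Next I would compare the dressing transformations of Propositions~\ref{Prop U} and~\ref{Prop U3}. Extend $\rho$ to the formal differential operators by letting it act on coefficient functions and fixing $\der$; this is legitimate because $\rho$ commutes with the $x$-derivative, so $\rho([\der,p])=[\der,\rho(p)]$. Let $\L\in\mc M(C^{(1)}_{n})\subset\mc M(A^{(1)}_{2n-1})$ and let $U=\sum_{i<0}U_i$, $U_i\in\B(\gAA^i)$, be a type-$C^{(1)}_n$ dressing, so that $e^{\ad U}(\L)=\der+\La^{(2)}+H$ with $H_j\in\B(\zAA^j)$. Applying $\rho$, using $\rho\circ\ad_U=\ad_{\rho(U)}\circ\rho$ and hence $\rho\circ e^{\ad U}=e^{\ad\rho(U)}\circ\rho$, together with $\rho(\La^{(2)})=\La^{(1)}$ and $\rho(\zAA^j)\subset\zA^j$, I obtain
\be
e^{\ad\rho(U)}(\L)=\der+\La^{(1)}+\rho(H)
\ee
with $\rho(U)_i\in\B(\gA^i)$ and $\rho(H)_j\in\B(\zA^j)$. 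Thus $\rho(U)$ is an admissible type-$A^{(1)}_{2n-1}$ dressing of $\L$. By the uniqueness clause of Proposition~\ref{Prop U3} the element $\phi(\La^{(1)}_r)$ does not depend on the chosen dressing, so I may compute it using $\rho(U)$:
\be
\phi(\La^{(1)}_r)=e^{-\ad\rho(U)}(\La^{(1)}_r)=\rho\big(e^{-\ad U}(\La^{(2)}_r)\big)=\rho(\phi(\La^{(2)}_r)).
\ee
Because $\rho$ preserves the standard grading, taking the nonnegative-degree part gives $\phi(\La^{(1)}_r)^+=\rho(\phi(\La^{(2)}_r)^+)$.

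Finally I would close the argument by uniqueness. At any point $\L\in\mc M(C^{(1)}_{n})$ the last identity and the fact that $\rho$ is a homomorphism give $[\phi(\La^{(1)}_r)^+,\L]=\rho\big([\phi(\La^{(2)}_r)^+,\L]\big)$, i.e.\ the right-hand sides of \Ref{mKdVr} and \Ref{mKdVr 3} agree under the identification $\mc M(C^{(1)}_{n})\subset\mc M(A^{(1)}_{2n-1})$. Therefore the type-$C^{(1)}_n$ solution $\L^{C^{(1)}_{n}}(t_r)$, which remains in $\mc M(C^{(1)}_{n})$ for all $t_r$ by definition of the flow, also solves the type-$A^{(1)}_{2n-1}$ mKdV equation \Ref{mKdVr 3} with the same initial value $\L$. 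Since that equation defines a flow and hence has a unique solution, $\L^{C^{(1)}_{n}}(t_r)=\L^{A^{(1)}_{2n-1}}(t_r)$ for all $t_r$. I expect the only delicate point to be the second step—verifying that a type-$C^{(1)}_n$ dressing is carried by $\rho$ to an admissible type-$A^{(1)}_{2n-1}$ dressing—which rests entirely on $\rho$ being a graded embedding with $\rho(\La^{(2)})=\La^{(1)}$ and $\rho(\zAA^j)\subset\zA^j$; the remaining manipulations are formal.
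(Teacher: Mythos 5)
Your argument is exactly the paper's proof, fully spelled out: the paper's one-line justification is that the dressing $U$ from Proposition~\ref{Prop U} also serves as a dressing in the sense of Proposition~\ref{Prop U3}, which is precisely your key step, and your verifications that $\rho(\La^{(2)}_r)=\La^{(1)}_r$, that $\rho$ carries a type-$C^{(1)}_n$ dressing to an admissible type-$A^{(1)}_{2n-1}$ dressing, and that $\phi(\La^{(1)}_r)^+=\rho(\phi(\La^{(2)}_r)^+)$ are the details the paper leaves implicit. The proposal is correct and takes essentially the same route.
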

	
	\begin{proof}
		The element $U$ in Proposition \ref{Prop U} which is used to construct the mKdV equation of type $C^{(1)}_{n}$ can be used also
		to construct the mKdV equation of type $A^{(1)}_{2n-1}$.
	\end{proof}

	\subsection{KdV equations of type $A^{(1)}_{2n-1}$}
	\label{sec KdV}

	Let $\Bb((\der^{-1}))$ be the algebra of formal pseudodifferential operators of the form
		$a=\sum_{i \in \Z} a_i \der^i$, with $a_i \in \Bb$ and finitely many terms
only with $i > 0$.
	The relations in this algebra are
	\bea
	\partial^k u - u \partial^k = \sum_{i = 1}^\infty k(k-1)\dots(k-i+1)\frac{d^i u}{dx^i}\partial^{k-i}
	\eea
	for any $k\in\Z$ and $u\in\Bb$.
	For $a = \sum_{i \in \Z} a_i \der^i \in \Bb((\der^{-1}))$, define $a^+ = \sum_{i \geq 0} a_i \der^i$.

	Denote $\Bb[\der] \subset \Bb((\der^{-1}))$ the subalgebra of differential operators $a = \sum_{i=0}^m a_i \der^i$
	with $m\in\Z_{\geq 0}$. Denote $\D \subset \Bb[\der]$ the affine subspace of differential operators of the
	form
\\
 $L=\der^{2n} + \sum\limits_{i=0}^{2n-2}u_i\der^{i}$.

	For $L \in \D$, there exists a unique $L^{\frac{1}{2n}} =\der + \sum_{i\leq 0}a_i\der^i \in \Bb((\der^{-1}))$
	such that  $(L^{\frac{1}{2n}})^{2n} = L$. For $r\in\N$, we have $L^{\frac{r}{2n}} = \der^r + \sum^{r-1}_{i=-\infty}
	b_i\der^i$, $b_i\in \B$.
	We set $(L^{\frac{r}{2n}})^+ = \der^r + \sum^{r-1}_{i=0} b_i\der^i$.

	For $r\in \N$, the differential equation
	\beq
	\label{KdVr}
	\frac{\partial L}{\partial t_r} = [L,(L^{\frac{r}{2n}})^+]
	\eeq
	is called the  {\it  $r$-th  KdV equation} of type $A^{(1)}_{2n-1}$.

	Equation \Ref{KdVr} defines flows $\frac{\partial}{\partial t_r}$ on the space $\D$.
	For all $r,s \in \N$ the flows $\frac{\partial}{\partial t_r}$ and $\frac{\partial}{\partial t_s}$ commute, see \cite{DS}.

	\subsection{Miura maps}
	\label{sec Miura maps}

	Let $\Ll = \der + \Lambda^{(1)} + V$ be a Miura oper of type $A^{(1)}_{2n-1}$ with $V = \sum_{k = 1}^{2n} v_k e_{k,k}$, $\sum_{k=1}^{2n}v_k=0$.  For $i=0,\dots,2n$, define
	the scalar differential operator $L_i=\der^{2n}+\sum_{j=0}^{2n-2}u_{j,i}\der^{j}\in\D$ by the formula:
	\bean
\label{miuramap}
	&
	L_0 =L_{2n}= (\der - v_{2n})(\der - v_{2n-1})\dots(\der - v_{2})(\der - v_1),
	\\
\notag
	&
	L_i = (\der - v_i)(\der - v_{i-1})\dots(\der - v_1)(\der - v_{2n})\dots(\der - v_{i+2})(\der - v_{i+1}),
	\eean
for $i=1,\dots,2n-1$.
	
	\begin{thm}[{\cite[Proposition 3.18]{DS}}]
		\label{thm mkdvtokdv}
		Let a Miura oper $\Ll$ satisfy the mKdV equation \Ref{mKdVr 3} for some $r$.  Then for every $i=0,\dots,2n-1$ the differential operator
		$L_i$ satisfies the KdV equation \Ref{KdVr}.
	\end{thm}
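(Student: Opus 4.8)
The plan is to transport the matrix (zero-curvature) Lax equation \Ref{mKdVr 3} to the scalar Lax equation \Ref{KdVr} by passing through the solutions of the linear system $\L\Psi=0$. First I would use the realization of $\gA$ over $\C[\la,\la^{-1}]$ from Section \ref{Realizations of gA}, in which $\La^{(1)}$ is the cyclic matrix $\sum_{i=1}^{2n-1}e_{i+1,i}+\la\,e_{1,2n}$ and $V=\sum_{k=1}^{2n}v_ke_{k,k}$. Writing the solution as a column vector $\Psi=(\psi_1,\dots,\psi_{2n})$, the system $\L\Psi=0$ yields the recursion $\psi_{k-1}=-(\der+v_k)\psi_k$ for $k=2,\dots,2n$ together with the closing relation $(\der+v_1)\psi_1=-\la\,\psi_{2n}$ from the top row. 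Eliminating all components in favour of one cyclic component $\psi$, one obtains a scalar equation $M\psi=\la\,\psi$ with $M$ a product of first order factors; since $(\La^{(1)})^{2n}=\la\,I$, this is consistent and identifies $\la$ with the action of $M$ on the scalar wave function. A direct comparison shows that $M$ coincides, up to formal adjoint and a cyclic relabeling, with the operator $L_i$ of \Ref{miuramap}, the index $i$ recording which component is taken as cyclic generator; since taking the formal adjoint preserves the class of solutions of \Ref{KdVr}, it suffices to treat $M$.

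Next I would determine how solutions evolve along the flow. If $\L$ satisfies \Ref{mKdVr 3} and $\L\Psi=0$, then differentiating $\L\Psi=0$ in $t_r$ and using $\der_{t_r}\L=[\phi(\La^{(1)}_r)^+,\L]$ gives $\L\big(\der_{t_r}\Psi-\phi(\La^{(1)}_r)^+\Psi\big)=0$, so $\der_{t_r}\Psi=\phi(\La^{(1)}_r)^+\Psi$ modulo $\ker\L$. Reading off the cyclic component and re-expressing the right hand side through the recursion, the flow of the scalar wave function takes the form $\der_{t_r}\psi=P_r\psi$ modulo $(M-\la)\psi=0$, where $P_r$ is a differential operator built from $\phi(\La^{(1)}_r)^+$.

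The heart of the matter, and the step I expect to be the main obstacle, is to identify $P_r$ with $(M^{\frac{r}{2n}})^+$. For this I would set up the dictionary between two dressings: the matrix transformation $e^{\ad U}$ of Proposition \ref{Prop U3}, which conjugates $\L$ into $\der+\La^{(1)}+H$ with $H$ valued in the abelian subalgebra $\zA$, and the pseudodifferential dressing $W$ normalizing $M$ as $M^{\frac{1}{2n}}=W\der W^{-1}$. Under this correspondence $\La^{(1)}_r=(\La^{(1)})^r$ matches $\der^r$ before dressing, so that $\phi(\La^{(1)}_r)$ matches $M^{\frac{r}{2n}}$; the genuinely delicate point is that the two truncations agree, namely that the nonnegative part in the standard $\Z$-grading of $\gA$ maps, on the cyclic component and modulo $(M-\la)\psi=0$, to the differential operator part $(\,\cdot\,)^+$ in $\B((\der^{-1}))$. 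This is precisely the compatibility of the standard gradation of $\gA$ with the order filtration on pseudodifferential operators, and making this compatibility precise is where the real work lies.

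Finally, granting $\der_{t_r}\psi=(M^{\frac{r}{2n}})^+\psi$ modulo $(M-\la)\psi=0$, I would conclude by differentiating $M\psi=\la\psi$ in $t_r$. Since $\la$ is a constant spectral parameter, this gives $(\der_{t_r}M)\psi=\big[(M^{\frac{r}{2n}})^+,M\big]\psi$, and because the identity holds on the whole family of solutions parametrized by $\la$, the operator relation $\der_{t_r}M=[(M^{\frac{r}{2n}})^+,M]$ follows; this is \Ref{KdVr} for $M$ up to the standard sign convention, and hence \Ref{KdVr} for each $L_i$. Since $\La^{(1)}$ and the flow $\phi(\La^{(1)}_r)^+$ are invariant under the cyclic symmetry of the system, the same computation applied to each cyclic component establishes the claim for every $i=0,\dots,2n-1$.
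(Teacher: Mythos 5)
The paper offers no proof of this theorem: it is imported verbatim from Drinfeld--Sokolov as their Proposition 3.18, so the only available comparison is with the architecture of their argument. Your outline has the right shape, and the routine parts are correct: the elimination in the linear problem $\L\Psi=0$ does give $\psi_{k-1}=-(\der+v_k)\psi_k$ and $(\der+v_1)\psi_1=-\la\psi_{2n}$, hence $M_i\psi_i=\la\psi_i$ with $M_i=(\der+v_{i+1})\cdots(\der+v_{2n})(\der+v_1)\cdots(\der+v_i)$ and $M_i^{*}=L_i$; the evolution $\der_{t_r}\Psi=\phi(\La^{(1)}_r)^+\Psi$ modulo $\ker\L$ follows from \Ref{mKdVr 3} exactly as you say; and the final ``enough solutions for generic $\la$'' argument upgrading the identity on wave functions to an operator identity is standard.

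The genuine gap is the step you yourself flag as ``where the real work lies'' and then do not carry out: the identification of the two truncations, i.e.\ the claim that the grade-$\geq 0$ part $\phi(\La^{(1)}_r)^+$ in the standard gradation of $\gA$ induces, on the cyclic component and modulo $(M-\la)$, precisely $(M^{\frac r{2n}})^+$. This is not a routine compatibility of filtrations: before reduction, $\phi(\La^{(1)}_r)^+$ contains positive powers of $\la$, and the substitution $\la\mapsto M$ raises differential order in jumps of $2n$, so the matrix truncation (by grade) and the scalar truncation (by order of $\der$) are taken with respect to gradings that only match \emph{after} reduction modulo $(M-\la)$; one must show that the discarded part $\phi(\La^{(1)}_r)^-$ contributes only strictly negative order after reduction. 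That verification is exactly the content of the chain of results culminating in \cite[Proposition 3.18]{DS}; since everything else in your outline is bookkeeping, the missing step \emph{is} the theorem. A secondary, fixable issue: passing the Lax equation through the formal adjoint $L_i=M_i^{*}$ uses $((M^{*})^{\frac r{2n}})^+=(-1)^r((M^{\frac r{2n}})^+)^{*}$, which produces an overall sign $(-1)^{r+1}$; this is harmless for odd $r$ (the only case used later for $C^{(1)}_n$), but Theorem \ref{thm mkdvtokdv} is stated for all $r\not\equiv 0 \bmod 2n$, so the sign must be tracked rather than waved away as convention.
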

	
	For $i=0,\dots,2n$, we define the {\it  $i$-th Miura map} by the formula
	\bea
	\frak{ m}_i \ : \ \mc M(A^{(1)}_{2n-1})\  \to \ \D,
	\quad
	\Ll \ \mapsto \ L_i,
	\eea
	see \Ref{miuramap}.  
	
For $i=0,1,\dots,2n-1$, an {\it $i$-oper } is a differential operator of the form
	\bea
	\Ll = \der + \Lambda^{(1)} + V + W,
	\eea
	with $V \in \Bb(\gA^0)$ and $W \in \Bb(\n^-_i)$.
	For $w \in \Bb(\n^-_i)$ and an $i$-oper $\Ll$, the differential operator
	$e^{\text{ad} \, w}(\L)$
	is an $i$-oper.  The $i$-opers $\Ll$ and $e^{\text{ad} \, w} (\Ll)$ are called {\it $i$-gauge equivalent.}
	A Miura oper is an $i$-oper for any $i$.

	\begin{thm}
		[{\cite[Proposition 3.10]{DS}}]
		\label{thm gaugemiura}
		If Miura opers $\Ll$ and $\tilde \Ll$ are $i$-gauge equivalent, then
		$\frak m_i(\Ll) = \frak m_i(\tilde\Ll)$.	
\qed

\end{thm}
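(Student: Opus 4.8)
The plan is to pass to the matrix realization of Section~\ref{Realizations of gA}, writing $\gA=\slt[\la,\la^{-1}]$ so that each Miura oper becomes a first order matrix differential operator $\Ll=\der+\La^{(1)}+V$, $V=\sum_{k=1}^{2n}v_ke_{k,k}$, acting on $\C^{2n}$-valued functions $\Psi=(\psi_1,\dots,\psi_{2n})$, and to characterize $\frak m_i(\Ll)=L_i$ through the scalar ODE satisfied by one gauge-distinguished component of the solutions of $\Ll\Psi=0$. Here $\La^{(1)}=\la e_{1,2n}+\sum_{j=1}^{2n-1}e_{j+1,j}$, so the system $\Ll\Psi=0$ reads $(\der+v_1)\psi_1+\la\psi_{2n}=0$ together with $(\der+v_k)\psi_k+\psi_{k-1}=0$ for $k=2,\dots,2n$. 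Since $\tilde\Ll=e^{\ad w}(\Ll)=e^w\Ll e^{-w}$ for some $w\in\Bb(\n^-_i)$, the substitution $\Psi\mapsto e^w\Psi$ is a linear isomorphism from the solution space of $\Ll\Psi=0$ onto that of $\tilde\Ll\tilde\Psi=0$; for fixed $\la\ne0$ both spaces are $2n$-dimensional.

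Next I would run the cyclic elimination based at the index $i$. Solving the lower equations gives $\psi_{k-1}=-(\der+v_k)\psi_k$, and chaining these down from $\psi_i$ to $\psi_1$, then using the top equation $(\der+v_1)\psi_1=-\la\psi_{2n}$ to close the loop, expresses every component as a differential polynomial in the single component $\psi_i$ and shows that $\psi_i$ is annihilated by the monic order-$2n$ operator $L_i^{*}-\la$, where $L_i^{*}$ is the formal adjoint of the operator in \Ref{miuramap} and $\la$ plays the role of the eigenvalue. Concretely, the product of factors $(\der+v_k)$ produced by the elimination, read in the cyclic order dictated by starting the chain at $i$, is exactly the product in \Ref{miuramap} after taking the formal adjoint (one checks for instance that for $i=1$, $2n=4$ the elimination yields $(\der+v_2)(\der+v_3)(\der+v_4)(\der+v_1)\psi_1=\la\psi_1$, the adjoint of $L_1=(\der-v_1)(\der-v_4)(\der-v_3)(\der-v_2)$). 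Because $\Psi\mapsto\psi_i$ is injective on solutions, $\{\psi_i:\Ll\Psi=0\}$ is precisely the $2n$-dimensional kernel of $L_i^{*}-\la$, and likewise $\{\tilde\psi_i:\tilde\Ll\tilde\Psi=0\}$ is the kernel of $\tilde L_i^{*}-\la$.

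The key structural point is that $\psi_i$ is exactly the component preserved by the gauge. The subalgebra $\n^-_i$ is generated by the $F_j$ with $j\ne i$; in the matrix picture these are the superdiagonal entries $e_{j,j+1}$ ($j\ne i$) together with the corner $\la^{-1}e_{2n,1}$, that is, the full cyclic superdiagonal with the arrow at position $i$ removed. Reindexing the basis of $\C^{2n}$ as $(i+1,i+2,\dots,2n,1,\dots,i)$ turns this cut chain into a genuinely strictly upper triangular pattern, so $e^w$ is upper unitriangular in this ordering and its last coordinate, namely $\psi_i$, is fixed: $\tilde\psi_i=\psi_i$ for matching solutions $\Psi$ and $\tilde\Psi=e^w\Psi$ (for $i=0$ the removed generator is the corner $F_0$ and no reindexing is needed). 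Hence the two $2n$-dimensional function spaces $\{\psi_i\}$ and $\{\tilde\psi_i\}$ coincide, and a monic differential operator of order $2n$ is determined by its $2n$-dimensional kernel; therefore $L_i^{*}-\la=\tilde L_i^{*}-\la$, whence $\frak m_i(\Ll)=L_i=\tilde L_i=\frak m_i(\tilde\Ll)$.

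The main obstacle I expect is the second step: verifying that the cyclic elimination based at $i$ reproduces precisely the factorization \Ref{miuramap} up to the formal adjoint, together with the bookkeeping showing that the gauge-preserved coordinate in the reindexed basis is the same index $i$ that labels the Miura map. Both facts hinge on matching the cyclic order of the factors $(\der-v_k)$ against the position of the omitted generator $F_i$, and on correctly tracking the corner term $\la e_{1,2n}$ of $\La^{(1)}$, which is what closes the cyclic chain and supplies the spectral parameter $\la$. Once this matching is in place, the solution-space comparison and the triangularity of $e^w$ make the invariance immediate.
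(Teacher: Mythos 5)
Your statement is one the paper does not prove at all --- it is imported from Drinfeld--Sokolov with the citation \cite[Proposition 3.10]{DS} and a \textsl{qed} --- so the only comparison available is with the standard argument, and yours is essentially that argument: characterize $\frak m_i(\Ll)$ through the auxiliary linear problem $\Ll\Psi=0$ and observe that the $i$-th component of $\Psi$ is fixed by $i$-gauge transformations. The computations check out. The cyclic elimination based at $i$ gives $(\der+v_{i+1})\cdots(\der+v_{2n})(\der+v_1)\cdots(\der+v_i)\,\psi_i=\la\psi_i$, and this operator is indeed the formal adjoint $L_i^{*}$ of the product in \Ref{miuramap}, since the sign $(-1)^{2n}$ coming from the $2n$ factors $(\der-v_k)^{*}=-(\der+v_k)$ equals $+1$; as $L\mapsto L^{*}$ is an involution on monic operators of order $2n$, the conclusion $L_i^{*}=\tilde L_i^{*}$ does yield $L_i=\tilde L_i$. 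The triangularity claim is also correct: every generator $e_{j,j+1}$ ($j\ne i$) and $\la^{-1}e_{2n,1}$ of $\n^-_i$ is strictly upper triangular in the ordering $(i+1,\dots,2n,1,\dots,i)$, hence so is every element of the Lie algebra they generate, so $e^{w}$ is unipotent upper triangular and fixes the last coordinate $\psi_i$ (with $i=0$ read as $i=2n$, consistent with $L_0=L_{2n}$).

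Two small points should be made explicit in a write-up, though neither is a gap. First, the injectivity of $\Psi\mapsto\psi_i$ on solutions uses $\la\ne 0$ (needed to recover $\psi_{2n}$ from $\la\psi_{2n}=-(\der+v_1)\psi_1$); a single nonzero value of $\la$ already suffices, since then $L_i^{*}-\tilde L_i^{*}$ is an operator of order at most $2n-1$ annihilating a $2n$-dimensional space. Second, the step ``a monic operator of order $2n$ is determined by its $2n$-dimensional kernel'' requires the Wronskian of a solution basis to be not identically zero; this holds for solution spaces of monic linear ODEs on domains avoiding the poles of the $v_k$, which is enough in this paper since all coefficients that occur are rational functions of $x$.
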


	\section{Tangent maps to Miura maps}
\label{tmaps}

\subsection{Tangent spaces}
Consider the spaces of Miura opers $\mc M(C^{(1)}_{n})\subset
\mc M(A^{(1)}_{2n})$. The tangent space to $\mc M(C^{(1)}_{n})$ at a point $\L$ is
\bean
\label{TM}
{}
\\
\notag
&&
T_\L \mc M(C^{(1)}_{n}) = \{X=\sum_{i=1}^{2n}X_ie_{i,i}\ |\
\sum_{i=1}^{2n}X_i =0, \ X_j+X_{2n+1-j}=0,\ j=1,\dots, n\},
\eean
where $X_i$ are functions of variable $x$.
	Recall $\D = \{  L=\der^{2n} + \sum\limits_{i=0}^{2n-2}u_i\der^{i}\}$.
The tangent space to $\D$ at a point $L$ is $T_L\D=\{Z= \sum_{i=0}^{2n-2}Z_i \der^{i}\}$,
where $Z_i$ are functions of $x$.

Consider the restrictions of Miura maps to $\mc M(C^{(1)}_{n})$ and the corresponding tangent maps
\bean
\label{Tmap}
d\frak m_i : T_\L \mc M(C^{(1)}_{n}) \to T_{\frak m_i(\L)} \D,  \quad i=1,\dots,2n.
\eean
By definition, if  $\L=\der +\La^{(1)}+\sum_{i=1}^{2n}v_ie_{i,i} \in \mc M(C^{(1)}_{n})$, $X=\sum_{i=1}^{2n}X_ie_{i,i}\in T_\L \mc M(C^{(1)}_{n})$, $d\frak m_i (X) = Z^i= \sum_{j=0}^{2n-2}Z_j^i \der^{j}$,
then
 \bean
	 \label{dif zero}
 Z^i &	& =  (-X_i)(\der-v_{i-1})\dots(\der-v_1)(\der-v_{2n})\dots(\der-v_{i+1})
\\
	 \notag
&&	 +\ (\der-v_i)(-X_{i-1})\dots(\der-v_1)(\der-v_{2n})\dots(\der-v_{i+1})+\dots
\\
	 \notag
&&	 +\ (\der-v_i)(\der-v_{i-1})\dots(-X_1)(\der-v_{2n})\dots(\der-v_{i+1})
\\
	 \notag
&&	 +\ (\der-v_i)(\der-v_{i-1})\dots(\der-v_1)(-X_{2n})\dots(\der-v_{i+1})+\dots
\\
	 \notag
&&	 +\ (\der-v_i)(\der-v_{i-1})\dots(\der-v_1)(\der-v_{2n})\dots(-X_{i+1}).
	 \eean

 In what follows we study the 
intersection of kernels of these tangent maps when $i$ runs through certain subsets of 
$\{1,\dots,2n\}$.
\subsection{Formula for the first  coefficient}

\begin{prop}
Let $\L=\der +\La^{(1)}+\sum_{i=1}^{2n}v_ie_{i,i} \in \mc M(A^{(1)}_{2n})$, $X=\sum_{i=1}^{2n}X_ie_{i,i}
$ $\in T_\L \mc M(C^{(1)}_{n})$, $d\frak m_i (X) = Z^i= \sum_{j=0}^{2n-2}Z_j^i \der^{j}$. Then
\bean
\label{1st coeff}
Z_{2n-2}^i = -\left(\sum_{k=1}^{2n}v_kX_k+\sum_{k=1}^{i}(i-k)X'_{k} +\sum_{k=i+1}^{2n}(i+2n-k)X'_{k}
\right).
\eean
\end{prop}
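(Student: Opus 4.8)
The plan is to reduce the whole computation to one elementary fact about ordered products of first-order scalar operators, and then to differentiate. First I would record the ordered factorization defining the Miura map,
\[
L_i = \prod_{\ell=1}^{2n}(\der - v_{\pi(\ell)}),
\]
where $\pi$ is the permutation of $\{1,\dots,2n\}$ listing the indices in the order $i,i-1,\dots,1,2n,2n-1,\dots,i+1$; explicitly $\pi(\ell)=i-\ell+1$ for $1\le \ell\le i$ and $\pi(\ell)=i+2n-\ell+1$ for $i<\ell\le 2n$.

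The heart of the argument is the expansion
\[
\prod_{\ell=1}^m(\der - c_\ell) = \der^m - \Big(\sum_\ell c_\ell\Big)\der^{m-1} + \Big(\sum_{\ell<\ell'} c_\ell c_{\ell'} - \sum_\ell (\ell-1)c_\ell'\Big)\der^{m-2} + \dots,
\]
valid for arbitrary functions $c_1,\dots,c_m$ of $x$. I would prove it by induction on $m$ using $\der\, c = c\,\der + c'$: multiplying the length-$(m-1)$ product on the left by the new leading factor and commuting $\der$ past the lower-order coefficients shifts every index by one and, crucially, each $c_\ell$ acquires a derivative term weighted by the number of factors standing to its left, which is exactly $\ell-1$. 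This is the step that must be done carefully, since the noncommutativity of $\der$ with the $c_\ell$ is precisely what produces the asymmetric coefficients $(\ell-1)$.

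Applying the expansion with $c_\ell = v_{\pi(\ell)}$ identifies the $\der^{2n-2}$ coefficient of $L_i$, and since $d\frak m_i$ is by \Ref{dif zero} the directional derivative obtained by replacing each factor $(\der - v_{\pi(\ell)})$ in turn by $(-X_{\pi(\ell)})$, I obtain
\[
Z_{2n-2}^i = \sum_{\ell<\ell'}\big(X_{\pi(\ell)}v_{\pi(\ell')}+v_{\pi(\ell)}X_{\pi(\ell')}\big) - \sum_\ell (\ell-1)X_{\pi(\ell)}'.
\]
The quadratic sum equals $\sum_{p\ne q}X_{\pi(p)}v_{\pi(q)} = \big(\sum_k X_k\big)\big(\sum_k v_k\big) - \sum_k X_k v_k$, and because $X\in T_\L\mc M(C^{(1)}_n)$ satisfies $\sum_k X_k=0$ by \Ref{TM}, it collapses to $-\sum_{k=1}^{2n} v_k X_k$.

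For the remaining sum I would invert $\pi$: an index $k\in\{1,\dots,i\}$ occupies position $\ell=i-k+1$, so $\ell-1=i-k$, whereas $k\in\{i+1,\dots,2n\}$ occupies position $\ell=i+2n-k+1$, so $\ell-1=i+2n-k$. Substituting turns $-\sum_\ell(\ell-1)X_{\pi(\ell)}'$ into $-\sum_{k=1}^i(i-k)X_k' - \sum_{k=i+1}^{2n}(i+2n-k)X_k'$, and combining with the previous paragraph yields \Ref{1st coeff}. Beyond the inductive expansion, the only genuine bookkeeping obstacle is correctly reading off the position $\ell$ of each index $k$ under the cyclic ordering $\pi$; once that is in hand, the simplification uses only the single linear constraint $\sum_k X_k=0$.
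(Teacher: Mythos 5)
Your proof is correct. It reaches \Ref{1st coeff} by a mildly different organization of the same elementary computation: the paper expands the $2n$ terms of \Ref{dif zero} one at a time, reads off the coefficient of $\der^{2n-2}$ from each, and simplifies the bilinear part using $\sum_k v_k=0$, whereas you first prove a general lemma giving the two subleading coefficients of an ordered product $\prod_{\ell}(\der-c_\ell)$ and then obtain $Z^i_{2n-2}$ by differentiating that coefficient in the direction $X$, collapsing the bilinear part with $\sum_k X_k=0$ instead. The two constraints are interchangeable here, since the bilinear term is $\bigl(\sum_k X_k\bigr)\bigl(\sum_k v_k\bigr)-\sum_k v_kX_k$ and either factor of the first product vanishes on $T_\L\mc M(C^{(1)}_n)$. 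What your route buys is a clean, reusable statement --- the position-weighted derivative terms $-(\ell-1)c_\ell'$ in the $\der^{m-2}$ coefficient, proved once by induction on the number of factors --- at the cost of having to invert the cyclic ordering $\pi$ explicitly; your bookkeeping there ($\ell-1=i-k$ for $k\le i$ and $\ell-1=i+2n-k$ for $k>i$) is correct and reproduces exactly the weights that appear in the paper's term-by-term expansion.
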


\begin{proof} The proof uses only the identity $\sum_{j=1}^{2n+1}v_j=0$ and is straightforward.
 \bea
 Z_{2n-2}^i &	& =  (-X_i)\left[-v_{i-1}-v_{i-2}-\dots-v_{1}-v_{2n}-\dots-v_{i+1}\right]
\\
	 \notag
&&	 +\ (-X_{i-1})'+(-X_{i-1})[-v_{i}-v_{i-2}-\dots-v_{1}-v_{2n}-\dots-v_{i+1}]
\\
	 \notag
&&	 +\ (i-1)(-X_1)'+(-X_1)[-v_{i}-v_{i-1}-\dots-v_{2}-v_{2n}-\dots-v_{i+1}]
\\
	 \notag
&&	 +\ (i)(-X_{2n})'+(-X_{2n})[-v_{i}-v_{i-1}-\dots-v_{2}-v_{1}-v_{2n-1}-\dots-v_{i+1}]
\\
	 \notag
&&	 +\ (2n-1)(-X_{i+1})'+(-X_{i+1})[-v_{i}-v_{i-1}-\dots-v_{1}-v_{2n}-\dots-v_{i+2}]=
\\
\notag
&&-\left(\sum_{k=1}^{2n}v_kX_k+\sum_{k=1}^{i}(i-k)X'_{k} +\sum_{k=i+1}^{2n}(i+2n-k)X'_{k}\right).
 \eea

\end{proof}

Notice that 
\bea
\sum_{k=1}^{2n}v_kX_k=2\sum_{k=1}^{n}v_kX_k.
\eea

\subsection{Intersection of kernels of $d\frak m_i$}

\begin{lem}
\label {lem 6.10}

Let $\L=\der +\La^{(1)}+\sum_{k=1}^{2n}v_k e_{k,k} \in \mc M(C^{(1)}_{n})$, $X=\sum_{k=1}^{2n}X_ke_{k,k}$
$\in T_\L \mc M(C^{(1)}_{n})$, $d\frak m_i (X) = Z^i= \sum_{j=0}^{2n-2}Z_j^i \der^{j}$. Assume that
$Z^i_{2n-2}=0$ for $i=1,\dots,2n-1$, then
\bean
\label{eqn j0}
X'_1-2v_{1}X_{1}=2\sum_{k=2}^{n}v_kX_k,
\qquad
X_i'=0, \quad i=2,\dots, 2n-1.
\eean
\end{lem}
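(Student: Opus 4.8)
The plan is to treat the hypothesis as a linear system of $2n-1$ scalar equations in the unknowns $X_k$ and their derivatives, and to extract the two assertions by taking differences of consecutive equations. By the Proposition giving the first coefficient, the condition $Z^i_{2n-2}=0$ reads
\[
S + T_i = 0, \qquad i=1,\dots,2n-1,
\]
where I abbreviate $S=\sum_{k=1}^{2n}v_kX_k$ (which does not depend on $i$) and $T_i=\sum_{k=1}^{i}(i-k)X'_k+\sum_{k=i+1}^{2n}(i+2n-k)X'_k$. The point is that $S$ is a common constant across all $2n-1$ equations, so the family $\{T_i\}$ is constant in $i$; comparing neighboring indices should isolate individual derivatives.

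First I would establish the second assertion, $X'_i=0$ for $i=2,\dots,2n-1$, by computing $T_{i+1}-T_i$ coefficient by coefficient. Reading off the coefficient of each $X'_k$ in $T_i$ (namely $i-k$ for $k\le i$ and $i+2n-k$ for $k\ge i+1$), the difference of coefficients is $+1$ for every $k\ne i+1$ and is $-(2n-1)$ for $k=i+1$. Hence
\[
T_{i+1}-T_i=\sum_{k=1}^{2n}X'_k-2n\,X'_{i+1}.
\]
Now the tangent-space constraint $\sum_{k=1}^{2n}X_k=0$ from \Ref{TM} gives $\sum_{k=1}^{2n}X'_k=0$, so $T_{i+1}-T_i=-2n\,X'_{i+1}$. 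Since $T_i=-S=T_{i+1}$ for $i=1,\dots,2n-2$, each such difference vanishes, yielding $X'_{i+1}=0$ for $i+1=2,\dots,2n-1$, exactly the claimed relations.

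For the first identity I would then return to the single equation $S+T_1=0$ and feed in the vanishing derivatives just obtained. In $T_1=\sum_{k=2}^{2n}(1+2n-k)X'_k$ every term with $k=2,\dots,2n-1$ dies, leaving only $T_1=X'_{2n}$; the $\mc M(C^{(1)}_n)$ symmetry $X_{2n}=-X_1$ from \Ref{TM} converts this to $T_1=-X'_1$. Using the remark that $S=2\sum_{k=1}^{n}v_kX_k$, the equation $S+T_1=0$ becomes $X'_1=2\sum_{k=1}^{n}v_kX_k$, which is the asserted $X'_1-2v_1X_1=2\sum_{k=2}^{n}v_kX_k$ after splitting off the $k=1$ term.

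I do not expect a genuine obstacle here: the whole argument is linear and the only real care needed is the bookkeeping of the coefficients in $T_{i+1}-T_i$, where one must handle the single index $k=i+1$ separately from the generic $+1$ shift. The clean idea that makes everything collapse is that differencing consecutive equations kills the common constant $S$ and, combined with $\sum_kX'_k=0$, produces a single derivative $X'_{i+1}$ up to the scalar $2n$; the symmetry relations of $T_\L\mc M(C^{(1)}_n)$ then package the residual $i=1$ equation into the stated form.
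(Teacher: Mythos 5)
Your proof is correct and follows essentially the same route as the paper: both difference consecutive equations of the system $Z^i_{2n-2}=0$ (using $\sum_{k=1}^{2n}X'_k=0$ to isolate a single $X'_{i+1}$ up to the factor $2n$), then substitute the vanishing derivatives into the $i=1$ equation together with $X_{2n}=-X_1$ and $\sum_k v_kX_k=2\sum_{k=1}^{n}v_kX_k$. The only difference is notational (your uniform coefficient bookkeeping via $T_{i+1}-T_i$ versus the paper's explicit written-out system), so nothing further is needed.
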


	\begin{proof}
		By assumption we have the system of equations	
\bean
	\label{dif coeff}
&&
	X'_{2n-2}+2X'_{2n-3}+\dots+(2n-2)X'_{1}+(2n-1)X'_{2n}+\sum_{k=1}^{2n}v_kX_k=0,
\\
	\notag
&&
	X'_{2n-3}+2X'_{2n-4}+\dots+(2n-2)X'_{2n}+(2n-1)X'_{2n-1}+\sum_{k=1}^{2n}v_kX_k=0,\\
	\notag
&&
	X'_{2n-4}+2X'_{2n-5}+\dots+(2n-2)X'_{2n-1}+(2n-1)X'_{2n-2}+\sum_{k=1}^{2n}v_kX_k=0,\\
	\notag
&&
	\dots 
\\
	\notag
&&
	X'_{1}+2X'_{2n}+\dots+(2n-2)X'_{4}+(2n-1)X'_{3}+\sum_{k=1}^{2n}v_kX_k=0,\\
	\notag
&&
	X'_{2n}+\dots+(2n-2)X'_{3}+(2n-1)X'_{2}+\sum_{k=1}^{2n}v_kX_k=0.
	\eean
By subtracting the first equation from the second we get
$	(2n-1)X'_{2n-1}-X'_{2n-2}-X'_{2n-3}-\dots-X'_{1}-X'_{2n}=0$,
	equivalently
$	2n\, X'_{2n-1}-\sum_{k=1}^{2n}X'_k=0$.  
	Since $\sum_{k=1}^{2n}X_k=0$, we get $X'_{2n-1} = 0$. By subtracting the second from the third we get $X'_{2n-2}=0$. Similarly we obtain
	\bean
	\label{diff res0}
	X'_i = 0,
\quad i=2,\dots,2n-1.
	\eean
	Applying (\ref{diff res0}) to the last equation in (\ref{dif coeff}) yields 
\bea
X'_{2n}+\sum_{k=1}^{2n}v_kX_k=X'_{2n}+2\sum_{k=1}^{n}v_kX_k=0.
\eea
	By pulling out the term for $k=1$ we obtain
\bea
X'_{2n}+2v_1X_1+2\sum_{k=2}^{n}v_kX_k=-X'_1+2v_{1}X_{1}+2\sum_{k=2}^{n}v_kX_k=0.
\eea
	\end{proof}

\begin{lem}
\label {lem 6.16}
Let $j\in \{1,\dots,n-1\}$.
Let $\L=\der +\La^{(1)}+\sum_{k=1}^{2n}v_k e_{k,k} 
\in \mc M(C^{(1)}_{n})$, $X=\sum_{k=1}^{2n}X_ke_{k,k}
$ $\in T_\L \mc M(C^{(1)}_{n})$, $d\frak m_i (X) = Z^i= \sum_{j=0}^{2n-2}Z_j^i \der^{j}$. Assume that
$Z^i_{2n-2}=0$ for all $i\notin\{ j, 2n-j\}$, then
\bea
					X'_{j}+v_{j}X_{j}+v_{j+1}X_{j+1}
					=-\sum_{k=1,k\neq j,j+1}^{n}v_kX_k,
\qquad  X'_j+X'_{j+1}=0,
\qquad  X'_i =0
					\eea
for $i\notin \{j, j+1, 2n-j, 2n+1-j\}$.
\end{lem}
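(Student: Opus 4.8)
The plan is to follow the strategy of Lemma~\ref{lem 6.10}, while tracking carefully how that argument degrades once two of the defining equations are removed. First I would rewrite each hypothesis $Z^i_{2n-2}=0$ using \Ref{1st coeff}: for each available index it becomes a linear relation
\[
\sum_{k=1}^{2n} c_{i,k}\,X'_k + \sum_{k=1}^{2n} v_k X_k = 0,
\]
where $c_{i,k}$ is the representative of $i-k$ modulo $2n$ lying in $\{0,1,\dots,2n-1\}$ (so $c_{i,k}=i-k$ for $k\le i$ and $c_{i,k}=i+2n-k$ for $k>i$). This exhibits the system as a cyclic family of equations indexed by $i\in\{1,\dots,2n\}$, of which we are given all but the two with $i=j$ and $i=2n-j$.

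Next I would reproduce the telescoping step: subtracting the $(i-1)$-st equation from the $i$-th cancels the common potential term $\sum_k v_k X_k$, and since the coefficients $c_{i,k}$ increase by $+1$ in $i$ except for a single wrap-around at $k=i$, the difference collapses (using $\sum_{k} X'_k=0$, which follows from $\sum_k X_k=0$) to $X'_i=0$. This deduction is available exactly when both neighbouring equations are present, hence for every $i\notin\{j,j+1,2n-j,2n+1-j\}$, which gives the third assertion.

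The two remaining assertions require genuinely new steps. To recover $X'_j+X'_{j+1}=0$ I would form the \emph{skip} difference of the $(j+1)$-st and $(j-1)$-st equations, both of which are present, bridging the missing $j$-th one; here the coefficient differences equal $2$ except for wrap-arounds at $k=j$ and $k=j+1$, so after using $\sum_k X'_k=0$ the relation reduces to $-2n(X'_j+X'_{j+1})=0$. Finally, combining this with the differentiated symmetry constraints $X'_k+X'_{2n+1-k}=0$ coming from the description \Ref{TM} of $T_\L\mc M(C^{(1)}_{n})$ leaves a single free derivative $X'_j$, with $X'_{j+1}=X'_{2n+1-j}=-X'_j$ and $X'_{2n-j}=X'_j$. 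Substituting this into one surviving equation, where $i=j+1$ is the convenient choice because its four relevant coefficients $c_{j+1,j}=1$, $c_{j+1,j+1}=0$, $c_{j+1,2n-j}=2j+1$, $c_{j+1,2n+1-j}=2j$ combine to give $2X'_j$, and invoking $\sum_{k=1}^{2n}v_kX_k=2\sum_{k=1}^{n}v_kX_k$, produces the potential relation $X'_j+v_jX_j+v_{j+1}X_{j+1}=-\sum_{k=1,k\neq j,j+1}^{n}v_kX_k$.

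The main obstacle I anticipate is the bookkeeping of the cyclic coefficients $c_{i,k}$ and the exact location of the wrap-arounds, in particular checking that the skip-difference and the chosen substitution equation remain valid at the cyclic boundary cases $j=1$ and $j=n-1$, where $i-1$ may wrap to $2n$ or the excluded indices crowd together. The parity obstruction $2j+1\neq 2n$ is what guarantees that $j+1$ is never one of the two missing indices, so the substitution equation is always genuinely available.
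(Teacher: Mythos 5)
Your proof is correct and follows essentially the same route as the paper's: consecutive differences of the equations $Z^i_{2n-2}=0$ yield $X'_i=0$ away from the missing indices, a skip difference bridging a missing index yields $X'_j+X'_{j+1}=0$, and substitution into a surviving equation together with $\sum_{k=1}^{2n}v_kX_k=2\sum_{k=1}^{n}v_kX_k$ gives the first identity. The only immaterial differences are that the paper takes the skip difference around $i=2n-j$ (then transfers to $j$ via the symmetry $X_k+X_{2n+1-k}=0$) and substitutes into the equation for $i=2n$ rather than $i=j+1$.
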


\begin{proof}
By assumption we have the system of equations
			\bea
&&			X'_{2n-1}+2X'_{2n-2}+\dots+(2n-2)X'_{2}+(2n-1)X'_{1}+\sum_{k=1}^{2n}v_kX_k=0,\\
			\notag
	&&		X'_{2n-2}+2X'_{2n-3}+\dots+(2n-2)X'_{1}+(2n-1)X'_{2n}+\sum_{k=1}^{2n}v_kX_k=0,\\
			\notag
		&&	\dots \\
		&&	\notag
			X'_{2n-j}+\dots+(2n-j)X'_{1}+(2n+1-j)X'_{2n}+\dots
\\
&&
\phantom{aaaaaaaaaaaaaaaaaaaaaaaaaaaaaaaaaaaa}
\dots +(2n-1)X'_{2n+2-j}+\sum_{k=1}^{2n}v_kX_k=0,\\
		&&	\notag
			X'_{2n-2-j}+\dots+(2n-2-j)X'_{1}+(2n-1-j)X'_{2n}+\dots+(2n-1)X'_{2n-j}+\sum_{k=1}^{2n}v_kX_k=0,\\
		\notag
	&&		\dots \\
		&&	\notag
			X'_{j}+\dots+j X'_{1}+(j+1)X'_{2n}+\dots+(2n-1)X'_{j+2}+\sum_{k=1}^{2n}v_kX_k=0,\\
		&&	\dots 
		\eea
			Subtracting the second line from the first gives  $ X'_{2n}=0$,
cf. the proof of Lemma \ref{lem 6.10}.
Similarly, for $i\notin\{ j,j+1,2n-j,2n+1-j\}$  considering the difference $Z^{i-1}_{2n-2} - Z^i_{2n-2}=0$
we obtain $X'_i=0$.

Considering the difference  $Z^{2n+1-j}_{2n-2} - Z^{2n-1-j}_{2n-2}=0$ we obtain
 \bea
&&			X'_{2n-j}+\dots+(2n-j)X'_{1}+(2n+1-j)X'_{2n}+\dots+(2n-1)X'_{2n+2-j}
+\sum_{k=1}^{2n}v_kX_k
\\
&&			-\Big(X'_{2n-2-j}+\dots+(2n-2-j)X'_{1}+(2n-1-j)X'_{2n}+\dots+(2n-1)X'_{2n-j}
+\sum_{k=1}^{2n}v_kX_k\Big)
\\
	&&		=-(2n)(X'_{2n-j}+X'_{2n+1-j})+2\sum_{k=1}^{2n}X'_k=0.
			\eea
Hence $X'_{2n-j}+X'_{2n+1-j}=0$ and $X'_{j}+X'_{j+1}=0$.
			Now we can rewrite equation $Z^{2n}_{2n-2}=0$ as
	\bea
			(j-1)X'_{2n+1-j}+(j)X'_{2n-j}+(2n-1-j)X'_{j+1}+(2n-j)X'_{j}+\sum_{k=1}^{2n}v_kX_k=0.
			\eea
			Or equivalently
			\bea
			2X'_{j}+\sum_{k=1}^{2n}v_kX_k =
			2X'_{j}+2\sum_{k=1}^{n}v_kX_k = 
		2(	X'_{j}+v_{j}X_{j}+v_{j+1}X_{j+1}+\sum_{k=1,k\neq j,j+1}^{n}v_kX_k) =0.
			\eea
			\end{proof}

\begin{lem}
\label {lem 6.20}
Let $\L=\der +\La^{(1)}+\sum_{k=1}^{2n}v_k e_{k,k} 
\in \mc M(C^{(1)}_{n})$, $X=\sum_{k=1}^{2n}X_ke_{k,k}$
$\in T_\L \mc M(C^{(1)}_{n})$, $d\frak m_i (X) = Z^i= \sum_{j=0}^{2n-2}Z_j^i \der^{j}$. Assume that
$Z^i_{2n-2}=0$ for all $i\neq n$, then
\bea
					X'_{n}+2v_{n}X_{n}=-2\sum_{k=1}^{n-1}v_kX_k,
\qquad  X'_i =0, \quad i\notin \{n, n+1\}.
					\eea
\end{lem}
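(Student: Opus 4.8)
The plan is to follow the template of the proofs of Lemmas \ref{lem 6.10} and \ref{lem 6.16}: write out the linear system coming from the hypothesis $Z^i_{2n-2}=0$, eliminate most of the $X'_i$ by subtracting consecutive equations, and then read off the one surviving relation from a single equation. First I would record, from formula \Ref{1st coeff}, that each equation $Z^i_{2n-2}=0$ reads $S+C^i=0$, where $S=\sum_{k=1}^{2n}v_kX_k$ is independent of $i$ and $C^i=\sum_{k=1}^i (i-k)X'_k+\sum_{k=i+1}^{2n}(i+2n-k)X'_k$. Using the tangent space conditions $v_j+v_{2n+1-j}=0$ and $X_j+X_{2n+1-j}=0$, the products $v_kX_k$ are invariant under $k\mapsto 2n+1-k$, so $S=2\sum_{k=1}^n v_kX_k$; I would note this at the outset since it reappears in the final answer.

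Next I would compute the difference of two consecutive equations. A direct calculation gives $C^i-C^{i-1}=-(2n-1)X'_i+\sum_{k\ne i}X'_k$, and since $\sum_{k=1}^{2n}X_k=0$ forces $\sum_k X'_k=0$, this collapses to $C^i-C^{i-1}=-2n\,X'_i$. Hence, whenever both indices $i-1,i$ lie in the index set $\{1,\dots,2n\}\setminus\{n\}$ read cyclically (the equality $L_0=L_{2n}$ supplies the wrap-around relation $C^1-C^{2n}=-2n\,X'_1$), subtracting the two corresponding equations yields $X'_i=0$. Removing only the single index $n$ breaks the cyclic chain of consecutive pairs at exactly the two pairs $(n-1,n)$ and $(n,n+1)$, so this elimination produces $X'_i=0$ for every $i\notin\{n,n+1\}$, which is the second assertion.

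Finally, to get the surviving relation I would use the identity $X'_n+X'_{n+1}=0$. Here this comes for free from the tangent space condition $X_n+X_{n+1}=0$, which is precisely the $j=n$ instance of $X_j+X_{2n+1-j}=0$; this is the simplification that distinguishes the central index $n$ from the off-center case of Lemma \ref{lem 6.16}, where the analogous relation had to be extracted from a difference of equations. Substituting $X'_i=0$ for $i\notin\{n,n+1\}$ into the single remaining equation $Z^{2n}_{2n-2}=0$ leaves $2\sum_{k=1}^n v_kX_k+nX'_n+(n-1)X'_{n+1}=0$; replacing $X'_{n+1}$ by $-X'_n$ makes the two derivative terms combine into $X'_n$, giving $X'_n+2\sum_{k=1}^n v_kX_k=0$, that is $X'_n+2v_nX_n=-2\sum_{k=1}^{n-1}v_kX_k$.

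The computation is the same routine linear algebra as in the two preceding lemmas, so there is no serious obstacle; the only places demanding care are the bookkeeping of the cyclic index structure — in particular not forgetting the wrap-around difference that yields $X'_1=0$ — and verifying that the coefficients $n$ and $n-1$ in the last equation collapse correctly once $X'_n+X'_{n+1}=0$ is used.
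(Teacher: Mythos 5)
Your proof is correct and follows essentially the same route as the paper's: subtract consecutive equations $Z^i_{2n-2}=0$ (using $\sum_k X'_k=0$) to kill $X'_i$ for $i\notin\{n,n+1\}$, then substitute into $Z^{2n}_{2n-2}=0$ and use $X'_{n+1}=-X'_n$ (from the tangent-space condition $X_n+X_{n+1}=0$) to collapse $nX'_n+(n-1)X'_{n+1}$ to $X'_n$. Your packaging of the subtraction step as the single identity $C^i-C^{i-1}=-2n\,X'_i$ is just a tidier presentation of the paper's line-by-line elimination, and your explicit remark that $X'_n+X'_{n+1}=0$ comes for free here (unlike in Lemma \ref{lem 6.16}) makes visible a step the paper uses silently.
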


\begin{proof}
By assumption we have the system of equations
		\bea
&&		X'_{2n-1}+2X'_{2n-2}+\dots+(2n-2)X'_{2}+(2n-1)X'_{1}+\sum_{k=1}^{2n}v_kX_k=0,\\
		\notag
	&&	X'_{2n-2}+2X'_{2n-3}+\dots+(2n-2)X'_{1}+(2n-1)X'_{2n}+\sum_{k=1}^{2n}v_kX_k=0,\\
		\notag
	&&	\dots \\
	&&	\notag
		X'_{n}+\dots+(n)X'_{1}+(n+1)X'_{2n}+\dots+(2n-1)X'_{n+2}+\sum_{k=1}^{2n}v_kX_k=0,\\
	&&	\notag
		X'_{n-2}+\dots+(n-2)X'_{1}+(n-1)X'_{2n+1}+\dots+2nX'_{n}+\sum_{k=1}^{2n+1}v_kX_k=0,\\
&&		\notag
		\dots \\
	&&	\notag
		X'_{2n}+2X'_{2n-1}+\dots+(2n-1)X'_{2}+\sum_{k=1}^{2n}v_kX_k=0.
		\eea
		Subtracting the second line from the first gives $X'_{2n}=0$, cf. the proof of Lemma  \ref{lem 6.10}. 
Similarly, for $i\notin\{ n, n+1 \}$  considering the difference $Z^{i-1}_{2n-2} - Z^i_{2n-2}=0$
we obtain $X'_i=0$.

Now we can rewrite equation $Z^{2n}_{2n-2}=0$ as 
		\bea
&&		(n-1)X'_{n+1}+(n)X'_{n}+\sum_{k=1}^{2n}v_kX_k
=	
X'_{n}+2v_{n}X_{n} +2\sum_{k=1}^{n-1}v_kX_k=0.
		\eea
	\end{proof}

	\section{Critical points of master functions and generation of tuples of polynomials}
	\label{sec gene}
	In this section we follow \cite{MV1}.
	For functions $f(x),g(x)$,  we denote
	\bea
	\Wr(f,g) = f(x)g'(x)-f'(x)g(x)
	\eea
	the Wronskian determinant, and $f'(x):=\frac{df}{dx}(x)$.

	\subsection{Master function}
\label{mM}
		Choose nonnegative integers ${k} = (k_0,k_1,\dots, k_{n})$.
		Consider variables $ u = (u_i^{(j)})$, where $j = 0,1,\dots, n$ and $i = 1,\dots,k_j$.
		The {\it master function} $\Phi(u;  k)$
		is defined by the formula:
				\bean
\label{Master}
		\Phi(u, k)
& =& 		2\sum_{i<i'} \ln (u^{(0)}_i-u^{(0)}_{i'})			+4\sum_{j=1}^{n-1} \sum_{i<i'} 
		\ln (u^{(j)}_i-u^{(j)}_{i'})
		\\
\notag
	&+	& 
		  2\sum_{i<i'} 
		\ln (u^{(n)}_i-u^{(n)}_{i'}) -2\sum_{j=0}^{n-1} \sum_{i,i'} 
		\ln (u^{(j)}_i-u^{(j+1)}_{i'}).
		\eean
	The product of symmetric groups
	$\Sigma_{\bs k}=\Si_{k_0}\times \Si_{k_1} \times \dots \times \Si_{k_{n}}$ acts on the set of variables
	by permuting the coordinates with the same upper index. The  function $\Phi$ is symmetric with respect to the $\Si_{\bs k}$-action.
	A point $u$ is a {\it critical point} if $d\Phi=0$ at $u$. In other words, $u$ is a critical point if and only if the following expressions
	 equal zero: 
	\bean
	\label{bethe eqn}
	&&
\phantom{aaaa}
\sum_{l=1}^{k_{1}}\frac{-2}{u_{j}^{(0)}-u_{l}^{(1)}} + 
	\sum_{s \neq j} \frac{2}{u_{j}^{(0)}-u_{s}^{(0)}}, \qquad j=1,\dots,k_0, 
	\\
	\notag
&&
	\sum_{l=1}^{k_{i-1}}\frac{-2}{u_{j}^{(i)}-u_{l}^{(i-1)}} + \sum_{l=1}^{k_{i+1}}\frac{-2}{u_{j}^{(i)}-u_{l}^{(i+1)}}+ 
	\sum_{s \neq j} \frac{4}{u_{j}^{(i)}-u_{s}^{(i)}}, \quad i = 1, \dots, n-1, \ j=1,\dots,k_i,
	\\
	\notag
	\\
	\notag
	&&
\sum_{l=1}^{k_{n-1}}\frac{-2}{u_{j}^{(n)}-u_{l}^{(n-1)}} + 
	\sum_{s \neq j} \frac{2}{u_{j}^{(n)}-u_{s}^{(n)}}, \qquad j=1,\dots,k_n. 
	\eean
	All the orbits have the same cardinality $\prod_{i=0}^n k_i!$\ .	
	We do not make distinction between critical points in the same orbit.

\begin{rem}
The definition of master functions can be found in \cite{SV}, see also \cite{MV1, MV2}.
The master functions $\Phi(u, k)$ in \Ref{Master}
are associated with the Kac-Moody algebra with Cartan matrix of type
		\beq
\label{A}
		A=(a_{i,j})=\left(
		\begin{matrix}
			2 & -2  & 0 & 0 & \dots & \dots & 0\\
			-1  & 2  & -1  & 0 & \dots & \dots &\dots &\\
			0 & -1  & 2 & -1 & \dots & \dots & \dots \\
			0 & 0 & -1 & \dots & \dots & \dots & \dots \\
			\dots & \dots & \dots & \dots &2 & -1 & 0 \\
			\dots & \dots & \dots & \dots & -1 & 2 & -1 \\
			0 & \dots & \dots & \dots & 0 & -2 & 2
		\end{matrix}
		\right),
		\eeq
		 which is dual to the Cartan matrix $\AT$, see  this type of Langlands duality in \cite{MV1, MV2, VWW}.
\end{rem}
	
	\subsection{Polynomials representing critical points}
	\label{PRCP}
	
	Let $u = (u_i^{(j)})$ be a critical point of the master function
	$\Phi$.
	Introduce the $(n+1)$-tuple of polynomials $ y= (y_{0}(x),\dots,y_{n}(x))$,
	\bean\label{y}
	y_j(x)\ =\ \prod_{i=1}^{k_j}(x-u_i^{(j)}).
	\eean
	This tuple of polynomials defines a point in the direct product
	$(\C[x])^{n+1}$.
	We say that the tuple {\it represents the
		critical point}.

		Each polynomial of the tuple will be considered up to multiplication
	by a nonzero number.

It is convenient to think that the $(n+1)$-tuple ${y}^\emptyset = (1, \dots, 1)$ of constant polynomials
	represents  in $(\C[x])^{n+1}$, the critical point of the master function with no variables.
	This corresponds to the case  $ k = (0, \dots, 0)$.
	
	We say that a given tuple $ y\in (\C[x])^{n+1}$ is {\it generic} if each polynomial $y_i(x)$ has no multiple roots
 and for $i=0,\dots,n-1$
	the polynomials  $y_i(x)$ and
	$y_{i+1}(x)$ have no common roots. If a tuple represents a critical point, then it is generic,
	see \Ref{bethe eqn}.  For example, the tuple  ${y}^\emptyset $ is generic.

	\subsection{Elementary generation}
	\label{Elementary generation}
An $(n+1)$-tuple is called {\it fertile} if  there exist polynomials $\tilde y_0,\dots, \tilde y_n\in (\C[x])^{n+1}$ such that
	\bean
	\label{Wr-Cr}
	\Wr(\tilde{y_j},y_j) = \prod_{i\ne j}y_i^{-a_{i,j}} , \qquad j=0,1,\dots, n,
	\eean
where $a_{i,j}$ are the entries of the Cartan matrix of type $C^{(1)}_{n}$, that is,
\bean
\label{Eqn}
&&
	\Wr(\tilde{y_0},y_0) = y_{1}^2,\qquad
	\Wr(\tilde{y_i},y_i) = y_{i-1}y_{i+1},\qquad i=1,\dots,n-1,
	\\
\notag
&&
\phantom{aaaaaaaaacaaaaaaa} 
	\Wr(\tilde{y}_{n},y_n) = y_{n-1}^2. 
	\eean

For  example, $y^\emptyset$ is fertile and $\tilde y_j=x+c_j,$ where the $c_j$ are arbitrary numbers.
\smallskip

Assume that an $(n+1)$-tuple of polynomials $y=(y_0,\dots,y_n)$ is fertile.
Equations \Ref{Wr-Cr} are linear  first order inhomogeneous differential equations with respect to $\tilde y_i$.
	 The solutions are
	\bean
	\label{deG 0}
	&
	\tilde y_0 = y_0\int \frac{y_1^{2}}{y_0^2} dx + c_{0} y_0,\\
	\label{deG i}
	&
	\tilde y_i = y_i\int \frac{y_{i-1}y_{i+1}}{y_i^2} dx + c_{i} y_i, \qquad i=1,\dots,n-1,\\
	\label{deG n}
	&
	\tilde y_n = y_n\int \frac{y_{n-1}^2}{y_n^2} dx + c_{n} y_n,
	\eean
	where $c_0,\dots,c_n$ are arbitrary numbers.
	For each $i=0,\dots,n$, the tuple
	\bean
	\label{simple i}
\phantom{aaaa}
	y^{(i)}(x,c_{i}) = (y_0(x), \dots, y_{i-1}(x), \tilde y_i(x,c_{i}),y_{i+1}(x),\dots, y_{n}(x))
	\ \in  \ (\C[x])^{n+1} \
	\eean
	forms a one-parameter family.  This family  is called
	the {\it generation  of tuples  from $ y$ in the $i$-th direction}.
	A tuple of this family is called an  immediate descendant of $ y$ in the $i$-th direction.
	
	\begin{thm}
		[\cite{MV1}]
		\label{fertile cor}
		${}$
		
		\begin{enumerate}
			\item[(i)]
			A generic tuple $y = (y_0, \dots, y_n)$, $\deg y_i=k_i$,
			represents a critical point of the master function
$\Phi(u; k)$
			if and only if $y$ is fertile.
			
			\item[(ii)] If $ y$ represents a critical point,
			then for any $c\in\C$ the tuple $y^{(j)}(x,c)$, $j=0,\dots,n$ is fertile.
			
			\item[(iii)]
			If $y$ is generic and fertile, then for almost all values of the parameter
			$c\in \C$ the tuple $y^{(j)}(x,c)$ is generic.
			The exceptions form a finite set in $\C$.

			\item[(iv)]
			
			Assume that a sequence $y_{[\ell]}$, $\ell = 1,2,\dots$, of fertile tuples
		has a limit ${y}_{[\infty]}$ in $(\C[x])^{n+1}$ as $\ell$ tends to infinity.
			\begin{enumerate}
				\item[(a)]
				Then the limiting tuple ${y}_{[\infty]}$ is fertile.
				\item[(b)]
				For $j = 0,\dots, n$, let ${y}_{[\infty]}^{(j)}$ be an immediate
				descendant of ${y}_{[\infty]}$.
				Then for all $j$ there exist immediate descendants
				${y}_{[\ell]}^{(j)}$ of ${y}_{[\ell]}$ such that ${y}_{[\infty]}^{(j)}$
				is the limit of ${y}_{[\ell]}^{(j)}$ as $\ell$ tends to infinity.
				
			\end{enumerate}
			\qed
			
		\end{enumerate}
	\end{thm}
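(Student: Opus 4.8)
The plan is to reduce every assertion to a single mechanism: the polynomiality of the first–order solutions written in \Ref{deG 0}, which is controlled by a residue computation. For part (i), fix a generic tuple $y$ and a direction $j$. Genericity guarantees that $y_j$ has only simple roots and that the right–hand side $T_j:=\prod_{i\ne j}y_i^{-a_{i,j}}$ of \Ref{Wr-Cr} does not vanish at those roots. Hence the integrand $T_j/y_j^2$ in \Ref{deG 0} has double poles exactly at the roots of $y_j$, and the integral is rational—so that $\tilde y_j$ is a genuine polynomial after the simple zero of $y_j$ absorbs the resulting simple pole—iff every such pole has vanishing residue. Writing $y_j=(x-u^{(j)}_s)g$ with $g(u^{(j)}_s)\ne0$, the residue at $u^{(j)}_s$ is proportional to $\bigl(T_j'/T_j-2\,g'/g\bigr)(u^{(j)}_s)$; expanding $T_j'/T_j=-\sum_{i\ne j}a_{i,j}\,y_i'/y_i$ and $g'/g=\sum_{s'\ne s}(u^{(j)}_s-u^{(j)}_{s'})^{-1}$, I would check that this equals, up to an overall nonzero factor, the $s$-th component of \Ref{bethe eqn} for the $j$-th group. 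Thus all $\tilde y_j$ exist as polynomials iff all equations \Ref{bethe eqn} vanish, i.e. iff $y$ is fertile iff $y$ is a critical point; a Wronskian degree count fixes $\deg\tilde y_j$.

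For part (ii), observe that the descendant $y^{(j)}(x,c)$ in \Ref{simple i} differs from $y$ only in the $j$-th slot. Direction $j$ is automatically fertile for the new tuple, since $\Wr(y_j,\tilde y_j)=-\Wr(\tilde y_j,y_j)=-T_j$ recovers (up to sign) the required right–hand side, so $y_j$ itself is a polynomial witness. For a neighbor direction $i$ with $a_{i,j}\ne0$ the right–hand side of \Ref{Wr-Cr} now carries $\tilde y_j$ in place of $y_j$; here I would invoke a three–term (Pl\"ucker–type) Wronskian identity to produce the new witness explicitly from the old witness $\tilde y_i$ (available because $y$ is fertile) together with $\tilde y_j$, and verify that the result is polynomial. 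For part (iii), I treat $\tilde y_j(x,c)=\bar y_j(x)+c\,y_j(x)$ as a pencil of fixed degree; the failure of genericity is the vanishing of the discriminant of $\tilde y_j$ or of a resultant with $y_{j-1}$ or $y_{j+1}$, each a polynomial in $c$. I would argue these polynomials are not identically zero—by inspecting a leading coefficient or exhibiting one admissible value of $c$—so each vanishes on a finite set, proving (iii).

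For part (iv)(a), fertility is a closed condition. Along a convergent sequence $y_{[\ell]}\to y_{[\infty]}$ whose members (and limit) have the same degrees, the degrees of the witnesses $\tilde y_{j,[\ell]}$ are determined by the degrees of the $y_i$, so after normalizing the free constants $c_j$ these witnesses lie in a fixed finite–dimensional space of polynomials; passing to a convergent subsequence and using that the Wronskian relations \Ref{Wr-Cr} are polynomial, hence continuous, I pass them to the limit and conclude $y_{[\infty]}$ is fertile. For (iv)(b), I would use the continuity of the generation procedure: away from the closed genericity–failure locus the rational integral $\int T_j/y_j^2\,dx$ and the family parameter $c$ depend continuously on the data, so, matching the limiting parameter $c_{[\infty]}$ and perturbing, I obtain immediate descendants $y^{(j)}_{[\ell]}$ of $y_{[\ell]}$ converging to the prescribed $y^{(j)}_{[\infty]}$.

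I expect part (ii) to be the main obstacle: preserving fertility under generation requires the precise Wronskian identities that propagate a change in the $j$-th slot into polynomial solvability of the defining equations of the \emph{neighboring} slots, and this is the genuinely algebraic step. By contrast, (i) is a clean residue–versus–Bethe correspondence, (iii) is a standard ``polynomial in $c$ is nonzero'' argument, and the limit statements in (iv) are conceptually routine once the degrees of the witnesses are shown to be uniformly controlled—this uniform degree control being the one subtlety to watch in (iv)(a).
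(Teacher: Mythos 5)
The paper does not prove this theorem at all: it is imported verbatim from [MV1] (Mukhin--Varchenko, \emph{Critical points of master functions and flag varieties}) and stamped with a \emph{qed}, so there is no internal proof to compare against. Judged on its own terms, your sketch of part (i) is correct and is the standard argument: the residue of $\prod_{i\ne j}y_i^{-a_{i,j}}/y_j^2$ at a simple root of $y_j$ is, up to a nonzero factor, exactly the corresponding expression in \Ref{bethe eqn}, so polynomiality of all $\tilde y_j$ is equivalent to the critical point equations. Part (iii) is also essentially right once you add the observation that a multiple root of $\tilde y_j(\cdot,c)$ forces $\Wr(\tilde y_j,y_j)$ to vanish there, confining potential bad roots to the fixed finite set of roots of $y_{j\pm1}$, whence only finitely many bad $c$.

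The genuine gap is in part (ii), which you correctly identify as the crux but do not actually close. Your plan is to produce the new witness for a direction $i$ adjacent to $j$ by a three-term Pl\"ucker identity $\Wr(\Wr(f,g),\Wr(f,h))=f\,\Wr(f,g,h)$ applied to the old witnesses. For this Cartan matrix that cannot work as stated on the doubled edges: e.g.\ after regenerating $y_1\mapsto\tilde y_1$ the direction-$0$ equation becomes $\Wr(\hat y_0,y_0)=\tilde y_1^{\,2}$, and no bilinear Wronskian identity produces a \emph{square} of the new entry on the right-hand side. (This is already the issue in the rank-one Adler--Moser situation, where the requisite identities are genuinely quadratic and nontrivial.) The route actually taken in [MV1] for general Kac--Moody data is different: one first shows that for \emph{generic} $c$ the immediate descendant $y^{(j)}(x,c)$ is a generic tuple that again satisfies the Bethe equations \Ref{bethe eqn} --- a fresh residue computation at the roots of $\tilde y_j$ and of its neighbors, not a formal consequence of the old witnesses --- hence is fertile by (i); fertility for the remaining finitely many $c$ then follows from the closedness statement (iv)(a). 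Note this makes (iv)(a) logically prior to (ii), so your (iv)(a) must be made airtight: the witnesses need not be bounded along the sequence, and the correct fix is to kill the $x^{k_j}$-component (the kernel direction $\C y_j$ of $\tilde y\mapsto\Wr(\tilde y,y_j)$) and use that this linear map has constant rank as $y_j$ varies with fixed degree, which gives the uniform bound you flag as ``the one subtlety to watch.''
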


	\subsection{Degree increasing generation}
	\label{Degree increasing generation}
	
	Let $y=(y_0,\dots,y_n)$ be a generic fertile $(n+1)$-tuple of polynomials.
Define $k_j=\deg y_j$ for 	 $j=0,\dots, n$.
	
	The polynomial $\tilde y_0$ in \Ref{deG 0}
	is of degree $k_0$ or
	$\tilde k_0=2k_1+ 1 - k_0$. We say that the  generation $(y_0,\dots,y_n) \to (\tilde y_0,\dots,y_n)$ is
	{\it  degree increasing } in the $0$-th direction  if $\tilde k_0 > k_0$. In that
	case $\deg \tilde y_0=\tilde k_0$ for all $c$.	
	
	For $i=1,\dots, n-1$, the polynomial $\tilde y_i$ in \Ref{deG i}
	is of degree $k_i$ or
	$\tilde k_i= k_{i-1}+k_{i+1} + 1 - k_i$. We say that the  generation $(y_0,\dots, y_{i}, \dots, y_n) \to (y_0,\dots, \tilde y_i, \dots, y_{n})$ is
	{\it  degree increasing } in the $i$-th direction  if $\tilde k_i > k_i$. In that
	case $\deg \tilde y_i=\tilde k_i$ for all $c$.
	
	The polynomial $\tilde y_n$ in \Ref{deG n}
	is of degree $k_n$ or
	$\tilde k_n=2k_{n-1}+ 1 - k_n$. We say that the  generation $(y_0,\dots,y_{n-1},y_n) \to ( y_0,\dots,y_{n-1},\tilde y_n)$ is
	{\it  degree increasing } in the $n$-th direction if $\tilde k_n > k_n$. In that
	case $\deg \tilde y_n=\tilde k_n$ for all $c$.

	For $i= 0,\dots,n$, if the generation is degree increasing in the $i$-th direction we normalize family
	\Ref{simple i} and construct a map
	$ Y_{y,i} : \C \to (\C[x])^{n+1}$ as follows. First we multiply the polynomials $y_0,\dots, y_n$ by numbers to make them monic.
	Then we choose a monic polynomial $ y_{i,0}$ satisfying the equation $\Wr({y_{i,0}},y_i) = \ep\, \prod_{j\ne i}y_j^{-a_{j,i}},$ for some
nonzero integer $\ep$,
	and such that the coefficient of $x^{k_i}$ in $y_{i,0}$ equals zero. Set
	\bean
	\label{tilde yi}
	\tilde y_i(x,c)=y_{i,0}(x) + cy_i(x),
	\eean
	and define
	\bean
	\label{normalized i}
\phantom{aaaa}
	Y_{y,i} \ :\ \C\ \to\ (\C[x])^{n+1}, \qquad c \mapsto\ y^{(i)}(x,c) = (y_0(x),\dots,\tilde y_i(x,c),\dots,y_{n}(x)).
	\eean
	The polynomials of this $(n+1)$-tuple are monic.
	
	\subsection{Degree-transformations and generation of vectors of integers}
	\label{sec degree transf}

	The degree-transformations
	\bean
	\label{l-transformation}
	\phantom{aaa}
&&
	 k:=(k_0,\dots, k_n)\ \mapsto \
	 k^{(0)}=(2k_1+1-k_0,\dots, k_n),
	\\
	\notag
	&&
	 k:=(k_0,\dots,k_{n})\ \mapsto \
	 k^{(i)}=(k_0,\dots, k_{i-1}+k_{i+1}+1-k_i,\dots, k_{n}), 
\\
\notag
&&
\phantom{aaaaaaaaaaaaaaaaaaaaaaaaaaaaaaaaaaaaaaaaaaaaaaaaaaa}
\qquad i=1,\dots,n-1,
	\\
	\notag
	&&
	 k:=(k_0,\dots,k_{n})\ \mapsto \
	 k^{(n)}=(k_0,\dots, 2k_{n-1}+1-k_n),
	\eean
	correspond to the shifted action of
  reflections $ w_0,\dots, w_n\in W$,
	where $W$ is the Weyl group associated with the Cartan matrix $A$ in \Ref{A}
 and $w_0,\dots, w_n$
	are the standard generators, see \cite[Lemma 3.11]{MV1} for more detail.

	We take formula \Ref{l-transformation} as the definition of {\it degree-transformations}:
		\bean
		\label{s i l-transf}
		&&
		w_0\ :  k\ \mapsto \
		 k^{(0)}=(2k_1+1-k_0,\dots, k_n),
		\\
		\notag
		&&
		w_i\ : k\ \mapsto \
		 k^{(i)}=(k_0,\dots, k_{i-1}+k_{i+1}+1-k_i,\dots, k_{n}),
 \qquad i=1,\dots,n-1,
		\\
		\notag
		&&
		w_n\ : k\ \mapsto \
		 k^{(n)}=(k_0,\dots, 2k_{n-1}+1-k_n),
		\eean
	acting on arbitrary vectors $ k=(k_0,\dots,k_n)$.

	We start with the vector $ k^\emptyset=(0,\dots,0)$ and a sequence $J=(j_1,j_2,\dots, j_m)$ of
	integers such that $j_i\in \{0,\dots,n\}$ for all $i$.
	We apply the corresponding degree transformations to  $ k^\emptyset$ and obtain
	a sequence of vectors $ k^\emptyset,$\ $   k^{(j_1)} =w_{j_1} k^\emptyset, $\ $
	k^{(j_1,j_2)} = w_{j_2} w_{j_1} k^\emptyset$,\dots,
	\bean
	\label{gen vector}
	 k^J  = w_{j_m}\dots w_{j_2} w_{j_1} k^\emptyset .
	\eean
	We say that the {\it vector $ k^J $ is generated from $(0,\dots,0)$ in the direction of $J$}.
	
	We call a sequence $J$ {\it degree increasing} if for every $i$ the transformation
	$w_{j_i}$ applied to  $  w_{j_{i-1}}\dots w_{j_1} k^\emptyset$
	increases the $j_i$-th coordinate.

	\subsection{Multistep generation}
	\label{sec generation procedure}
	
	Let $J = (j_1,\dots,j_m)$ be a degree increasing sequence.
	Starting from $y^\emptyset=(1,\dots,1)$ and $J$, we construct
	a map
	\bea
	Y^J : \C^m \to (\C[x])^{n+1}
	\eea
 by induction on $m$.
	If $J=\emptyset$, the map $Y^\emptyset$ is the map $\C^0=(pt)\ \mapsto y^\emptyset$.
	If $m=1$ and $J=(j_1)$,  the map
	$Y^{(j_1)} :  \C \to (\C[x])^{n+1}$ is given by formula \Ref{normalized i}
	for $y=y^\emptyset$ and $j=j_1$. More precisely, equation \Ref{Wr-Cr} 
	takes the form $\Wr(\tilde y_{j_{1}},1) =1$. Then $\tilde y_{j_{1},0}= x$ and
	\bea
	Y^{(j_{1})}\ :\ \C \mapsto (\C[x])^{n+1}, \qquad
	c \mapsto (1,\dots, x+c,\dots, 1).
	\eea
	By Theorem \ref{fertile cor}
	all tuples in the image are fertile and almost all tuples are generic
	(in this example all tuples are generic). 
	Assume that for ${\tilde J} = (j_1,\dots,j_{m-1})$,  the map
	$Y^{{\tilde J}}$ is constructed. To obtain  $Y^J$ we apply the
	generation procedure in the $j_m$-th
	direction to every tuple of the image of $Y^{{\tilde J}}$. More precisely, if
	\bean
	\label{J'}
	Y^{{\tilde J}}\ : \
	{\tilde c}=(c_1,\dots,c_{m-1}) \ \mapsto \ (y_0(x,{\tilde c}),\dots, y_n(x,{\tilde c})),
	\eean
	then 
	\bean
	\label{Ja}
	&&
	Y^{J} :	({\tilde c},c_m) \mapsto
	(y_0(x,{\tilde c}), \dots, y_{j_{m},0}(x,{\tilde c}) + c_m y_{j_{m}}(x,{\tilde c}),\dots, y_n(x,{\tilde c})).
	\eean
	The map  $Y^J$ is called  the {\it generation  of tuples   from $ y^\emptyset$ in the $J$-th direction}.

	\begin{lem}
		\label{lem gen procedure}
		All tuples in the image of $Y^J$ are fertile and almost all tuples are generic. For any $c\in\C^m$
		the $(n+1)$-tuple $Y^J(c)$ consists of monic polynomials. The degree vector of this tuple
		equals $ k^J$.
		\qed
	\end{lem}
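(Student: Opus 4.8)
The plan is to argue by induction on the length $m$ of the degree increasing sequence $J=(j_1,\dots,j_m)$, using Theorem \ref{fertile cor} at every step. For the base case $m=0$ the single tuple $y^\emptyset=(1,\dots,1)$ is monic, generic and fertile and has degree vector $k^\emptyset=(0,\dots,0)$, so all four assertions hold. For the inductive step I would assume the statement for $\tilde J=(j_1,\dots,j_{m-1})$ and analyze the map $Y^J$ of \Ref{Ja}, which replaces the $j_m$-th polynomial of $Y^{\tilde J}(\tilde c)$ by the normalized descendant $y_{j_m,0}(x,\tilde c)+c_m\,y_{j_m}(x,\tilde c)$ of \Ref{tilde yi}.

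The monicity and the degree vector are the routine part. Every entry of $Y^J(c)$ is monic by construction. By the induction hypothesis the degree vector of $Y^{\tilde J}(\tilde c)$ equals $k^{\tilde J}$; since $J$ is degree increasing, the transformation $w_{j_m}$ raises the $j_m$-th coordinate, and as recorded in Section \ref{Degree increasing generation} this forces $\deg\big(y_{j_m,0}+c_m y_{j_m}\big)=\tilde k_{j_m}$ for \emph{all} $c_m$. Hence the degree vector of $Y^J(c)$ is $w_{j_m}k^{\tilde J}=k^J$, and in particular the degrees are constant over the whole cell $\C^m$. Because the $y_{j_m,0}$ are cut out by Wronskian equations that form a linear system of constant size, this constancy makes the coefficients of all entries of $Y^J(c)$ depend polynomially on $c$; I will use this below.

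For fertility I would split the base according to genericity. By the induction hypothesis there is a proper closed (measure-zero) subset $S\subset\C^{m-1}$ off which $Y^{\tilde J}(\tilde c)$ is generic and fertile, hence represents a critical point by Theorem \ref{fertile cor}(i). For such $\tilde c$, Theorem \ref{fertile cor}(ii) shows that \emph{every} immediate descendant $Y^J(\tilde c,c_m)$ is fertile. For $\tilde c\in S$ the tuple $Y^{\tilde J}(\tilde c)$ is still fertile but need not represent a critical point, so (ii) does not apply directly; here I would choose generic $\tilde c_\ell\to\tilde c$ (possible as $S$ is proper), observe that the constancy of degrees gives $Y^J(\tilde c_\ell,c_m)\to Y^J(\tilde c,c_m)$ in $(\C[x])^{n+1}$, and conclude by Theorem \ref{fertile cor}(iv)(a) that the limit $Y^J(\tilde c,c_m)$ is fertile. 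Thus all tuples in the image of $Y^J$ are fertile.

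Finally, for genericity of almost all tuples I note that for $\tilde c\notin S$ Theorem \ref{fertile cor}(iii) leaves only finitely many exceptional $c_m$ with $Y^J(\tilde c,c_m)$ non-generic. Genericity is the non-vanishing of the discriminants of the $y_i$ and of the resultants of consecutive pairs $y_i,y_{i+1}$, which by the previous paragraph are polynomials in $c\in\C^m$. The preceding sentence exhibits points where all of them are nonzero, so none vanishes identically, whence the non-generic locus is a proper closed subset of $\C^m$ and almost all $Y^J(c)$ are generic. I expect the one genuinely delicate point to be the fertility of descendants over $S$: Theorem \ref{fertile cor}(ii) is available only for tuples representing critical points, so the limiting argument of Theorem \ref{fertile cor}(iv) is essential precisely there, and the whole step relies on the degree increasing hypothesis to keep the degrees, and hence the limits, under control.
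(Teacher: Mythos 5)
Your argument is correct and matches the route the paper intends: the lemma is stated without proof precisely because it follows by induction on $m$ from the construction in Section \ref{sec generation procedure} together with Theorem \ref{fertile cor}, which is exactly what you carry out (including the right use of part (iv) to get fertility of descendants over the non-generic locus, where part (ii) alone does not apply). No gaps; your version is simply a more detailed write-up of the omitted proof.
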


	\begin{lem}
		\label{lem uniqeness}
		The map  $Y^J$ sends distinct points of $\C^m$ to distinct points of  $(\C[x])^{n+1}$.
	\end{lem}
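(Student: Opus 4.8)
The plan is to argue by induction on the length $m$ of the sequence $J$. The base case $m=1$ is immediate: by construction $Y^{(j_1)}$ sends $c$ to the tuple $(1,\dots,x+c,\dots,1)$, and distinct values of $c$ give distinct polynomials $x+c$ in the $j_1$-th slot. For the inductive step, write $\tilde J=(j_1,\dots,j_{m-1})$ and assume $Y^{\tilde J}$ is injective. Given a point $(\tilde c,c_m)\in\C^{m-1}\times\C$, recall from \Ref{Ja} that $Y^J(\tilde c,c_m)$ differs from $Y^{\tilde J}(\tilde c)$ only in the $j_m$-th coordinate, whose entry is $\tilde y_{j_m}(x,\tilde c,c_m)=y_{j_m,0}(x,\tilde c)+c_m\,y_{j_m}(x,\tilde c)$. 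Here $y_{j_m}(x,\tilde c)$ is the monic, degree $k_{j_m}$, $j_m$-th coordinate of $Y^{\tilde J}(\tilde c)$, while $y_{j_m,0}$ is monic of degree $\tilde k_{j_m}$ with vanishing coefficient of $x^{k_{j_m}}$; since $J$ is degree increasing, $\tilde k_{j_m}>k_{j_m}$.

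The heart of the argument is to reconstruct, canonically from the single tuple $Y^J(\tilde c,c_m)$, both the scalar $c_m$ and the full tuple $Y^{\tilde J}(\tilde c)$. All coordinates of index $\ne j_m$ are visible directly, being unchanged by \Ref{Ja}; the only datum lost is the old polynomial $y_{j_m}(x,\tilde c)$. To recover it I would use fertility: by \Ref{Wr-Cr} the parent and descendant in direction $j_m$ satisfy $\Wr(\tilde y_{j_m},y_{j_m})=\ep\prod_{l\ne j_m}y_l^{-a_{l,j_m}}$, whose right-hand side involves only the coordinates $l\ne j_m$ and is therefore already determined by $Y^J(\tilde c,c_m)$ (this holds in each of the three cases $j_m=0$, $1\le j_m\le n-1$, $j_m=n$ of the $\AT$ Cartan matrix). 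Viewing this as a first-order linear ODE in the unknown $y_{j_m}$ with the known function $\tilde y_{j_m}$, any two polynomial solutions differ by a constant multiple of $\tilde y_{j_m}$, since the homogeneous equation $\Wr(\tilde y_{j_m},\,\cdot\,)=0$ has polynomial solution space $\C\,\tilde y_{j_m}$. Because $\deg\tilde y_{j_m}=\tilde k_{j_m}>k_{j_m}$, there is at most one polynomial solution of degree $\le k_{j_m}$; as $y_{j_m}(x,\tilde c)$ is such a solution, it is uniquely pinned down, and with it all of $Y^{\tilde J}(\tilde c)$ is reconstructed.

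Moreover the scalar $c_m$ is read off directly as the coefficient of $x^{k_{j_m}}$ in $\tilde y_{j_m}(x,\tilde c,c_m)$: since $y_{j_m,0}$ has vanishing coefficient of $x^{k_{j_m}}$ by construction while $y_{j_m}$ is monic of degree $k_{j_m}$, that coefficient equals $c_m$. Now suppose $Y^J(\tilde c,c_m)=Y^J(\tilde c',c_m')$. Comparing the coefficient of $x^{k_{j_m}}$ gives $c_m=c_m'$, and the reconstruction above, being determined by the common image, yields $Y^{\tilde J}(\tilde c)=Y^{\tilde J}(\tilde c')$; the inductive hypothesis then forces $\tilde c=\tilde c'$, completing the step.

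The step I expect to be most delicate is the uniqueness of the low-degree polynomial solution of the Wronskian ODE, which is precisely where the degree-increasing hypothesis $\tilde k_{j_m}>k_{j_m}$ enters; without it the reconstruction of $y_{j_m}(x,\tilde c)$ need not be unique. The remaining points are bookkeeping: confirming that the product $\prod_{l\ne j_m}y_l^{-a_{l,j_m}}$ depends only on the visible coordinates, and that the degrees behave as recorded in Lemma \ref{lem gen procedure}.
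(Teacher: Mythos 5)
Your proof is correct and follows the same route the paper indicates: the paper's proof is only the one-line remark that the lemma ``is easily proved by induction on $m$,'' and your argument is exactly that induction, with the details supplied (recovering $c_m$ from the coefficient of $x^{k_{j_m}}$ and recovering $y_{j_m}(x,\tilde c)$ as the unique polynomial solution of the Wronskian equation of degree below $\tilde k_{j_m}$, which is where the degree-increasing hypothesis is used).
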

	
	\begin{proof}
		The lemma is easily proved by induction on $m$.
	\end{proof}

\subsection{Critical points and the population generated from $y^\emptyset$}
	
	The set of all tuples $(y_0,\dots,y_n)\in (\C[x])^{n+1}$ obtained from $y^\emptyset=(1,\dots,1)$
	by generations in all  directions $J=(j_1,\dots,j_m)$, $m\ge 0$, (not necessarily degree increasing)
 is called the {\it population of tuples}
	generated from $y^\emptyset$, see \cite{MV1, MV2}.

\begin{thm} [{\cite{MV3}}]
\label{one gen}

If a tuple of polynomials $(y_0,\dots,y_n)$ represents a critical point of the master function
$\Phi(u, k)$ defined in \Ref{Master} for some parameters $ k=(k_0,\dots,k_n)$, then 
$(y_0,\dots,y_n)$  is a point of the population generated from $y^\emptyset$ by a degree increasing generation, that is,
 there exist a degree increasing sequence $J=(j_1,\dots,j_m)$  and a point $c\in\C^m$
such that  $(y_0(x),\dots,y_n(x)) = Y^J(x,c)$.
Moreover,  for any other critical point of that function $\Phi(u, k)$  there is a
point $c\rq{}\in\C^m$ such that
the tuple $ Y^J(x,c\rq{})$ represents that other critical point.

\end{thm}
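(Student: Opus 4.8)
The plan is to deduce the statement from the population machinery of \cite{MV1,MV3}, translating everything through the dictionary of Section \ref{sec degree transf} between a generation in the $i$-th direction and the shifted action of the Weyl group $W$ of the Cartan matrix $A$ of \Ref{A}. A generic fertile tuple is recorded by its degree vector $k=(k_0,\dots,k_n)$, and $k^J=w_{j_m}\cdots w_{j_1}\cdot k^\emptyset$ by \Ref{gen vector}, with $k^\emptyset=(0,\dots,0)$ the shifted-dominant vector. The goal is to realize the given tuple as $Y^J(c)$ for a degree increasing $J$, and to see that the cell $Y^J(\C^m)$ depends only on $k$.

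First, two reductions. By Theorem \ref{fertile cor}(i) the tuple $(y_0,\dots,y_n)$ of the hypothesis, being generic and representing a critical point, is fertile of degree $k$. Second, generation in a fixed direction $i$ is involutive up to rescaling: since the right-hand side of \Ref{Wr-Cr} involves only the polynomials $y_l$, $l\neq i$, which are untouched, the relation $\Wr(\tilde y_i,y_i)=\prod_{l\neq i}y_l^{-a_{l,i}}$ forces $\Wr(y_i,\tilde y_i)=-\prod_{l\neq i}y_l^{-a_{l,i}}$, so that $y_i$ is in turn an immediate descendant of $\tilde y_i$. The degree count of Section \ref{Degree increasing generation} shows that generation in direction $i$ realizes only the two degrees $k_i$ and $\tilde k_i$; when $\tilde k_i<k_i$ a distinguished member of the family has degree $\tilde k_i$, and generating back from it in the $i$-th direction is degree increasing and recovers $y$.

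The core of the argument is \emph{reverse generation} down to $y^\emptyset$. Writing $k=w\cdot k^\emptyset$ with $w\neq e$, choose a left descent $w_i$ of $w$, i.e. $\ell(w_iw)<\ell(w)$; such $i$ always exists, and under the dictionary of Section \ref{sec degree transf} it is exactly a direction in which generation strictly lowers the degree. Replacing $y_i$ by the distinguished lower-degree descendant yields a fertile tuple $y'$ of strictly smaller total degree that, by involutivity, has $y$ among its $i$-th descendants. Since the number of reductions equals the number of letters removed, after $m=\ell(w)$ steps the process terminates at the unique tuple admitting no degree-decreasing direction, namely $y^\emptyset$. Reading the chosen directions in reverse order gives $J=(j_1,\dots,j_m)$, which is degree increasing with $k^J=k$ by construction, and retracing the generations yields $y=Y^J(c)$ for a unique $c\in\C^m$, the uniqueness being Lemma \ref{lem uniqeness}.

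For the final assertion I would show the cell is determined by $k$ alone. Another critical point $y'$ of the same $\Phi(u,k)$ is fertile of the same degree $k$, hence lies in $Y^{J'}(\C^m)$ for some degree increasing $J'$ with $k^{J'}=k=w\cdot k^\emptyset$. Since the shifted-dominant vector is regular, $w\mapsto w\cdot k^\emptyset$ is injective, so $w^{J'}=w=w^J$; and because the population cell depends only on the Weyl group element and not on the chosen reduced word, $Y^{J'}(\C^m)=Y^J(\C^m)$, whence $y'=Y^J(c')$ for some $c'\in\C^m$. The step I expect to be the main obstacle is the stability of this construction along the reverse path: guaranteeing that each intermediate $y',y'',\dots$ is again fertile and still admits a degree-decreasing direction, even when it fails to be generic, and likewise the well-definedness of the cell at such tuples. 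This is exactly what is controlled by the limit statement Theorem \ref{fertile cor}(iv) and the flag-variety description of \cite{MV3}; once these are invoked the remaining bookkeeping is handled by Lemmas \ref{lem gen procedure} and \ref{lem uniqeness}.
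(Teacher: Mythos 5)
The paper itself does not prove this theorem from scratch: its ``proof'' is a two-line citation of Theorem~3.8 and Lemmas~3.5 and~3.7 of \cite{MV2} (the statement header also points to \cite{MV3}), which contain exactly the descent machinery you are trying to reconstruct. Your proposal is therefore attempting strictly more than the paper does, and the reverse-generation skeleton (involutivity of generation in a fixed direction, descent on the length of a Weyl group element, injectivity of $w\mapsto w\cdot k^\emptyset$ from regularity of $\rho$) is indeed the right picture of what those cited lemmas establish. But as a standalone argument it has a genuine gap at the decisive step.

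The gap is the sentence ``Writing $k=w\cdot k^\emptyset$ with $w\neq e$.'' That the degree vector of a critical point lies in the shifted Weyl orbit of $k^\emptyset=(0,\dots,0)$ is not a bookkeeping fact; it is essentially the theorem (and the main content of \cite{MV3}). Without it, your descent need not terminate at $y^\emptyset$: a degree-decreasing direction $j$ exists iff $\sum_i a_{i,j}k_i\geq 2$, so any nonzero $k$ with $A^{T}k\leq(1,\dots,1)$ componentwise admits no such direction, and since $A$ in \Ref{A} is an affine Cartan matrix there are nonzero positive integer vectors with $A^{T}k=0$ (multiples of the imaginary root). Hence $y^\emptyset$ is \emph{not} ``the unique tuple admitting no degree-decreasing direction,'' and the induction on total degree can stall at a nonzero fertile tuple unless one first rules out such degree vectors for critical points --- which is precisely what you would need \cite{MV3} for, making the descent argument circular as written. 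Two further points are asserted without justification: that fertility and genericity survive each step of the reverse path (you flag this and defer to Theorem~\ref{fertile cor}(iv), which is fair but is exactly what \cite[Lemmas~3.5, 3.7]{MV2} control), and, for the ``moreover'' clause, that the cell $Y^{J}(\C^m)$ depends only on $k^{J}$ and not on the chosen degree increasing sequence $J$; this reduced-word independence is plausible from the flag-variety picture but is a nontrivial claim that your proof simply posits.
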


By Theorem \ref{one gen} a function $\Phi(u, k)$ either does not have critical points at
 all or all of its critical points form one cell $\C^m$.
	
\begin{proof}
Theorem 3.8 in \cite{MV2} says  that  
$(y_0,\dots,y_n)$  is a point of the population generated from $y^\emptyset$.  The fact that $(y_0,\dots,y_n)$  can be generated from 
$y^\emptyset$ by a degree increasing generation is a corollary of Lemmas 3.5 and 3.7 in \cite{MV2}.
The same lemmas  show that any other critical point of the master function $\Phi(u, k)$ is represented by the tuple 
 $ Y^J(x,c\rq{})$  for a suitable $c\rq{}\in\C^m$.
\end{proof}

	\section{Critical points of master functions and Miura opers}
	\label{sec cr and Miu}

	\subsection{Miura oper associated with a tuple of polynomials, \cite{MV2}}
	We say that a Miura oper of type $\AT$, $\L=\der + \La^{(2)} + V$, is {\it associated to an $(n+1)$-tuple of polynomials} $y$ if
$	V=-\sum_{i=0}^{n}\ln'(y_i)\,h_i$, 
	where $\ln'(f(x))=\frac{f'(x)}{f(x)}$. If $\L$ is associated to $ y$ and
$V=\sum_{i=1}^{2n}v_i e_{i,i}$,  then for $i=1,\dots,n,$
	\bean
	\label{v form}
\phantom{aaaaa}
	v_{i}=-v_{2n+1-i}=\ln'\Big(\frac{y_i}{y_{i-1}}\Big), \qquad i=1,\dots,n.
\eean
We also have
	\bean
	\label{Def eq}
	\langle \alpha_j, V\rangle = \ln' \Big( \prod_{i=0}^n y_i^{-a_{i,j}} \Big) ,
	\eean
where  $a_{i,j}$ are entries of the Cartan matrix of type $C^{(1)}_{n}$.
	More precisely,
	\bean
	\label{def eq}
&&
	\langle \alpha_0, V\rangle = \ln' \Big( \frac{y_1^{2}}{y_0^2}\Big) ,
	\qquad
	\langle \alpha_i, V\rangle = \ln' \Big( \frac{y_{i-1}y_{i+1}}{y_i^2} \Big),\quad i=1,\dots,n-2,
	\\
	\notag
&&
\phantom{aaaaa}
	\langle \alpha_n, V\rangle = \ln' \Big( \frac{y_{n-1}^2}{y_n^2} \Big).
	\eean
	For example,
	\bean
\label{Lem}
	\L^\emptyset: = \der +\La^{(2)}
	\eean
		is associated to the tuple $y^\emptyset=(1,\dots, 1)$.
	
	Define the map
	\bea
	\mu : (\C[x]\minus \{0\})^{n+1} \to \mc M(C^{(1)}_{n}),
	\eea
	which sends a tuple $y=(y_0,\dots,y_n)$ to the Miura oper $\L=\der + \La^{(2)} + V$ associated to $y$.

	\subsection{Deformations of Miura opers of type $\AT$, \cite{MV2}}

	\begin{lem}[\cite{MV2}]
		Let $\L$ $= \der + \La^{(2)} + V$ be a Miura oper of type $\AT$.  Let $\alpha_j$ be the elements of the dual space defined in Section \ref{C1DefSec}. Let $g \in \Bb$ and $j \in \{0,\dots, n\}$.  Then
		\bean \label{adeq}
		e^{\on{ad} \,g f_j} \L = \der + \La^{(2)} + V - g h_j - (g^\prime - \langle \alpha_j , V \rangle g + g^2)f_j.
		\eean
\qed
	\end{lem}

	\begin{cor}[\cite{MV2}]
		Let $\L = \der + \La^{(2)} + V$ be a Miura oper  of type $\AT$. Then $e^{\on{ad}\, g f_j } \L$
		is a Miura oper if and only if the scalar function
		$g$ satisfies the Riccati equation
		\bean
		\label{Ric}
		g' - \langle \alpha_j , V \rangle g +  g^2 = 0 \ .
		\eean
\qed
	\end{cor}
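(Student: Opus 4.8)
The plan is to read the statement off directly from the preceding Lemma, whose formula \Ref{adeq} already contains all of the computational content. The one structural fact I would emphasize at the outset is the definition of a Miura oper of type $\AT$: it is an operator of the form $\der + \La^{(2)} + \tilde V$ with $\tilde V \in \B(\gAA^0)$. In other words, once the fixed part $\der + \La^{(2)}$ is stripped away, everything that remains must lie in the degree-zero (Cartan) component of the graded Lie algebra $\gAA = \oplus_j \gAA^j$.

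First I would apply the Lemma to the given Miura oper $\L = \der + \La^{(2)} + V$, which yields
\be
e^{\on{ad}\, g f_j}\L = \der + \La^{(2)} + V - g h_j - \big(g' - \langle \alpha_j , V \rangle g + g^2\big) f_j .
\ee
Next I would decompose the terms appearing after $\der + \La^{(2)}$ according to the standard grading. The summands $V$ and $-g h_j$ both lie in $\B(\gAA^0)$: indeed $V \in \B(\gAA^0)$ by hypothesis, and $h_j \in \h = \gAA^0$, so $g h_j \in \B(\gAA^0)$ since $g$ is a scalar function. The remaining summand $-\big(g' - \langle \alpha_j , V \rangle g + g^2\big) f_j$ lies in $\B(\gAA^{-1})$, because $f_j$ has degree $-1$ and its coefficient is again a scalar function of $x$.

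The final step is to compare with the defining form. The operator $e^{\on{ad}\, g f_j}\L$ is a Miura oper of type $\AT$ precisely when the entire part following $\der + \La^{(2)}$ lies in $\B(\gAA^0)$, i.e.\ when its $\B(\gAA^{-1})$ component vanishes. Since $f_j \neq 0$, this happens if and only if the scalar coefficient $g' - \langle \alpha_j , V \rangle g + g^2$ is identically zero, which is exactly the Riccati equation \Ref{Ric}. I do not expect any genuine obstacle here: all of the work is packaged into the Lemma, and the only thing to check is the elementary grading bookkeeping that isolates the $h_j$-term in degree $0$ from the $f_j$-term in degree $-1$.
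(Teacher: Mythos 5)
Your proposal is correct and is exactly the intended derivation: the paper states this corollary without proof (citing \cite{MV2}), but it is meant to follow immediately from formula \Ref{adeq} of the preceding Lemma by observing that the result is a Miura oper precisely when the degree $-1$ component, namely the coefficient of $f_j$, vanishes. Your grading bookkeeping is the whole argument, and there is nothing to add.
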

	
	Let $\L = \der + \La^{(2)} + V$ be a Miura oper.
	For $j\in\{0,\dots,n\}$, we say that $\L$ is  {\it deformable in the $j$-th direction}
	if equation \Ref{Ric} has a nonzero solution $g$, which is a rational function.

	\begin{thm} [\cite{MV2}]
		\label{ricc thm}
		Let
		$\L = \partial  +  \La^{(2)}  +  V$ be  the  Miura oper
associated to the tuple of polynomials $y=(y_0, \dots, y_n)$.
		Let $j\in \{0,\dots,n\}$. Then $\L$ is deformable in the $j$-th direction
		if and only if there exists a polynomial $\tilde y_j$ satisfying equation
		\Ref{Wr-Cr}. Moreover, in that case any nonzero rational
		solution $g$ of the Riccati equation \Ref{Ric} has the form
		$g = \mathrm{ln}' (\tilde y_j/ y_j)$ where $\tilde y_j$ is a solution of equation 
		\Ref{Wr-Cr}. If $g = \mathrm{ln}' (\tilde y_j/ y_j)$, then
		the Miura oper
		\bean
		\label{transformation}
		e^{\on{ad}\, g f_j} \L = \partial  + \La^{(2)} + V - g  h_j
		\eean
		is associated to the tuple $y^{(j)}$, which is obtained from the tuple $y$ by replacing $y_j$ with $\tilde y_j$.

	\end{thm}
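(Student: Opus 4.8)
The plan is to linearize the Riccati equation \Ref{Ric} and match the resulting first-order condition with the Wronskian equation \Ref{Wr-Cr}. For a nonzero scalar function $g$ I would write $g=\ln' w$ with $w=\tilde y_j/y_j$, so that $g'+g^2=w''/w$. Using the identity $\langle\alpha_j,V\rangle=\ln'\big(\prod_{i=0}^n y_i^{-a_{i,j}}\big)$ from \Ref{Def eq} and abbreviating $p=\langle\alpha_j,V\rangle$, the Riccati equation \Ref{Ric} becomes $w''=p\,w'$, i.e. $\ln'(w')=p$. Since $a_{j,j}=2$, integrating gives $w'=C\,y_j^{-2}\prod_{i\ne j}y_i^{-a_{i,j}}$ for a nonzero constant $C$, and a short computation yields $\Wr(\tilde y_j,y_j)=-y_j^2\,w'=-C\prod_{i\ne j}y_i^{-a_{i,j}}$. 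Thus, up to the scalar $C$ (absorbed by rescaling $\tilde y_j$), the statement that $g=\ln'(\tilde y_j/y_j)$ solves \Ref{Ric} is equivalent to the statement that $\tilde y_j$ solves \Ref{Wr-Cr}. This correspondence is the engine of the whole argument.

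For the implication that a polynomial solution $\tilde y_j$ of \Ref{Wr-Cr} yields deformability, I would simply set $g=\ln'(\tilde y_j/y_j)$ and run the computation above backwards to see that $g$ solves \Ref{Ric}. Since the right-hand side of \Ref{Wr-Cr} is a nonzero polynomial, $\Wr(\tilde y_j,y_j)\neq0$, so $\tilde y_j/y_j$ is nonconstant and $g$ is a nonzero rational solution; hence $\L$ is deformable in the $j$-th direction.

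The converse is where the real work lies. Given a nonzero rational solution $g$ of \Ref{Ric}, the computation above shows that $w'=C\prod_{i=0}^n y_i^{-a_{i,j}}$ is an explicit rational function, whence $w=w'/g$ is rational (here $g\neq0$ is used) and so is $\tilde y_j:=y_j w$; this already produces a rational solution of \Ref{Wr-Cr}. The crux is to promote rationality to polynomiality. For this I would argue with poles: since $\Wr(\tilde y_j,y_j)=-C\prod_{i\ne j}y_i^{-a_{i,j}}$ is a polynomial, $\tilde y_j$ can have poles only at roots of $y_j$; and since $w'$ is the derivative of the rational function $w$ it has vanishing residues, so at each simple root of $y_j$ the function $w$ has at worst a simple pole, which is cancelled by the simple zero of $y_j$. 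Consequently $\tilde y_j=y_j w$ is regular on all of $\C$, hence a polynomial. This pole analysis, which relies on the roots of $y_j$ being simple and disjoint from those of the neighbouring $y_i$ (genericity of the tuples in the population), is the step I expect to be the main obstacle. Finally, the identity $g=\ln'(\tilde y_j/y_j)$ coming from the matching shows that every nonzero rational solution arises in this way.

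It remains to identify the deformed oper. I would invoke formula \Ref{adeq}: because $g$ satisfies \Ref{Ric}, the coefficient of $f_j$ in $e^{\ad\,g f_j}\L$ vanishes, giving $e^{\ad\,g f_j}\L=\der+\La^{(2)}+V-g\,h_j$. Writing $g=\ln'\tilde y_j-\ln' y_j$ and using $V=-\sum_{i}\ln'(y_i)h_i$, one checks directly that $V-g\,h_j=-\sum_{i\ne j}\ln'(y_i)h_i-\ln'(\tilde y_j)h_j$, which is exactly the potential of the Miura oper associated with the tuple $y^{(j)}$ obtained from $y$ by replacing $y_j$ with $\tilde y_j$. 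This completes the plan.
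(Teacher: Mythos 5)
The paper offers no proof of this theorem: it is imported verbatim from \cite{MV2}, and the argument there is precisely the Riccati linearization you describe, so your route agrees with the source. The engine of your proof is correct: with $g=\ln'w$ one has $g'+g^2=w''/w$, the Riccati equation \Ref{Ric} becomes $\ln'(w')=\langle\alpha_j,V\rangle$, hence $w'=C\prod_{i}y_i^{-a_{i,j}}$, and $\Wr(\tilde y_j,y_j)=-y_j^2w'$ matches \Ref{Wr-Cr} up to the constant $-C$, which is absorbed by rescaling $\tilde y_j$ (this leaves $g=\ln'(\tilde y_j/y_j)$ unchanged). The deduction that $w=w'/g$ is rational for any nonzero rational solution $g$ is the right way to get the converse started, and the identification of $e^{\ad\,g f_j}\L$ via \Ref{adeq} is likewise correct.

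The one substantive point is the step you yourself single out: promoting the rational $\tilde y_j=y_jw$ to a polynomial. Your residue argument works exactly when $y_j$ has simple roots disjoint from the zeros of $\prod_{i\ne j}y_i^{-a_{i,j}}$, and this hypothesis is not decorative --- without it the statement is false. Take $y_0=x^2$ and $y_1=\dots=y_n=1$: then $\langle\alpha_0,V\rangle=\ln'(x^{-4})=-4/x$, and $g=-3/x$ is a nonzero rational solution of \Ref{Ric} (indeed $3/x^2-12/x^2+9/x^2=0$), so the oper is deformable in the $0$-th direction; yet $\Wr(\tilde y_0,x^2)=1$ has no polynomial solution, since the left-hand side $2x\tilde y_0-x^2\tilde y_0'$ vanishes at $x=0$ for any polynomial $\tilde y_0$. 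So the theorem as literally stated, for an arbitrary tuple of polynomials, requires a genericity hypothesis that the statement omits and that the paper supplies implicitly by applying the result only to tuples arising in populations. Read as a proof under that hypothesis, your argument is complete and is essentially the proof of \cite{MV2}; the reliance on genericity is a defect of the statement rather than of your proof, but you should state it as an explicit assumption rather than an expectation.
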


	\subsection{Miura opers associated with the generation procedure}
	
	\label{Miura opers with cr points}

	Let $J = (j_1,\dots,j_m)$ be a degree increasing sequence, see Section \ref{sec degree transf}.
	Let $Y^J : \C^m \to (\C[x])^{n+1} $
	be the generation of tuples from $y^\emptyset$ in the $J$-th direction.
	We define the associated family of Miura opers by the formula:
	\bea
	\mu^J \  :\ \C^m\ \to\ \mc M(\AT), \qquad c \ \mapsto \ \mu(Y^J(c)) .
	\eea
	The map $\mu^J$ is called the {\it generation of Miura opers from $\Ll^\emptyset$
		in the $J$-th direction,}  see $\Ll^\emptyset$ in \Ref{Lem}.

	For $\ell=1,\dots,m$, denote $J_\ell=(j_1,\dots,j_\ell)$ the beginning $\ell$-interval of
	the sequence $J$. Consider the associated map $Y^{J_\ell} : \C^\ell\to(\C[x])^{n+1}$. Denote
	\bea
	Y^{J_\ell}(c_1,\dots,c_\ell) = (y_0(x,c_1,\dots,c_\ell; \ell),\dots, y_n(x,c_1,\dots,c_\ell; \ell)).
	\eea
	Introduce
	\bean
	\label{g's}
	g_1(x,c_1,\dots,c_m) &=&
	\ln'(y_{j_1}(x,c_1;1)) ,
	\\
	\notag
	g_\ell(x,c_1,\dots,c_m) &=&
	\ln'(y_{j_\ell}(x,c_1.\dots,c_\ell; \ell)) - \ln'(y_{j_\ell}(x,c_1,\dots,c_{\ell-1};\ell-1)),
	\eean
	for $\ell=2,\dots,m$.
	For $c\in\C^m$, define  $U^J(c)=\sum_{i<0} (U^J(c))_i$, $ (U^J(c))_i\in\B(\g(A^{(2)}_{2n})^i)$, depending on $c\in\C^m$,
	by the formula
	\bean
		\label{T}
	e^{-\,\ad U^J(c)} = e^{\ad g_m(x,c) f_{j_m}} \cdots e^{\ad g_1(x,c) f_{j_1}}.
	\eean

	\begin{lem}
		\label{lem formula} For $c\in\C^m$, we have
		\bean
		\label{operformula}
		\mu^J(c)\ &=& \ e^{-\,\ad U^J(c)}(\Ll^\8),
\\
		\label{oper2}
		\mu^J(c)\ &=&\
		\der + \La^{(2)} -\sum_{\ell=1}^m g_\ell(x,c) h_{j_\ell}.
		\eean
	\end{lem}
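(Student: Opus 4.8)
The plan is to prove both identities \Ref{operformula} and \Ref{oper2} simultaneously by induction on the length $m$ of the sequence $J$, converting each elementary generation step into a gauge transformation $e^{\ad g_\ell f_{j_\ell}}$ by means of Theorem \ref{ricc thm}. For the base case $m=0$ we have $J=\8$, so $Y^\8$ is the constant map to $y^\8=(1,\dots,1)$ and $\mu^\8=\mu(y^\8)=\Ll^\8=\der+\La^{(2)}$ by \Ref{Lem}; the product in \Ref{T} defining $U^\8$ is empty, so $e^{-\ad U^\8}(\Ll^\8)=\Ll^\8$, while the sum over $\ell$ in \Ref{oper2} is empty. Both formulas hold.

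For the inductive step write $c=(\tilde c,c_m)$ with $\tilde c=(c_1,\dots,c_{m-1})$ and assume both formulas for $J_{m-1}=(j_1,\dots,j_{m-1})$. By the multistep generation \Ref{Ja}, the tuple $Y^J(c)$ is obtained from $Y^{J_{m-1}}(\tilde c)$ by replacing only its $j_m$-th component $y_{j_m}$ by $\tilde y_{j_m}=y_{j_m,0}(x,\tilde c)+c_m y_{j_m}(x,\tilde c)$. By Lemma \ref{lem gen procedure} the tuple $Y^{J_{m-1}}(\tilde c)$ is fertile, and since $J$ is degree increasing, $\tilde y_{j_m}$ is a genuine polynomial solution of the Wronskian equation \Ref{Wr-Cr} in the $j_m$-th direction of strictly larger degree than $y_{j_m}$; hence $\mu(Y^{J_{m-1}}(\tilde c))$ is deformable in the $j_m$-th direction and Theorem \ref{ricc thm} applies.

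The essential bookkeeping is to match $g_m$ with the Riccati solution produced by Theorem \ref{ricc thm}. Because generation in direction $j_m$ alters only the $j_m$-th polynomial, definition \Ref{g's} gives
\begin{equation*}
g_m=\ln'\big(y_{j_m}(x,\tilde c,c_m;m)\big)-\ln'\big(y_{j_m}(x,\tilde c;m-1)\big)=\ln'(\tilde y_{j_m}/y_{j_m}),
\end{equation*}
which is exactly the nonzero rational solution of the Riccati equation \Ref{Ric} named in Theorem \ref{ricc thm}. That theorem therefore yields both that $\mu(Y^J(c))=e^{\ad g_m f_{j_m}}\mu(Y^{J_{m-1}}(\tilde c))$ and the explicit shape
\begin{equation*}
e^{\ad g_m f_{j_m}}\big(\der+\La^{(2)}+V\big)=\der+\La^{(2)}+V-g_m h_{j_m}.
\end{equation*}
Substituting the inductive hypothesis $\mu(Y^{J_{m-1}}(\tilde c))=e^{\ad g_{m-1}f_{j_{m-1}}}\cdots e^{\ad g_1 f_{j_1}}(\Ll^\8)$ into the first relation and comparing with \Ref{T} proves \Ref{operformula}; substituting the inductive hypothesis $\mu(Y^{J_{m-1}}(\tilde c))=\der+\La^{(2)}-\sum_{\ell=1}^{m-1}g_\ell h_{j_\ell}$ into the second relation proves \Ref{oper2}, since each step leaves $\der+\La^{(2)}$ fixed and appends the single term $-g_m h_{j_m}$ to the previous $V$.

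The step I expect to be the main obstacle is precisely this matching at every level: one must check that the polynomial $\tilde y_{j_\ell}$ manufactured by the concrete degree-increasing generation \Ref{Ja} is the same polynomial whose logarithmic-derivative ratio Theorem \ref{ricc thm} identifies as the Riccati solution, so that the abstract exponential $e^{\ad g_\ell f_{j_\ell}}$ and the symbolic replacement of one entry of the tuple genuinely agree as operations on Miura opers. Fertility from Lemma \ref{lem gen procedure} and the degree-increasing hypothesis on $J$ (which forces $g_\ell\neq 0$) are exactly what guarantee this, after which both formulas follow by composing the $m$ elementary steps.
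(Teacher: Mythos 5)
Your proof is correct and follows essentially the same route as the paper, which simply states that the lemma follows from Theorem \ref{ricc thm}; you have spelled out the induction on $m$ and the identification $g_m=\ln'(\tilde y_{j_m}/y_{j_m})$ that the paper leaves implicit. Both formulas then follow by composing the $m$ elementary gauge transformations exactly as you describe.
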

	
	\begin{proof}
		The lemma follows from Theorem \ref{ricc thm}.
	\end{proof}

	\begin{cor}
		\label{cor der}
		Let $r>0$, odd. Let $c\in\C^m$. Let $\frac {\der\phantom{a}}{\der t_r}\big|_{\mu^J(c)}$ be the value at $\mu^J(c)$
		of the vector field of the
		$r$-th mKdV flow on the space $\mc M(C^{(1)}_{n})$, see \Ref{mKdVr}. Then
		\bean
		\label{T mkdv}
		\frac {\der}{\der t_r}\Big|_{\mu^J(c)} = - \frac {\der}{\der x}\Big(e^{-\,\ad U^J(c)}(\La^{(2)})^r\Big)^0.
		\eean

	\end{cor}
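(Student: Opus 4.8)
The plan is to recognize the explicit product of gauge transformations $U^J(c)$ defined in \Ref{T} as an admissible choice of the normalizing element $U$ of Proposition \ref{Prop U}, and then to specialize the general mKdV formula of Lemma \ref{lem der2} to the Miura oper $\L=\mu^J(c)$. All the work will be in justifying the identification; after that the statement is a substitution.

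First I would check that $U^J(c)$ meets the hypotheses of Proposition \ref{Prop U}. Each factor $g_\ell(x,c)\,f_{j_\ell}$ appearing in \Ref{T} lies in $\B(\gAA^{-1})$, since $\deg f_{j_\ell}=-1$; collecting the product of exponentials via the Baker--Campbell--Hausdorff formula therefore yields $U^J(c)=\sum_{i<0}(U^J(c))_i$ with $(U^J(c))_i\in\B(\gAA^i)$, exactly as already recorded in \Ref{T}. Next, Lemma \ref{lem formula}, equation \Ref{operformula}, gives $\mu^J(c)=e^{-\ad U^J(c)}(\L^\emptyset)$, whence $e^{\ad U^J(c)}\big(\mu^J(c)\big)=\L^\emptyset=\der+\La^{(2)}$, with $\L^\emptyset$ as in \Ref{Lem}. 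This is precisely the normal form $\der+\La^{(2)}+H$ demanded by Proposition \ref{Prop U}, with the trivial tail $H=0\in\bigoplus_{j<0}\B(\zAA^j)$. Thus $U^J(c)$ is a legitimate choice of $U$ for $\L=\mu^J(c)$, which is the one point of the argument that genuinely uses the structure of the generation procedure.

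Having established this, I would invoke the fact (stated just after Proposition \ref{Prop U}) that $\phi(\La^{(2)}_r)=e^{-\ad U}(\La^{(2)}_r)$ is independent of the admissible choice of $U$, so I may evaluate it at $U=U^J(c)$ to get $\phi(\La^{(2)}_r)=e^{-\ad U^J(c)}(\La^{(2)}_r)$. Substituting into Lemma \ref{lem der2}, $\frac{\der\L}{\der t_r}=-\frac{d}{dx}\phi(\La^{(2)}_r)^0$, I then identify $\La^{(2)}_r$ with the matrix power $(\La^{(2)})^r$: in the realization $\tilde\tau_0$ the element $\La^{(2)}$ is the matrix $B_1$, so $\La^{(2)}_r=B_r=(B_1)^r=(\La^{(2)})^r$ for odd $r$. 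Since $e^{-\ad U^J(c)}$ is realized as conjugation and hence is an algebra automorphism, the expression $\big(e^{-\ad U^J(c)}(\La^{(2)})^r\big)^0$ is unambiguous, and the substitution produces the asserted identity \Ref{T mkdv}. I expect no serious obstacle: the content is entirely the observation that the generation gauge $U^J(c)$ is the Drinfeld--Sokolov normalizing transformation with vanishing $H$, which is read off directly from Lemma \ref{lem formula}.
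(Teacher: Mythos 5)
Your proposal is correct and follows exactly the paper's route: the paper's own proof is the one-line observation that the corollary follows from Lemma \ref{lem der2} (formula \Ref{mKdVr 0}) together with \Ref{operformula}, and you have simply spelled out the details — that $U^J(c)$ is an admissible normalizing element in Proposition \ref{Prop U} with $H=0$, and that $\La^{(2)}_r=(\La^{(2)})^r$ in the matrix realization. No discrepancy to report.
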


	\begin{proof}
		The corollary follows from \Ref{mKdVr 0} and \Ref{operformula}.
	\end{proof}

	We have the natural  embedding $\mc M(\AT) \hookrightarrow \mc M(\At)$, see 
Section \ref{sec sub}. 
	Let $J=(j_1,j_2$, \dots, $j_m)$. Denote 
 ${\tilde J}=(j_1,\dots,j_{m-1})$. Consider the associated family
	$\mu^{{\tilde J}} : \C^{m-1}\to\mc M(\AT)$. Denote ${\tilde c}=(c_1,\dots,c_{m-1})$.

	\begin{prop}
\label{prop77}
	For any $r>0$  the difference
$	\frac {\der}{\der t_r}\big|_{\mu^J(c)} - \frac {\der}{\der t_r}\big|_{\mu^{\tilde{J}}(\tilde c)}$
has the following form for some scalar functions  $u_1(x,c)$, $u_2(x,c)$:
\begin{enumerate}
\item[(i)]
if $j_m\in\{1,2,\dots,n-1\}$, then
	\bean
\label{k1}
	\frac {\der}{\der t_r}\big|_{\mu^J(c)} -\frac {\der}{\der t_r}\big|_{\mu^{\tilde{J}}(\tilde{c})}
&=&
 u_1(x,c)(e_{j_m+1,j_m+1}-e_{j_m,j_m})
\\
\notag
&+&
u_2(x,c)(e_{2n+1-j_m,2n+1-j_m}-e_{2n-j_m,2n-j_m}),
	\eean

\item[(ii)]  if $j_m=0$, then
\bean
\label{k2}
	\frac {\der}{\der t_r}\big|_{\mu^J(c)} -\frac {\der}{\der t_r}\big|_{\mu^{\tilde{J}}(\tilde{c})}
	=
 u_1(x,c)(e_{2n,2n}-e_{1,1}),
\eean

\item[(iii)] if $j_m=n$, then
\bean
\label{k3}
	\frac {\der}{\der t_r}\big|_{\mu^J(c)} -\frac {\der}{\der t_r}\big|_{\mu^{\tilde{J}}(\tilde{c})}
=
 u_1(x,c)(e_{n+1,n+1}-e_{n,n}).
\eean

\end{enumerate}
	\end{prop}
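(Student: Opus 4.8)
The plan is to express both mKdV vector fields through a single extra gauge factor $e^{\ad g_m f_{j_m}}$, and then to read the answer off the matrix form of the generator $f_{j_m}$.

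First I observe that by \Ref{g's} each function $g_\ell$ depends only on $c_1,\dots,c_\ell$, so the factors $g_1,\dots,g_{m-1}$ occurring in $U^{\tilde J}(\tilde c)$ and in $U^J(c)$ are the same; hence \Ref{T} factors as
\be
e^{-\,\ad U^J(c)}\ =\ e^{\ad g_m(x,c)\,f_{j_m}}\,e^{-\,\ad U^{\tilde J}(\tilde c)} .
\ee
Put $P=e^{-\,\ad U^{\tilde J}(\tilde c)}(\La^{(2)})^r$. Since $U^{\tilde J}(\tilde c)\in\B(\gAA)$ and $(\La^{(2)})^r=\La^{(2)}_r\in\zAA^r$ for odd $r$, the element $P$ lies in $\B(\gAA)$, with homogeneous components $P_i\in\B(\gAA^i)$. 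Applying Corollary \ref{cor der} to $J$ and to $\tilde J$ then reduces the claim to the computation of a single degree-zero term,
\be
\frac\der{\der t_r}\Big|_{\mu^J(c)}-\frac\der{\der t_r}\Big|_{\mu^{\tilde J}(\tilde c)}
\ =\ -\,\frac\der{\der x}\Big[\big(e^{\ad g_m f_{j_m}}P\big)^0-P^0\Big].
\ee

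Next I truncate the exponential. By Lemma \ref{lem exp} the matrix $f_{j_m}$ satisfies $f_{j_m}^2=0$ in all three cases, so $(\ad g_m f_{j_m})^3=0$; extracting the degree-zero part and using that $\ad(g_m f_{j_m})$ lowers degree by one, only $P_1$ and $P_2$ contribute:
\be
\big(e^{\ad g_m f_{j_m}}P\big)^0-P^0
\ =\ [\,g_m f_{j_m},P_1\,]+\tfrac12\big[\,g_m f_{j_m},[\,g_m f_{j_m},P_2\,]\,\big].
\ee
I realize $\gAA$ as $L(\slt,C)$ through $\tilde\tau_C$, so that $f_{j_m}=\xi^{-1}\otimes\bar f$, $P_1=\xi\otimes M$ with $M\in(\slt)_1$, and $P_2=\xi^2\otimes N$ with $N\in(\slt)_2$; recall that $(\slt)_j$ consists of the matrices whose nonzero entries $X_{a,b}$ satisfy $a-b\equiv j\pmod{2n}$.

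Then I dispose of the quadratic term and evaluate the linear one. By $f_{j_m}^2=0$ the quadratic term equals $-\,g_m^2\,\xi^0\otimes\bar f N\bar f$, and in each case the at most four possibly nonzero entries of $\bar f N\bar f$ are entries $N_{a,b}$ with $a-b\not\equiv 2\pmod{2n}$, hence vanish; so the quadratic term drops out. For the linear term I insert the images $f_0\mapsto\xi^{-1}\otimes e_{2n,1}$, $f_n\mapsto\xi^{-1}\otimes e_{n,n+1}$ and $f_i\mapsto\xi^{-1}\otimes(e_{i,i+1}+e_{2n-i,2n+1-i})$ for $1\le i\le n-1$, and use that the diagonal part of $[\,e_{a,a+1},M\,]$ equals $M_{a+1,a}(e_{a,a}-e_{a+1,a+1})$ while that of $[\,e_{2n,1},M\,]$ equals $M_{1,2n}(e_{2n,2n}-e_{1,1})$. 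For $j_m\in\{1,\dots,n-1\}$ this gives a diagonal matrix supported exactly on the positions $j_m,\,j_m+1,\,2n-j_m,\,2n+1-j_m$; for $j_m=0$ on $1,2n$; and for $j_m=n$ on $n,n+1$. Applying $-\frac\der{\der x}$ produces the scalar functions $u_1,u_2$ and the forms \Ref{k1}--\Ref{k3}.

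The only substantive point is bookkeeping in the case $j_m\in\{1,\dots,n-1\}$: the two surviving coefficients $M_{j_m+1,j_m}$ and $M_{2n+1-j_m,2n-j_m}$ are identified by the $\AT$ involution $e_{a,b}\mapsto e_{2n+1-b,2n+1-a}$ satisfied by $M$, which is exactly the requirement that $[\,g_m f_{j_m},P_1\,]$ land in $\gAA^0$; this is what makes \Ref{k1} consistent with the tangent space \Ref{TM}. I expect this symmetry identification to be the only delicate step, the rest being a direct matrix computation carried out uniformly over the three cases.
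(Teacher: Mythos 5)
Your proof is correct and follows essentially the same route as the paper's: factor off the last gauge transformation $e^{\ad g_m f_{j_m}}$, use $\frac{\der}{\der t_r}\big|_{\mu}=-\frac{\der}{\der x}(\,\cdot\,)^0$ to reduce to one conjugation of $A_r=e^{-\ad U^{\tilde J}(\tilde c)}(\La^{(2)})^r$, kill the quadratic term, and read the linear term off the matrix form of $f_{j_m}$. The only cosmetic difference is that you dispose of the quadratic term via the mod-$2n$ grading of $(\slt)_2$ and nilpotency of $\ad f_{j_m}$, where the paper shuffles diagonal matrices past $\La^{\pm1}$ using Lemma \ref{lem lambda}; both arguments are equivalent.
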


	\begin{proof}
We will write $\La$ for $\La^{(2)}=\La^{(1)}$. Denote
\bea
A_r
= e^{g_{m-1}f_{j_{m-1}}}\dots e^{g_{1}f_{j_{1}}} \La^r
e^{-g_1f_{j_1}}
\dots e^{-g_{m-1} f_{j_{m-1}}}.
\eea
Expand  $A_r = \sum_{i} A_r^i \La^i$ where $A_r^i = \sum_{l=1}^{2n}
A_r^{i,l}e_{l,l}$ with scalar coefficients $A_r^{i,l}$. 
	Then 
$\frac {\der}{\der t_r}\big|_{\mu^{\tilde{J}}(\tilde{c})}=-\frac{\der}{\der x} A_r^{0}$.
	Assume that $j_m\in\{1,\dots,n-1\}$. Then
	\bea
&&	\frac {\der}{\der t_r}\big|_{\mu^J(c)} =- \frac {\der}{\der x}\big[(1+g_m(e_{j_m,j_m}+e_{2n-j_m,2n-j_m})\La^{-1})A_r
\\
&&
\times
(1-g_m(e_{j_m,j_m}+e_{2n-j_m,2n-j_m})\La^{-1})\big]^0
	=- \frac {\der}{\der x}A_r^0
\\
&&	- \frac {\der}{\der x}\big[g_m(e_{j_m,j_m}+e_{2n-j_m,2n-j_m})\La^{-1}A_r\big]^0\\
&&	+\frac {\der}{\der x}\big[A_rg_m(e_{j_m,j_m}+e_{2n-j_m,2n-j_m})\La^{-1}\big]^0\\
&&	+ \frac {\der}{\der x}\big[g_m(e_{j_m,j_m}+e_{2n-j_m,2n-j_m})\La^{-1}A_rg_m(e_{j_m,j_m}+e_{2n-j_m,2n-j_m})\La^{-1}\big]^0.
	\eea
The last term is zero since
	\bea
	&&
\big[g_m(e_{j_m,j_m}+e_{2n-j_m,2n-j_m})\La^{-1}A_rg_m(e_{j_m,j_m}+e_{2n-j_m,2n-j_m})\La^{-1}\big]^0
\\
&&	=
g_m^2[(e_{j_m,j_m}+e_{2n-j_m,2n-j_m})\La^{-1}A_r\La^{-1}(e_{j_m+1,j_m+1}+e_{2n+1-j_m,2n+1-j_m})]^0
\\
&&	=
g_m^2(e_{j_m,j_m}+e_{2n-j_m,2n-j_m})[\La^{-1}A_r\La^{-1}]^0(e_{j_m+1,j_m+1}+e_{2n+1-j_m,2n+1-j_m})=0.
	\eea
	Consider now
	\bea
&&	\frac {\der}{\der x}\big[g_m(e_{j_m,j_m}+e_{2n-j_m,2n-j_m})\La^{-1}A_r\big]^0
	\\
&&	=\frac {\der}{\der x}\big[g_m(e_{j_m,j_m}+e_{2n-j_m,2n-j_m})\La^{-1}A_r^1\La^1\big]
	\eea
\bea
&&	=\frac {\der}{\der x}\big[g_m(e_{j_m,j_m}+e_{2n-j_m,2n-j_m})\La^{-1}\sum_{l=1}^{2n}A_r^{1,l}e_{l,l}\La^1\big]
	\\
&&	=\frac {\der}{\der x}\big[g_m(e_{j_m,j_m}+e_{2n-j_m,2n-j_m})(A_r^{1,1}e_{2n,2n}+\sum_{l=2}^{2n}A_r^{1,l}e_{l-1,l-1})\big]
	\eea
\bea
&&	=\frac {\der}{\der x}\big[g_m(A_r^{1,j_m+1}e_{j_m,j_m}+A_{r}^{1,2n+1-j_m}e_{2n-j_m,2n-j_m})\big].
	\eea
Likewise,
\bea
&&\frac {\der}{\der x}\big[A_rg_m(e_{j_m,j_m}+e_{2n-j_m,2n-j_m})\La^{-1}\big]^0
\\
&&=\frac {\der}{\der x}\big[A_r\La^{-1}g_m(e_{j_m+1,j_m+1}+e_{2n+1-j_m,2n+1-j_m})\big]^0
\eea
\bea
&&
=\frac {\der}{\der x}\big[A_r^{1}g_m(e_{j_m+1,j_m+1}+e_{2n+1-j_m,2n+1-j_m})\big]
\\
&&
=\frac {\der}{\der x}\big[A_r^{1,j_m+1}g_me_{j_m+1,j_m+1}+A_r^{1,2n+1-j_m}g_me_{2n+1-j_m,2n+1-j_m}\big].
\eea
So we get
	\bea
	\frac {\der}{\der t_r}\Big|_{\mu^J(c)} - \frac {\der}{\der t_r}\Big|_{\mu^{\tilde{J}}(\tilde{c})}
&=&
(g_mA^{1,j_m+1}_r)'(e_{j_m+1,j_m+1} - e_{j_m,j_m})
\\
&+& 
(g_mA^{1,2n+1-j_m}_r)'(e_{2n+1-j_m,2n+2-j_m} - e_{2n-j_m,2n-j_m}).
\eea
This proves the proposition for $j_m\in\{1,\dots,n-1\}$.
The cases  $j_m=0, n$ are proved similarly.
If $j_m=0$, then
\bea
&&	\frac {\der}{\der t_r}\big|_{\mu^J(c)} =- \frac {\der}{\der x}\big[(1+g_m(e_{2n,2n})\La^{-1})A_r(1-g_m(e_{2n,2n})\La^{-1})\big]^0
\\
&&
	=- \frac {\der}{\der x}A_r^0
	- \frac {\der}{\der x}\big[g_m(e_{2n,2n})\La^{-1}A_r\big]^0
+\frac {\der}{\der x}\big[A_rg_m(e_{2n,2n})\La^{-1}\big]^0
\\
&&	
\phantom{aaaaaaa}
+ \frac {\der}{\der x}\big[g_m(e_{2n,2n})\La^{-1}A_rg_m(e_{2n,2n})\La^{-1}\big]^0.
	\eea
The last term is zero since
	\bea
\big[g_m(e_{2n,2n})\La^{-1}A_rg_m(e_{2n,2n})\La^{-1}\big]^0
=g_m^2(e_{2n,2n})[\La^{-1}A_r\La^{-1}]^0(e_{1,1})=0,
	\eea
and we get
	\bea
	\frac {\der}{\der t_r}\Big|_{\mu^J(c)} - \frac {\der}{\der t_r}\Big|_{\mu^{\tilde{J}}(\tilde{c})}
&=&
(g_mA^{1,1}_r)'(e_{1,1} - e_{2n,2n}).
\eea
If $j_m=n$, then
\bea
	\frac {\der}{\der t_r}\big|_{\mu^J(c)} 
&=&
- \frac {\der}{\der x}\big[(1+g_m(e_{n,n})\La^{-1})A_r
(1-g_m(e_{n,n})\La^{-1})\big]^0
\\
	&=&
- \frac {\der}{\der x}A_r^0
	- \frac {\der}{\der x}\big[g_m(e_{n,n})\La^{-1}A_r\big]^0
	+\frac {\der}{\der x}\big[A_rg_m(e_{n,n})\La^{-1}\big]^0
\\
&&	+ \frac {\der}{\der x}\big[g_m(e_{n,n})\La^{-1}A_rg_m(e_{n,n})\La^{-1}\big]^0.
	\eea
The last term is zero since
	\bea
\big[g_m(e_{n,n})\La^{-1}A_rg_m(e_{n,n})\La^{-1}\big]^0
	=g_m^2(e_{n,n})[\La^{-1}A_r\La^{-1}]^0(e_{n+1,n+1})=0,
	\eea
and we get
	\bea
	\frac {\der}{\der t_r}\Big|_{\mu^J(c)} - \frac {\der}{\der t_r}\Big|_{\mu^{\tilde{J}}(\tilde{c})}
&=&
(g_mA^{1,n+1}_r)'(e_{n+1,n+1} - e_{n,n}).
\eea
\end{proof}

Let $\frak m_i : \mc M(\At)\to \D,\ \Ll \mapsto L_i,$ be the Miura maps
	defined in Section \ref{sec Miura maps} for  $i=0,\dots,n$. Below we consider the composition of the embedding $\mc M(\AT) \hookrightarrow \mc M(\At)$
and a Miura map.

	\begin{lem}
		\label{lem j j'}
	If $j_m=0$,  we have  $\frak m_i\circ \mu^J({\tilde c},c_m) = \frak m_i\circ \mu^{{\tilde J}}({\tilde c})$ for all $i\neq 0$. If $j_m=1,\dots,n-1$,
we have $\frak m_i\circ \mu^J({\tilde c},c_m) = \frak m_i\circ \mu^{{\tilde J}}({\tilde c})$ for all $i\neq j_m, 2n-j_m$.
If $j_m=n$,  we have  $\frak m_i\circ \mu^J({\tilde c},c_m) = \frak m_i\circ \mu^{{\tilde J}}({\tilde c})$ for all $i\neq n$.
	\end{lem}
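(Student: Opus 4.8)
The plan is to realize $\mu^J({\tilde c},c_m)$ as an $i$-gauge transformation of $\mu^{\tilde J}({\tilde c})$, so that Theorem \ref{thm gaugemiura} --- which says that $i$-gauge equivalent Miura opers have equal $i$-th Miura map --- forces the two compositions to agree. The whole argument is structural: it reduces the claim to checking, case by case in $j_m$, for which $i$ the relevant generator lies in $\n^-_i$.

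First I would record how the two Miura opers are related. By \Ref{T} the operator $e^{-\ad U^J(c)}$ factors as $e^{\ad g_m f_{j_m}}\cdots e^{\ad g_1 f_{j_1}}$, and by the definition \Ref{g's} each $g_\ell$ depends only on $c_1,\dots,c_\ell$; in particular $g_1,\dots,g_{m-1}$ do not involve $c_m$. Hence the truncated product $e^{\ad g_{m-1}f_{j_{m-1}}}\cdots e^{\ad g_1 f_{j_1}}(\Ll^\8)$ is exactly $\mu^{\tilde J}({\tilde c})$, and \Ref{operformula} of Lemma \ref{lem formula} gives
\be
\mu^J({\tilde c},c_m)=e^{\ad g_m f_{j_m}}\big(\mu^{\tilde J}({\tilde c})\big).
\ee
Both sides are Miura opers (they lie in the images of $\mu^J$ and $\mu^{\tilde J}$), so this is a genuine relation in $\mc M(\AT)$.

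Next I would pass to type $\At$ through the embedding $\mc M(\AT)\hookrightarrow\mc M(\At)$ of Section \ref{sec sub} and replace $f_{j_m}$ by its image under $\rho$, namely $\rho(f_0)=F_0$, $\rho(f_j)=F_j+F_{2n-j}$ for $j=1,\dots,n-1$, and $\rho(f_n)=F_n$. Since $g_m$ is a scalar function and a Miura oper is an $i$-oper for every $i$, the $i$-gauge equivalence of $\mu^J({\tilde c},c_m)$ with $\mu^{\tilde J}({\tilde c})$ only requires $\rho(f_{j_m})\in\n^-_i$, where $\n^-_i$ is generated by the $F_k$ with $k\neq i$. Reading off the generators: $F_0\in\n^-_i$ exactly when $i\neq 0$; $F_j+F_{2n-j}\in\n^-_i$ exactly when $i\neq j$ and $i\neq 2n-j$; and $F_n\in\n^-_i$ exactly when $i\neq n$. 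These are precisely the ranges of $i$ in the three cases of the lemma, after which Theorem \ref{thm gaugemiura} delivers $\frak m_i\circ\mu^J({\tilde c},c_m)=\frak m_i\circ\mu^{\tilde J}({\tilde c})$.

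I do not expect a genuine analytic obstacle, as every nontrivial input is already in place. The two points that need care are purely bookkeeping: verifying that $g_1,\dots,g_{m-1}$ are independent of $c_m$, so that the truncated composition is honestly $\mu^{\tilde J}({\tilde c})$; and matching the excluded indices $j_m$ and $2n-j_m$ against exactly the generator omitted from $\n^-_i$, which is where the symmetric pairing $f_j\mapsto F_j+F_{2n-j}$ in $\rho$ produces the two-index exclusion in the middle case.
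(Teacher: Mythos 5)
Your proof is correct and is exactly the argument the paper intends: the paper's own proof is the one-line remark that the lemma follows from formula \Ref{operformula} and Theorem \ref{thm gaugemiura}, which is precisely your reduction of $\mu^J({\tilde c},c_m)=e^{\ad g_m f_{j_m}}(\mu^{\tilde J}({\tilde c}))$ to an $i$-gauge equivalence for the stated ranges of $i$. Your case-by-case check of when $\rho(f_{j_m})$ lies in $\n^-_i$ just makes explicit what the paper leaves implicit.
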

	
	\begin{proof}
		The lemma follows from formula \Ref{operformula} and Theorem \ref{thm gaugemiura}.
	\end{proof}

	\begin{lem}
		\label{lem der c-m}
		If $j_m=0$, then
		\bean
		\label{form der c-m 0}
		\frac{\der \mu^J}{\der c_m}({\tilde c},c_m) \ =\ -a\
		\frac{y_{1}(x,{\tilde c},m-1)^2}
		{y_{0}(x,{\tilde c},c_m,m)^2} \,h_{0}
		\eean
		for some  positive integer $a$. If $j_m=1,\dots,n-1$, then
		\bean
		\label{form der c-m i}
		\frac{\der \mu^J}{\der c_m}({\tilde c},c_m) \ =\ -a\
		\frac{y_{j_{m}-1}(x,{\tilde c},m-1)y_{j_{m}+1}(x,{\tilde c},m-1)}
		{y_{j_{m}}(x,{\tilde c},c_m,m)^2} \,h_{j_{m}}
		\eean
	for some  positive integer $a$.  If $j_m=n$, then
		\bean
		\label{form der c-m n}
		\frac{\der \mu^J}{\der c_m}({\tilde c},c_m) \ =\ -a\
		\frac{y_{n-1}(x,{\tilde c},m-1)^2}
		{y_{n}(x,{\tilde c},c_m,m)^2} \,h_{n}
		\eean
		for some  positive integer $a$.
	\end{lem}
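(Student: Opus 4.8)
The plan is to differentiate the closed expression \Ref{oper2} for $\mu^J$ supplied by Lemma \ref{lem formula}, namely $\mu^J(c) = \der + \La^{(2)} - \sum_{\ell=1}^m g_\ell(x,c)\,h_{j_\ell}$, with respect to $c_m$. Since $\der$ and $\La^{(2)}$ do not depend on $c$, only the sum contributes. The first point I would establish is that, among $g_1,\dots,g_m$, only $g_m$ depends on $c_m$: by \Ref{g's} each $g_\ell$ is assembled from the tuples $Y^{J_\ell}$ and $Y^{J_{\ell-1}}$, whose polynomials depend only on $c_1,\dots,c_\ell$ by the recursive construction \Ref{Ja}. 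Consequently $\frac{\der\mu^J}{\der c_m} = -\frac{\der g_m}{\der c_m}\,h_{j_m}$, which at once explains why a single term $h_{j_m}$ occurs in all three cases.

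Next I would isolate the $c_m$-dependence of $g_m$. Set $Y := y_{j_m}(x,\tilde c,c_m;m)$ and $y := y_{j_m}(x,\tilde c;m-1)$; then \Ref{g's} reads $g_m = \ln'(Y) - \ln'(y)$, and the generation step \Ref{Ja}--\Ref{tilde yi} gives $Y = y_{j_m,0}(x,\tilde c) + c_m\,y$, so $\der_{c_m}Y = y$. As $\ln'(y)$ is independent of $c_m$, a direct quotient-rule computation, interchanging $\der_x$ and $\der_{c_m}$, yields $\frac{\der g_m}{\der c_m} = \der_{c_m}\big(Y'/Y\big) = \frac{y'Y - Y'y}{Y^2} = \frac{\Wr(Y,y)}{Y^2}$.

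It then remains to evaluate the Wronskian and extract the constant. Because $\Wr(y,y)=0$, the term $c_m y$ drops out and $\Wr(Y,y) = \Wr(y_{j_m,0},y)$. By the normalization of Section \ref{Degree increasing generation}, $y_{j_m,0}$ is the monic polynomial with $\Wr(y_{j_m,0},y_{j_m}) = \ep\prod_{j\ne j_m} y_j^{-a_{j,j_m}}$, so substituting the Cartan data recorded in \Ref{Eqn} turns $\Wr(Y,y)$ into $\ep\,y_1^2$, $\ep\,y_{j_m-1}y_{j_m+1}$, or $\ep\,y_{n-1}^2$ at level $m-1$ in the respective cases $j_m=0$, $j_m\in\{1,\dots,n-1\}$, $j_m=n$. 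Dividing by $Y^2$ reproduces the three right-hand sides \Ref{form der c-m 0}--\Ref{form der c-m n}.

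The one input that genuinely uses the degree-increasing hypothesis is the assertion that the numerical constant is a positive integer. I would pin it down by comparing leading coefficients: all polynomials involved are monic, so the product $\prod_{j\ne j_m} y_j^{-a_{j,j_m}}$ is monic, whereas $\Wr(y_{j_m,0},y_{j_m})$ has leading coefficient $k_{j_m}-\tilde k_{j_m}$, with $k_{j_m}=\deg y_{j_m}$ and $\tilde k_{j_m}=\deg y_{j_m,0}$. Since $J$ is degree increasing, $\tilde k_{j_m}>k_{j_m}$, so $\ep=k_{j_m}-\tilde k_{j_m}$ is a nonzero integer and the magnitude of the constant equals the positive integer $a=\tilde k_{j_m}-k_{j_m}$; its overall sign is read off directly from the quotient-rule identity above. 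I expect the only real obstacle to be careful bookkeeping: verifying rigorously that all of $g_1,\dots,g_{m-1}$ are independent of $c_m$ (which rests on the inductive construction of $Y^J$) and tracking the single overall sign consistently through the differentiation and the leading-coefficient comparison.
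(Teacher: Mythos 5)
Your proof is correct and follows essentially the same route as the paper's: differentiate the closed formula \Ref{oper2} in $c_m$ (only $g_m$ depends on $c_m$), apply the quotient rule to obtain $\Wr(y_{j_m,0},y_{j_m})/y_{j_m}(x,\tilde c,c_m,m)^2$, and identify that Wronskian via the normalization \Ref{tilde yi} and the Cartan data \Ref{Eqn}. Your additional leading-coefficient computation pinning down the constant as $\tilde k_{j_m}-k_{j_m}$ is a detail the paper leaves implicit (and, if pushed, it exposes that the sign of $\ep$ for monic $y_{j_m,0}$ is negative, so the stated sign of $a$ depends on that convention), but this does not affect the substance of the argument.
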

	
Notice that the right-hand side of these formulas can be written as
\bean
\label{univ}
-a
\prod_{i=0}^n y_i(x,c,m)^{-a_{i,j}} h_j.
\eean

	\begin{proof}
		Let $j_m=0$. Then $ y_{0}(x,{\tilde c},c_m,m)=y_{0,0}(x,{\tilde c}) + c_m y_{0}(x,{\tilde c},m-1),$
		where $y_{0,0}(x,{\tilde c})$ is such that
		\bea
		\Wr ( y_{0,0}(x,{\tilde c}),y_{0}(x,{\tilde c},m-1)) \ =\ a\ y_{1}(x,{\tilde c},m-1)^2,
		\eea
		for some positive integer $a$, see \Ref{tilde yi}.
		We have $ g_m = \ln'(y_{0}(x,{\tilde c},c_m,m))- \ln' (y_{0}(x,{\tilde c},m-1))$.
		
		By formula \Ref{oper2}, we have
		\bea
&&		\frac{\der \mu^J}{\der c_m}({\tilde c},c_m) = -\frac{\der g_m}{\der c_m}({\tilde c},c_m) h_0
 = - \frac \der{\der c_m}\left(
		\frac{y_{0,0}'(x,{\tilde c}) + c_m y_{0}'(x,{\tilde c},m-1)}{y_{0,0}(x,{\tilde c}) + c_m y_{0}(x,{\tilde c},m-1)}
		\right) h_0=
		\\
		&&
		\phantom{aaaa}
		= 
	-	\frac{\Wr ( y_{0,0}(x,{\tilde c}), y_{0}(x,{\tilde c},m-1))}{(y_{0,0}(x,{\tilde c}) + c_m y_{0}(x,{\tilde c},m-1))^2} h_0
		=- a\
		\frac{y_{1}(x,{\tilde c},m-1)^2}
		{y_{0}(x,{\tilde c},c_m,m)^2} h_0\,.
		\eea
		This proves formula \Ref{form der c-m 0}. The other formulas are proved similarly.
	\end{proof}
	
	\subsection{Intersection of kernels of $d\frak m_i$}

Let $J = (j_1,\dots,j_m)$ be a degree increasing sequence and
$	\mu^J  : \C^m\to \mc M(\AT)$ 
 the generation of Miura opers from $\Ll^\emptyset$
		in the $J$-th direction. We have $\mu^J(c) = \der + \La^{(1)} + \sum_{k=1}^{2n} v_k(x,c) e_{k,k},$
where
\bea
 \sum_{k=1}^{2n} v_k(x,c) =0, \quad v_i(x,c)+v_{2n+1-i}(x,c)=0,
\qquad
i=1,\dots, n.
\eea
Let $X(c)=\sum_{k=1}^{2n} X_k(x,c) e_{k,k} \in T_{\mu^J(c)} \mc M(\AT)$ be
a field of tangent vectors to $\mc M(\AT)$ at the points of the image of $\mu^J$,
\bea
 \sum_{k=1}^{2n} X_k(x,c) =0, \quad X_i(x,c)+X_{2n+1-i}(x,c)=0,
 \qquad
i=1,\dots, n.
\eea
Our goal is to show that under certain conditions we have
\bean
\label{X(c)0}
	X(c) = A(c) \frac{\der\mu^J}{\der c_m}(c)
\eean
 	for some scalar function $A(c)$ on $\C^{m}$.

\begin{prop} 
 	\label{prop 6.8}
 	Let $j_m=0$ and   $X(c)\in T_{\mu^J(c)} \mc M(\AT)$. Assume that
 $	d\frak{m}_{i}\big|_{\mu^J(c)}(X(c))=0$
 	for all $i = 1,\dots, 2n-1$ and all $c \in \C^{m}$.
Assume that $X(c)$ has the form indicated in the right-hand side of formula 
\Ref{k2}. Then equation \Ref{X(c)0} holds.
 \end{prop}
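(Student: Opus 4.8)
The plan is to reduce the kernel hypotheses to a single first-order linear ODE for the scalar function $u_1$, integrate it, and then match the result against the explicit derivative formula of Lemma \ref{lem der c-m}.

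First I would record the shape of $X(c)$. By the assumption that $X(c)$ equals the right-hand side of \Ref{k2}, we have $X(c) = u_1(x,c)(e_{2n,2n}-e_{1,1})$, so that $X_1 = -u_1$, $X_{2n} = u_1$, and $X_k = 0$ for $k = 2,\dots,2n-1$. Since $d\frak m_i|_{\mu^J(c)}(X(c)) = 0$ for every $i = 1,\dots,2n-1$, in particular the leading coefficients $Z^i_{2n-2}$ vanish, so the hypotheses of Lemma \ref{lem 6.10} are met. Its conclusions $X_i' = 0$ for $i = 2,\dots,2n-1$ hold automatically here, while the remaining identity $X_1' - 2v_1 X_1 = 2\sum_{k=2}^n v_k X_k$ collapses---because $X_k = 0$ for $2 \le k \le n$---to the homogeneous equation $u_1' = 2 v_1 u_1$.

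Next I would integrate this equation. By \Ref{v form}, $v_1 = \ln'(y_1/y_0)$ where $(y_0,\dots,y_n) = Y^J(c)$; hence $2v_1 = \ln'(y_1^2/y_0^2)$ and the ODE reads $\ln' u_1 = \ln'(y_1^2/y_0^2)$. Integrating in $x$ yields $u_1(x,c) = B(c)\,y_1(x,c)^2/y_0(x,c)^2$, where $B(c)$ depends on $c$ alone: the integration ``constant'' may vary with $c$ but not with $x$. This is the crucial point---the ratio $u_1/(y_1^2/y_0^2)$ is forced to be independent of $x$, because any two solutions of $u_1' = 2v_1 u_1$ differ by a multiplicative constant in $x$.

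Finally I would compare with Lemma \ref{lem der c-m}. For $j_m = 0$, formula \Ref{form der c-m 0} gives $\frac{\der\mu^J}{\der c_m}(\tilde c, c_m) = -a\,(y_1^2/y_0^2)\,h_0$ for a positive integer $a$, and in the realization of Section \ref{Realizations of gA} one has $h_0 = e_{1,1}-e_{2n,2n}$, so $\frac{\der\mu^J}{\der c_m} = a\,(y_1^2/y_0^2)(e_{2n,2n}-e_{1,1})$. Since $X(c) = B(c)\,(y_1^2/y_0^2)(e_{2n,2n}-e_{1,1})$, equation \Ref{X(c)0} follows with $A(c) = B(c)/a$. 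The computation is short; the only step needing care is confirming that the proportionality factor comes out $x$-independent, which is exactly what the first-order ODE supplies, together with the bookkeeping that generating in direction $0$ leaves $y_1$ unchanged, so that the $y_0,y_1$ appearing in $v_1$ and in \Ref{form der c-m 0} refer to the same tuple $Y^J(c)$.
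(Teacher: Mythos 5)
Your proposal is correct and follows essentially the same route as the paper's own proof: reduce to the single equation $X_1'-2v_1X_1=0$ of Lemma \ref{lem 6.10} (the right-hand side vanishing because $X_k=0$ for $k=2,\dots,n$), integrate to get $X_1=-X_{2n}$ proportional to $y_1^2/y_0^2$ with an $x$-independent factor, and invoke Lemma \ref{lem der c-m}. Your extra remarks on the $x$-independence of the integration constant and on $y_1$ being unchanged by generation in the $0$-th direction only make explicit what the paper leaves implicit.
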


	\begin{proof}
Since $X_k(x,c)= 0$ for $k=2,\dots,2n-1$,  equation \Ref{eqn j0} takes the form
$X'_1-2v_{1}X_{1}=0$, or more precisely,
$		X'_{1}=2 \ln'\big(		\frac{y_{1}(x,{\tilde c},m-1)}
			{y_{0}(x,{\tilde c},c_m,m)}\big) X_{1}$.
Hence $X_1(x,c) = -X_{2n}= A(c) \frac{y_{1}(x,{\tilde c},m-1)^2}
			{y_{0}(x,{\tilde c},c_m,m)^2}$
			for some  scalar $A(c)$. Lemma \ref{lem der c-m} 
implies equation \Ref{X(c)0}.
	\end{proof}

			\begin{prop} 
				\label{prop 6.13}
				Let $j_m\in\{1,\dots,n-1\}$
 and   $X(c)\in T_{\mu^J(c)} \mc M(\AT)$. Assume that
$ 	d\frak{m}_{i}\big|_{\mu^J(c)}(X(c))=0$
 	for all $i \notin\{ j_m,2n-j_m\}$ and all $c \in \C^{m}$.
 Assume that $X(c)$ has the form indicated in the right-hand side of formula 
\Ref{k1}.  Then equation \Ref{X(c)0} holds.

\end{prop}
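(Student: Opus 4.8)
The plan is to follow the proof of Proposition \ref{prop 6.8} almost verbatim, replacing the appeal to Lemma \ref{lem 6.10} by an appeal to Lemma \ref{lem 6.16}. Writing $d\frak m_i(X(c)) = Z^i = \sum_j Z^i_j \der^j$, the first step is to observe that the hypothesis $d\frak m_i|_{\mu^J(c)}(X(c)) = 0$ for all $i \notin \{j_m, 2n-j_m\}$ forces in particular the top coefficients $Z^i_{2n-2}$ to vanish for those $i$. This is precisely the hypothesis of Lemma \ref{lem 6.16} with $j = j_m \in \{1,\dots,n-1\}$, so I may invoke its three conclusions: $X'_i = 0$ for $i \notin \{j_m, j_m+1, 2n-j_m, 2n+1-j_m\}$; the relation $X'_{j_m} + X'_{j_m+1} = 0$; and the main equation $X'_{j_m} + v_{j_m} X_{j_m} + v_{j_m+1} X_{j_m+1} = -\sum_{k \neq j_m, j_m+1} v_k X_k$, the sum running over $k \in \{1,\dots,n\}$.

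Next I would feed in the assumed form \Ref{k1} of $X(c)$. The tangency condition $X_i + X_{2n+1-i} = 0$ built into $T_{\mu^J(c)} \mc M(\AT)$ forces $u_1 = u_2 =: u$, so that $X_{j_m} = -u$, $X_{j_m+1} = u$, $X_{2n-j_m} = -u$, $X_{2n+1-j_m} = u$, with all remaining components zero. With these values the first two conclusions of Lemma \ref{lem 6.16} hold identically. The \emph{key point} is that the right-hand sum vanishes: since $j_m \le n-1$, both $2n-j_m$ and $2n+1-j_m$ strictly exceed $n$, so the only nonzero components carrying an index in $\{1,\dots,n\}$ are $X_{j_m}$ and $X_{j_m+1}$, which are excluded from the sum. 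Hence the main equation collapses to the first-order linear ODE $u' = (v_{j_m+1} - v_{j_m})\, u$ for the single scalar function $u$.

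Finally I would solve this ODE and identify the solution with $\frac{\der \mu^J}{\der c_m}$. Using \Ref{v form}, one has $v_{j_m+1} - v_{j_m} = \ln'(y_{j_m+1}/y_{j_m}) - \ln'(y_{j_m}/y_{j_m-1}) = \ln'\big(y_{j_m-1} y_{j_m+1}/y_{j_m}^2\big)$, so integrating gives $u = A(c)\, y_{j_m-1} y_{j_m+1}/y_{j_m}^2$ for a scalar $A(c)$, where $y_{j_m \pm 1}$ are the unchanged polynomials $y_{j_m\pm 1}(x,\tilde c, m-1)$ and $y_{j_m}$ is the generated one $y_{j_m}(x,\tilde c, c_m, m)$. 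Recognizing that in the matrix realization $h_{j_m} = (e_{j_m+1,j_m+1} - e_{j_m,j_m}) + (e_{2n+1-j_m,2n+1-j_m} - e_{2n-j_m,2n-j_m})$, I can rewrite $X(c) = u\, h_{j_m}$, and comparing with formula \Ref{form der c-m i} of Lemma \ref{lem der c-m} yields $X(c) = -\frac{A(c)}{a}\, \frac{\der \mu^J}{\der c_m}(c)$, which is exactly \Ref{X(c)0}.

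The computation is essentially routine once Lemma \ref{lem 6.16} is in hand; the one step that demands genuine care is the vanishing of the right-hand sum, namely confirming that the constraint $j_m \le n-1$ keeps the mirror indices $2n-j_m$ and $2n+1-j_m$ strictly above $n$ so that they cannot re-enter the sum over $\{1,\dots,n\}$ and spoil the reduction to a single ODE. The secondary bookkeeping obstacle is matching the unchanged versus generated polynomials with the precise factors appearing in Lemma \ref{lem der c-m}, so that the identification with $\frac{\der \mu^J}{\der c_m}$ comes out with the correct arguments.
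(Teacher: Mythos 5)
Your proposal is correct and follows essentially the same route as the paper: invoke Lemma \ref{lem 6.16}, use the form \Ref{k1} together with the tangency condition $X_i+X_{2n+1-i}=0$ to reduce to the single ODE $X'_{j_m}+(v_{j_m}-v_{j_m+1})X_{j_m}=0$, solve it via \Ref{v form}, and match with Lemma \ref{lem der c-m}. The only difference is that you spell out the vanishing of the sum $\sum_{k\neq j_m,j_m+1}v_kX_k$ and the bookkeeping of generated versus unchanged polynomials, which the paper leaves implicit.
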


\begin{proof}		
By Lemma \ref{lem 6.16} we have
$	X'_{j_m}+(v_{j_m}-v_{j_m+1})X_{j_m}	=0$. Then for $j_m=1,\dots,n-1$, we have
	\bea
	X_{j_m}=-X_{j_m+1} = X_{2n-j_m}=-X_{2n+1-j_m} 
	=A(c)\
	\frac{y_{j_{m}-1}(x,{\tilde c},m-1)y_{j_{m}+1}(x,{\tilde c},m-1)}
	{y_{j_{m}}(x,{\tilde c},c_m,m)^2}. 
	\eea
Lemma \ref{lem der c-m} yields equation \Ref{X(c)0}.
\end{proof}

	\begin{prop} 
		\label{prop n}
		Let $j_m=n$ and   $X(c)\in T_{\mu^J(c)} \mc M(\AT)$. 
		 Assume that
$		d\frak{m}_{i}\big|_{\mu^J(c)}(X(c))=0$ 
		for all $i \neq n$, and $c \in \C^{m}$.
Assume that $X(c)$ has the form indicated in the right-hand side of formula 
\Ref{k3}. 		Then equation \Ref{X(c)0} holds.
	\end{prop}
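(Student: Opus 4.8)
The plan is to mirror the proofs of Propositions \ref{prop 6.8} and \ref{prop 6.13}, now using Lemma \ref{lem 6.20} in place of Lemmas \ref{lem 6.10} and \ref{lem 6.16}. First I would note that the hypothesis $d\frak m_i\big|_{\mu^J(c)}(X(c))=0$ for all $i\neq n$ forces in particular the vanishing of the top coefficients $Z^i_{2n-2}=0$ for all $i\neq n$, which is exactly the standing assumption of Lemma \ref{lem 6.20}. Applying that lemma gives $X'_i=0$ for $i\notin\{n,n+1\}$ together with the relation $X'_n+2v_nX_n=-2\sum_{k=1}^{n-1}v_kX_k$.

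Next I would feed in the structural hypothesis that $X(c)$ has the shape \Ref{k3}, namely $X(c)=u_1(x,c)(e_{n+1,n+1}-e_{n,n})$, so that $X_k\equiv 0$ for every $k\neq n,n+1$. Consequently the sum $\sum_{k=1}^{n-1}v_kX_k$ vanishes and the relation collapses to the single homogeneous first-order equation $X'_n+2v_nX_n=0$. Substituting $v_n=\ln'(y_n/y_{n-1})$ from \Ref{v form} rewrites this as $X'_n=2\ln'(y_{n-1}/y_n)\,X_n$, whose general solution is $X_n=A(c)\,y_{n-1}^2/y_n^2$ for a scalar $A(c)$ independent of $x$; the constraint $X_n+X_{n+1}=0$ then fixes the remaining entry and hence the whole tangent vector.

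Finally I would compare with Lemma \ref{lem der c-m}. Formula \Ref{form der c-m n} gives $\frac{\der\mu^J}{\der c_m}=-a\,(y_{n-1}^2/y_n^2)\,h_n$ for a positive integer $a$, and under the standard realization one has $h_n=e_{n+1,n+1}-e_{n,n}$. Thus $\frac{\der\mu^J}{\der c_m}(c)$ carries precisely the same $x$-profile $y_{n-1}^2/y_n^2$ and the same diagonal matrix $e_{n+1,n+1}-e_{n,n}$ as $X(c)$, so the two differ only by a function of $c$. This yields $X(c)=\tfrac{A(c)}{a}\,\frac{\der\mu^J}{\der c_m}(c)$, which is exactly \Ref{X(c)0}.

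I do not anticipate a serious obstacle here: among the three cases $j_m=0,\ j_m\in\{1,\dots,n-1\},\ j_m=n$, this is the most symmetric, since the $n$-th node is its own mirror and no pairing with an index $2n-j_m$ intervenes. The only points needing care are recording the correct matrix form of $h_n$ and tracking the sign when passing from $v_n$ to $\ln'(y_{n-1}/y_n)$; both are immediate once \Ref{v form} and \Ref{form der c-m n} are invoked.
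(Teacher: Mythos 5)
Your proposal is correct and follows essentially the same route as the paper's proof: the form \Ref{k3} kills all $X_k$ with $k\neq n,n+1$, Lemma \ref{lem 6.20} then reduces to the homogeneous equation $X'_n+2v_nX_n=0$, and solving it with $v_n=\ln'(y_n/y_{n-1})$ and comparing to \Ref{form der c-m n} gives \Ref{X(c)0}. The only difference is that you spell out the intermediate bookkeeping (the vanishing of $\sum_{k=1}^{n-1}v_kX_k$, the matrix form of $h_n$, and the proportionality constant $A(c)/a$) which the paper leaves implicit.
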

	
	\begin{proof}
By assumptions we have $X_{i}=0$ for $i\ne n, n+1$.
By Lemma \ref{lem 6.20} we have $X_n\rq{}+2v_nX_n=0$, where 
$v_n=\ln'\frac{y_n}{y_{n-1}}$.  Hence
$X_n=-X_{n+1}=A(c)\frac{y_{n-1}(x,{\tilde c},m-1)^2}
{y_{n}(x,{\tilde c},c_m,m)^2}$ for some scalar function $A(c)$.
Lemma \ref{lem der c-m} yields equation \Ref{X(c)0}.
\end{proof}

	\section{Vector fields}
	\label{sec Vector fields}

	\subsection{Statement}
	\label{sec statement}
	
	Let $r>0$ be odd. Recall that  we denote by $ \frac \der{\der t_r}$ the $r$-th mKdV vector field
	on the space  $\mc M(\AT)$  of Miura opers of type $\AT$. We also denote by $ \frac \der{\der t_r}$ the $r$-th mKdV vector field of type $\At$
	on the space  $\mc M(\At)$  of Miura opers of type $\At$.
	We have a natural embedding $\mc M(\AT) \hookrightarrow \mc M(\At)$.
	Under this embedding the vector $ \frac \der{\der t_r}$  on $\mc M(\AT)$ equals the vector filed $ \frac \der{\der t_r}$ on
	$\mc M(\At)$ restricted to  $\mc M(\AT)$,  see Section \ref{sec comp}.
	We also denote by  $ \frac \der{\der t_r}$ the $r$-th KdV vector field
	on the space  $\D$, see  Section \ref{sec KdV}.
	
	For a Miura map $\frak m_i : \mc M\to \D,\ \Ll \mapsto L_i,$ denote by $d\frak m_i$  the associated
	derivative map $T\mc M(\At) \to T\D$ of tangent spaces.  By Theorem
	\ref{thm mkdvtokdv} we have $d\frak m_i:  \frac \der{\der t_r}\big|_{\Ll}
	\mapsto \frac \der{\der t_r}\big|_{L_i}$.

	Fix a degree increasing sequence $J=(j_1,\dots, j_m)$. Consider the associated family $\mu^J:\C^m\to \mc M(\AT)$
	of Miura opers.
	For a vector field $\Ga $ on $\C^m$, we denote by  $\frak{L}_\Ga\mu^J$
	 the derivative of $\mu^J$ along the vector field.
	The derivative is well-defined since $\mc M(\AT)$ is an affine space.

	\begin{thm}
		\label{thm main}
		Let $r>0$ be odd. Then there exists a polynomial vector field $\Ga_r$ on $\C^m$
		such that
		\bean
		\label{formula main}
		\frac \der{\der t_r}\Big|_{\mu^J(c)} =\frak{L}_{\Ga_r}\mu^J(c) 
		\eean
		for all $c\in\C^m$. If $r>2m$, then
$		\frac \der{\der t_r}\big|_{\mu^J(c)} =0$ for all $c\in\C^m$.
	\end{thm}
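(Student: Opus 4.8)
The plan is to prove the representation \Ref{formula main} by induction on the length $m$ of $J$, and to deduce the vanishing for $r>2m$ directly from Corollary \ref{cor der} by counting degrees. In the base case $m=0$ the family $\mu^\8$ is the single Miura oper $\Ll^\8=\der+\La^{(2)}$ and $\C^0$ is a point, so it suffices to observe that, with $U^\8=0$ in Corollary \ref{cor der}, $\frac\der{\der t_r}\big|_{\Ll^\8}=-\frac\der{\der x}\big((\La^{(2)})^r\big)^0=0$, because $(\La^{(2)})^r$ is homogeneous of positive degree $r$ and so has no component in degree $0$.

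For the inductive step I would set $\tilde J=(j_1,\dots,j_{m-1})$, write $c=(\tilde c,c_m)$, let $\pi:\C^m\to\C^{m-1}$ be the projection, and use the inductive hypothesis to fix a polynomial vector field $\tilde\Ga_r=\sum_{\ell=1}^{m-1}(\tilde\Ga_r)_\ell\frac\der{\der c_\ell}$ on $\C^{m-1}$ with $\frac\der{\der t_r}\big|_{\mu^{\tilde J}(\tilde c)}=\sum_{\ell=1}^{m-1}(\tilde\Ga_r)_\ell\frac{\der\mu^{\tilde J}}{\der c_\ell}(\tilde c)$. Then I would consider the tangent field
\[
X(c)=\frac\der{\der t_r}\Big|_{\mu^J(c)}-\sum_{\ell=1}^{m-1}(\tilde\Ga_r)_\ell(\tilde c)\,\frac{\der\mu^J}{\der c_\ell}(c)\ \in\ T_{\mu^J(c)}\mc M(\AT),
\]
and try to verify the two hypotheses of Proposition \ref{prop 6.8}, \ref{prop 6.13} or \ref{prop n} (according as $j_m=0$, $1\le j_m\le n-1$, or $j_m=n$); this would give $X(c)=A(c)\frac{\der\mu^J}{\der c_m}(c)$ and hence \Ref{formula main} with $\Ga_r=\pi^*\tilde\Ga_r+A\frac\der{\der c_m}$. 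For the kernel hypothesis, fix an admissible index $i$ (respectively $i\ne0$; $i\notin\{j_m,2n-j_m\}$; $i\ne n$). Lemma \ref{lem j j'} gives $\frak m_i\circ\mu^J=\frak m_i\circ\mu^{\tilde J}\circ\pi$, so $d\frak m_i\big(\frac{\der\mu^J}{\der c_\ell}\big)=d\frak m_i\big(\frac{\der\mu^{\tilde J}}{\der c_\ell}\big)$ for $\ell\le m-1$ and $\frak m_i(\mu^J(c))=\frak m_i(\mu^{\tilde J}(\tilde c))$. Since $d\frak m_i$ carries the $r$-th mKdV field to the $r$-th KdV field (Theorem \ref{thm mkdvtokdv}), the image $d\frak m_i\big(\frac\der{\der t_r}\big|_{\mu^J(c)}\big)$ is the KdV field at $\frak m_i(\mu^{\tilde J}(\tilde c))$, which by induction equals $\sum_\ell(\tilde\Ga_r)_\ell\,d\frak m_i\big(\frac{\der\mu^{\tilde J}}{\der c_\ell}\big)$; the two contributions to $d\frak m_i(X(c))$ then cancel, giving $d\frak m_i(X(c))=0$ for all admissible $i$ and all $c$.

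For the form hypothesis I would split $X(c)$ as $\big(\frac\der{\der t_r}\big|_{\mu^J(c)}-\frac\der{\der t_r}\big|_{\mu^{\tilde J}(\tilde c)}\big)+\big(\frac\der{\der t_r}\big|_{\mu^{\tilde J}(\tilde c)}-\sum_{\ell=1}^{m-1}(\tilde\Ga_r)_\ell\frac{\der\mu^J}{\der c_\ell}(c)\big)$. The first summand has exactly the shape of the right-hand side of \Ref{k1}, \Ref{k2} or \Ref{k3} by Proposition \ref{prop77}. For the second, formulas \Ref{oper2} and \Ref{g's} show that $\mu^{\tilde J}(\tilde c)$ and $\mu^J(c)$ involve the same functions $g_1,\dots,g_{m-1}$ and differ only by the extra term $-g_m h_{j_m}$, whence $\frac{\der\mu^{\tilde J}}{\der c_\ell}(\tilde c)-\frac{\der\mu^J}{\der c_\ell}(c)=\frac{\der g_m}{\der c_\ell}(x,c)\,h_{j_m}$; applying the inductive hypothesis, the second summand is a scalar multiple of $h_{j_m}$, which is again of the form \Ref{k1}/\Ref{k2}/\Ref{k3}. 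With both hypotheses in hand, Propositions \ref{prop 6.8}, \ref{prop 6.13} and \ref{prop n} yield $X(c)=A(c)\frac{\der\mu^J}{\der c_m}(c)$. The step I expect to be the main obstacle is showing that the coefficient $A(c)$ is a \emph{polynomial}: it is plainly a rational function of $c$ (divide one entry of $X(c)$ by the corresponding entry of $\frac{\der\mu^J}{\der c_m}$ from Lemma \ref{lem der c-m}), and I would deduce regularity on all of $\C^m$, hence polynomiality, from the fertility of every tuple in the image of $Y^J$ and the limit statement of Theorem \ref{fertile cor}(iv), which together forbid poles in codimension one.

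Finally, the vanishing for $r>2m$ I would prove independently of the induction. By Corollary \ref{cor der}, $\frac\der{\der t_r}\big|_{\mu^J(c)}=-\frac\der{\der x}\big(e^{-\ad U^J(c)}(\La^{(2)})^r\big)^0$, and by \Ref{T} together with Lemma \ref{lem exp}, in the $2n\times 2n$ realization $e^{-\ad U^J(c)}(\La^{(2)})^r=e^{-U}(\La^{(2)})^r e^{U}$, where $e^{-U}$ and $e^{U}$ are each products of $m$ factors of the form $1\pm g_\ell M_\ell(\La^{(2)})^{-1}$ with $M_\ell$ diagonal. Expanding the product, every monomial is homogeneous of degree $r-p-q$, where $p,q\in\{0,\dots,m\}$ count the factors taken from the left and from the right; a term of degree $0$ would require $p+q=r$, impossible once $r>2m$. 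Hence $\big(e^{-\ad U^J(c)}(\La^{(2)})^r\big)^0=0$ and $\frac\der{\der t_r}\big|_{\mu^J(c)}=0$ for every $c\in\C^m$, which re-proves the second assertion directly.
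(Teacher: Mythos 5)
Your proposal follows essentially the same route as the paper's proof: induction on $m$, with the difference $\frac\der{\der t_r}\big|_{\mu^J(c)}-\frak{L}_{\Ga_{r,\tilde J}}\mu^J(c)$ shown to lie in the kernels of the relevant $d\frak m_i$ and to have the shape of \Ref{k1}--\Ref{k3}, so that Propositions \ref{prop 6.8}, \ref{prop 6.13}, \ref{prop n} identify it with a multiple of $\der\mu^J/\der c_m$, while the $r>2m$ vanishing is exactly the paper's degree count via Corollary \ref{cor der} and Lemma \ref{lem exp}, which you simply spell out. The only deviations are cosmetic: you start the induction at $m=0$ instead of the paper's explicit $m=1$ computation, and you sketch the polynomiality of the new coefficient $\ga_m$ where the paper defers to \cite[Proposition 5.9]{VWr}.
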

	
	\begin{cor}
		\label{cor Main}
		The family $\mu^J$ of Miura opers is invariant with respect to all mKdV flows of type $\AT$ and
the family 		is point-wise fixed by flows with $r>2m$.
	\end{cor}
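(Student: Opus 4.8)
\emph{The plan is to induct on the length $m$ of $J=(j_1,\dots,j_m)$, treating all odd $r>0$ at once.} For $m=0$ we have $\mu^\8=\Ll^\8=\der+\La^{(2)}$, so Corollary \ref{cor der} gives $\frac\der{\der t_r}\big|_{\Ll^\8}=-\frac\der{\der x}\big((\La^{(2)})^r\big)^0$; since for odd $r$ the element $(\La^{(2)})^r=\La^{(2)}_r$ is homogeneous of standard degree $r>0$, its degree-zero component is absent and the flow vanishes, so we take $\Ga_r=0$. As $2m=0<r$, this settles the base case including the second assertion. For the inductive step write ${\tilde J}=(j_1,\dots,j_{m-1})$, let $\Ga_r^{\tilde J}$ be the polynomial field on $\C^{m-1}$ from the inductive hypothesis, so that $\frac\der{\der t_r}\big|_{\mu^{\tilde J}({\tilde c})}=\frak L_{\Ga_r^{\tilde J}}\mu^{\tilde J}({\tilde c})$, and let $\widehat\Ga_r=\sum_{\ell<m}(\Ga_r^{\tilde J})_\ell\,\der_{c_\ell}$ be the lift of $\Ga_r^{\tilde J}$ to $\C^m$ that ignores $c_m$. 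The central object I would study is
\[
Y(c)=\frac\der{\der t_r}\Big|_{\mu^J(c)}-\frak L_{\widehat\Ga_r}\mu^J(c)\ \in\ T_{\mu^J(c)}\mc M(\AT).
\]
The goal is to show that $Y(c)$ satisfies the hypotheses of Proposition \ref{prop 6.8}, \ref{prop 6.13}, or \ref{prop n} (according as $j_m=0$, $1\le j_m\le n-1$, or $j_m=n$), giving $Y(c)=A(c)\,\frac{\der\mu^J}{\der c_m}(c)$ for a scalar $A(c)$; then $\Ga_r:=\widehat\Ga_r+A(c)\,\der_{c_m}$ verifies \Ref{formula main}.

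To check the kernel hypothesis $d\frak m_i\big|_{\mu^J(c)}(Y(c))=0$ for the relevant indices $i$, I would combine three inputs. By Theorem \ref{thm mkdvtokdv}, $d\frak m_i$ carries the mKdV field to the KdV field, so $d\frak m_i\big(\frac\der{\der t_r}\big|_{\mu^J(c)}\big)=\frac\der{\der t_r}\big|_{\frak m_i(\mu^J(c))}$. By the chain rule $d\frak m_i\big(\frak L_{\widehat\Ga_r}\mu^J\big)=\frak L_{\widehat\Ga_r}(\frak m_i\circ\mu^J)$, and by Lemma \ref{lem j j'} the composite $\frak m_i\circ\mu^J$ is independent of $c_m$ and equals $\frak m_i\circ\mu^{\tilde J}$, so this reduces to $\frak L_{\Ga_r^{\tilde J}}(\frak m_i\circ\mu^{\tilde J})=d\frak m_i\big(\frac\der{\der t_r}\big|_{\mu^{\tilde J}({\tilde c})}\big)=\frac\der{\der t_r}\big|_{\frak m_i(\mu^{\tilde J}({\tilde c}))}$, using the inductive hypothesis and Theorem \ref{thm mkdvtokdv} once more. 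Finally Lemma \ref{lem j j'} gives $\frak m_i(\mu^J(c))=\frak m_i(\mu^{\tilde J}({\tilde c}))$, so the two KdV vectors coincide and $d\frak m_i(Y(c))=0$. \emph{The hard part is exactly this base-point bookkeeping:} it is precisely to apply the chain rule at the point $\mu^J(c)$ that $Y$ must be defined through $\frak L_{\widehat\Ga_r}\mu^J$ rather than through the flow at $\mu^{\tilde J}({\tilde c})$, since $d\frak m_i$ depends on its base point.

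The remaining hypothesis, that $Y$ has the shape of the right-hand side of \Ref{k1}, \Ref{k2}, or \Ref{k3}, I would get from Proposition \ref{prop77}: the difference $\frac\der{\der t_r}\big|_{\mu^J}-\frac\der{\der t_r}\big|_{\mu^{\tilde J}}$ has that form, and $Y$ differs from it by $\frak L_{\Ga_r^{\tilde J}}\mu^{\tilde J}-\frak L_{\widehat\Ga_r}\mu^J=\sum_{\ell<m}(\Ga_r^{\tilde J})_\ell\big(\frac{\der\mu^{\tilde J}}{\der c_\ell}-\frac{\der\mu^J}{\der c_\ell}\big)$, which by \Ref{oper2} is a function multiple of $h_{j_m}$; since $h_{j_m}$ itself is of the admissible diagonal form, so is $Y$. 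Propositions \ref{prop 6.8}, \ref{prop 6.13}, \ref{prop n} then yield $Y(c)=A(c)\,\frac{\der\mu^J}{\der c_m}(c)$, where $\frac{\der\mu^J}{\der c_m}$ is the explicit multiple of $h_{j_m}$ computed in Lemma \ref{lem der c-m}, and the inductive step closes.

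For the vanishing when $r>2m$, I would read it off Corollary \ref{cor der}. Writing $e^{-\ad U^J(c)}(\La^{(2)})^r=\big(e^{g_mf_{j_m}}\cdots e^{g_1f_{j_1}}\big)(\La^{(2)})^r\big(e^{-g_1f_{j_1}}\cdots e^{-g_mf_{j_m}}\big)$ with each exponential truncating to $1\pm g_\ell f_{j_\ell}$ by Lemma \ref{lem exp}, the degree-$r$ element $(\La^{(2)})^r$ is conjugated by $m$ factors carrying $f$ on each side, each of standard degree $-1$; hence $e^{-\ad U^J(c)}(\La^{(2)})^r$ has components only in degrees $r-2m,\dots,r$. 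If $r>2m$ then $0$ is not among them, the degree-zero part is absent, and $\frac\der{\der t_r}\big|_{\mu^J(c)}=0$; together with $\Ga_r^{\tilde J}=0$ from the inductive hypothesis this forces $\Ga_r=0$. The one computational point I would still need to pin down is polynomiality of $A(c)$: since $Y(c)$ and $\frac{\der\mu^J}{\der c_m}(c)$ are rational in $x$ with coefficients polynomial in $c$ (the entries of $\mu^J$ being logarithmic derivatives of the monic polynomials $Y^J(c)$), their scalar ratio $A(c)$ is rational in $c$, and the same degree bound controls it, giving a polynomial vector field $\Ga_r$ and, with $\Ga_r^{\tilde J}$ polynomial by induction, the full claim.
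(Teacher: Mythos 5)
Your proposal is correct and follows essentially the same route as the paper: the corollary is an immediate consequence of Theorem \ref{thm main}, and your induction on $m$ --- comparing $\frac{\der}{\der t_r}\big|_{\mu^J(c)}$ with the lift of the inductive vector field, checking the kernel condition via Theorem \ref{thm mkdvtokdv} and Lemma \ref{lem j j'}, identifying the shape of the difference via Proposition \ref{prop77}, and invoking Propositions \ref{prop 6.8}, \ref{prop 6.13}, \ref{prop n} --- is exactly the paper's proof of that theorem (Lemmas \ref{lem ker 1}, \ref{lem differ} and Proposition \ref{prop ind}), with the degree count $r-2m,\dots,r$ for the $r>2m$ vanishing matching the paper's appeal to Corollary \ref{cor der} and Lemma \ref{lem exp}. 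The only divergences are cosmetic: you start the induction at $m=0$ instead of the paper's explicit $m=1$ base case, and, like the paper, you defer the polynomiality of the last coefficient to an argument along the lines of \cite[Proposition 5.9]{VWr}.
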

	
In other words, every mKdV flow corresponds to a flow on the space
of integration parameters $c\in\C^m$. 
Informally speaking, we may say, that the integration parameters
$c = (c_1,\dots,c_m)$ are times of the mKdV flows.

	\subsection{Proof of Theorem \ref{thm main} for $m=1$}
Let  $J=(j_1)$. Then 
$\mu^{J}(c_1) = e^{g_1f_{j_1}}\Ll^\emptyset e^{-g_1 f_{j_1}}=
\der +\La -g_1h_{j_1}$,
where $g_1= \frac 1{x+c_1}$, see formula \Ref{T}.
We have
\bean
\label{m=1r}
\frac{\der}{\der t_r}\Big|_{\mu^J(c_1)} =-\frac{\der}{\der x}
\Big[e^{g_1f_{j_1}} \La^r e^{-g_1 f_{j_1}}\Big]^0.
\eean

Assume  $j_1\in\{1,\dots,n-1\}$. Then
$e^{g_1f_{j_1}} = 1+g_1(e_{j_1,j_1}+e_{2n-j_1,2n-j_1})\Lambda^{-1}$.

For $r$ odd and $r>1$, the right-hand side of \Ref{m=1r} is zero. Hence
 $\frac{\der}{\der t_r}|_{\mu^J(c_1)}=\Ga_r=0$. 
For $r=1$ we have
\bea
\frac{\der}{\der t_1}\Big|_{\mu^J(c_1)} =
-\frac{\der}{\der x}
\Big[e^{g_1f_{j_1}} \La e^{-g_1 f_{j_1}}\Big]^0 =\frac{\der}{\der x}g_1h_{j_1}= -\frac 1{(x+c_1)^2}h_{j_1} = -\frac {\der\mu^J}{\der c_1} (c_1).  
\eea
Hence $\Ga_1=-\frac{\der}{\der c_1}$.

Assume $j_1=n$. By formula \Ref{T mkdv}, we have
	\bean
\label{La r>1}
		\frac \der{\der t_r}\Big|_{\mu^J(c_1)}
	=
 - \frac {\der}{\der x}\Big[(1 + g_1e_{n,n})\La^{-1})
 \Lambda^r (1 - g_1e_{n,n})\La^{-1} \Big]^0.
	\eean

For $r$ odd and $r>1$, we have 
$\frac \der{\der t_r}\big|_{\mu^J(c_1)}=0$ by \Ref{La r>1} and 
Lemma \ref{lem lambda}. 
Hence	$\Ga_r=0$. For $r=1$, we have 
\bea
\frac \der{\der t_r}\Big|_{\mu^J(c_1)}= -\frac{ dg_1}{dx} (e_{n,n}-e_{n+1,n+1}) = \frac {dg_1}{dx} h_n=
-\frac 1{(x+c_1)^2}h_n=- \frac {\der\mu^{J}}{\der c_1} (c_1).
\eea
 Hence $\Ga_1=-\frac\der{\der c_1}$.

Assume $j_1=0$.
By formula \Ref{T mkdv}, we have
	\bean
\label{0La r>1}
		\frac \der{\der t_r}\Big|_{\mu^J(c_1)}=
 - \frac {\der}{\der x}\Big[(1 + g_1e_{2n,2n})\La^{-1}
	 \Lambda^r (1 - g_1e_{2n,2n})\La^{-1} \Big]^0.
	\eean

 For $r$ odd and $r>1$, we have $\frac \der{\der t_r}\big|_{\mu^J(c_1)}=0$ 
by \Ref{0La r>1} and  Lemma \ref{lem lambda}. Hence 
	$\Ga_r=0$. For $r=1$, we have 
\bea
\frac \der{\der t_r}\big|_{\mu^J(c_1)}= -\frac{ dg_1}{dx} (e_{2n,2n}-e_{1,1}) = \frac {dg_1}{dx} h_0=
-\frac 1{(x+c_1)^2}h_0=- \frac {\der\mu^{J}}{\der c_1} (c_1).
\eea
 Hence $\Ga_1=-\frac\der{\der c_1}$.
	Theorem \ref{thm main} is proved for $m=1$.
	
	\subsection{Beginning of proof of Theorem \ref{thm main} for $m>1$}
		
	We prove the first statement of Theorem \ref{thm main} by induction on $m$. Let $J=(j_1,\dots,j_m)$.
	Assume that the statement is proved for ${\tilde J}=(j_1,\dots,j_{m-1})$.
	Let
	\bea
	Y^{{\tilde J}}\ : \ \C^{m-1} \to  (\C[x])^{n+1}, \quad
	{\tilde c}=(c_1,\dots,c_{m-1}) \ \mapsto \ (y_0(x,{\tilde c}),\dots, y_n(x,{\tilde c}))
	\eea
	be the generation of tuples in the ${\tilde J}$-th direction. Then the generation
	of tuples in the $J$-th direction is
	\bea
	Y^{J}\ :\ \C^m \mapsto (\C[x])^{n+1}, \quad
	({\tilde c},c_m) \mapsto
	\ (y_0(x,\tilde c),\dots,  y_{j_m,0}(x,{\tilde c}) + c_m y_{j_m}(x,{\tilde c}),\dots, y_n(x,\tilde c),
	\eea
	see \Ref{J'} and \Ref{Ja}.
	We have $g_m = \ln'(y_{j_m,0}(x,{\tilde c}) + c_m y_{j_m}(x,{\tilde c}))- \ln' (y_{j_m}(x,{\tilde c}))$,
	see \Ref{g's}.

	By the induction assumption,
	there exists a polynomial vector field $\Gamma_{r,{\tilde J}}=\sum_{i=1}^{m-1}\ga_i(\tilde c)\frac\der{\der c_i}$ on $\C^{m-1}$ such that 	for all ${\tilde c}\in\C^{m-1}$ we have
	\bean
	\label{fromula main m-1}
	\frac \der{\der t_r}\Big|_{\mu^{{\tilde J}}({\tilde c})} = 
\frak{L}_{\Ga_{r,{\tilde J}}}\mu^{{\tilde J}}(\tilde c).
	\eean

	\begin{prop}
		\label{prop ind}
		There exists a scalar polynomial
		$\ga_{m}(\tilde c,c_m)$ on $\C^m$  such that the vector field
		$\Ga_r = \Gamma_{r,{\tilde J}} + \ga_{m}({\tilde c},c_m)\frac \der{\der c_m}$
		satisfies \Ref{formula main} for all $(\tilde c,c_m)\in\C^m$.
	\end{prop}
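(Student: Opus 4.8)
The plan is to isolate the discrepancy between the $r$-th mKdV field at $\mu^J(c)$ and the flow already produced by $\Ga_{r,\tilde J}$, and to recognize this discrepancy as a multiple of $\frac{\der\mu^J}{\der c_m}(c)$ by means of the kernel Propositions \ref{prop 6.8}, \ref{prop 6.13}, \ref{prop n}. Set
\[
X(c)\ =\ \frac\der{\der t_r}\Big|_{\mu^J(c)}\ -\ \frak L_{\Ga_{r,\tilde J}}\mu^J(c).
\]
Since the $\AT$ mKdV field is tangent to $\mc M(\AT)$ and $\frak L_{\Ga_{r,\tilde J}}\mu^J$ is a derivative of the $\mc M(\AT)$-valued map $\mu^J$, we have $X(c)\in T_{\mu^J(c)}\mc M(\AT)$. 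The goal is to produce a polynomial $\ga_m(c)$ with $X(c)=\ga_m(c)\,\frac{\der\mu^J}{\der c_m}(c)$, which is exactly \Ref{X(c)0} with $A=\ga_m$; then $\Ga_r=\Ga_{r,\tilde J}+\ga_m\frac\der{\der c_m}$ satisfies \Ref{formula main}. According to whether $j_m=0$, $j_m\in\{1,\dots,n-1\}$, or $j_m=n$, I will check the two hypotheses of Proposition \ref{prop 6.8}, \ref{prop 6.13}, or \ref{prop n}: that $X(c)$ is annihilated by $d\frak m_i$ for the admissible indices $i$, and that $X(c)$ has the shape \Ref{k2}, \Ref{k1}, or \Ref{k3}.

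For the kernel conditions, fix an admissible index $i$ (namely $i\neq 0$ if $j_m=0$; $i\notin\{j_m,2n-j_m\}$ if $j_m\in\{1,\dots,n-1\}$; $i\neq n$ if $j_m=n$). By Theorem \ref{thm mkdvtokdv} the Miura map intertwines the flows, $d\frak m_i\big(\frac\der{\der t_r}|_\Ll\big)=\frac\der{\der t_r}|_{\frak m_i(\Ll)}$, so $d\frak m_i\big(\frac\der{\der t_r}|_{\mu^J(c)}\big)=\frac\der{\der t_r}|_{\frak m_i(\mu^J(c))}$; by Lemma \ref{lem j j'} the admissible indices are exactly those for which $\frak m_i\circ\mu^J=\frak m_i\circ\mu^{\tilde J}$, so this equals $\frac\der{\der t_r}|_{\frak m_i(\mu^{\tilde J}(\tilde c))}$. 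For the second term, the chain rule together with $\frak m_i\circ\mu^J=\frak m_i\circ\mu^{\tilde J}$ (which is independent of $c_m$, while $\Ga_{r,\tilde J}$ differentiates only $\tilde c$) gives $d\frak m_i\big(\frak L_{\Ga_{r,\tilde J}}\mu^J(c)\big)=d\frak m_i\big(\frak L_{\Ga_{r,\tilde J}}\mu^{\tilde J}(\tilde c)\big)$, which by the induction hypothesis \Ref{fromula main m-1} and the same intertwining equals $\frac\der{\der t_r}|_{\frak m_i(\mu^{\tilde J}(\tilde c))}$. The two contributions cancel, so $d\frak m_i(X(c))=0$ for every admissible $i$.

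For the shape of $X(c)$, split $X(c)=\big(\frac\der{\der t_r}|_{\mu^J(c)}-\frac\der{\der t_r}|_{\mu^{\tilde J}(\tilde c)}\big)+\big(\frac\der{\der t_r}|_{\mu^{\tilde J}(\tilde c)}-\frak L_{\Ga_{r,\tilde J}}\mu^J(c)\big)$. The first summand has precisely the form \Ref{k1}, \Ref{k2}, or \Ref{k3} by Proposition \ref{prop77}. For the second summand, since $g_\ell$ depends only on $c_1,\dots,c_\ell$, formula \Ref{oper2} gives $\mu^J(c)=\mu^{\tilde J}(\tilde c)-g_m(x,c)\,h_{j_m}$, hence $\frak L_{\Ga_{r,\tilde J}}\mu^J(c)=\frak L_{\Ga_{r,\tilde J}}\mu^{\tilde J}(\tilde c)-\big(\sum_{i<m}\ga_i(\tilde c)\frac{\der g_m}{\der c_i}\big)h_{j_m}$; invoking \Ref{fromula main m-1} again, the second summand equals $\big(\sum_{i<m}\ga_i(\tilde c)\frac{\der g_m}{\der c_i}\big)h_{j_m}$, a scalar multiple of $h_{j_m}$. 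Under the embedding $\rho$ of Section \ref{sec sub}, $h_{j_m}$ is exactly a vector of the form \Ref{k1}/\Ref{k2}/\Ref{k3} (with the two coefficients equal), so $X(c)$ has the required shape, consistent with $X(c)\in T_{\mu^J(c)}\mc M(\AT)$. The relevant Proposition among \ref{prop 6.8}, \ref{prop 6.13}, \ref{prop n} then yields $X(c)=A(c)\,\frac{\der\mu^J}{\der c_m}(c)$ for a scalar $A(c)$ independent of $x$.

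The main obstacle is the last step, showing that $A(c)$ is in fact a polynomial, so that $\ga_m:=A$ is admissible. A priori $A(c)$ is only rational, since $g_\ell$ is rational in $c$. The key simplification is that the matrix $A_r$ of Proposition \ref{prop77} is assembled from $g_1,\dots,g_{m-1}$ only, so the entire $c_m$-dependence of $X(c)$ enters through $g_m$. Comparing the $h_{j_m}$-coefficients in $X(c)=A(c)\frac{\der\mu^J}{\der c_m}(c)$ and using Lemma \ref{lem der c-m} together with \Ref{univ} to write $\frac{\der\mu^J}{\der c_m}(c)=-a\,y_{j_m}(x,c,m)^{-2}\prod_{i\neq j_m}y_i(x,\tilde c)^{-a_{i,j_m}}\,h_{j_m}$, whose numerator is independent of $c_m$, I would clear the denominator $y_{j_m}(x,c,m)^2$ to obtain an identity of polynomials in $x$ with coefficients polynomial in $c$. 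Because $A(c)$ does not depend on $x$, it is recovered from the top-degree coefficient in $x$ (equivalently from the $x\to\infty$ expansion), and this expression is a genuine polynomial in $c$ with no surviving poles. Setting $\ga_m=A$ completes the induction and proves the Proposition.
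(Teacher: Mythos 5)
Your proposal follows essentially the same route as the paper: the same splitting of the discrepancy $X(c)$ into the two summands handled by Proposition \ref{prop77} and by the $h_{j_m}$-multiple coming from \Ref{oper2}, the same kernel computation via Theorems \ref{thm mkdvtokdv} and \ref{thm gaugemiura} together with the induction hypothesis, and the same appeal to Propositions \ref{prop 6.8}, \ref{prop 6.13}, \ref{prop n}. The only divergence is at the polynomiality of $\ga_m$, where you sketch a denominator-clearing argument while the paper simply cites \cite[Proposition 5.9]{VWr}; your sketch is plausible and consistent with that reference.
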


	\subsection{Proof of Proposition \ref{prop ind}}
	\label{sec proof prop ind}

\begin{lem}
\label{lem ker 1}
Let $j_m\in\{1,2,\dots,n-1\}$, then we have 
	\bean
		\label{formula for ker}
		d\frak m_i\Big|_{\mu^J({\tilde c},c_m)} \left(\frac{\der}{\der t_r}\Big|_{\mu^J({\tilde c},c_m)} -
\frak{L}_{ \Ga_{r,\tilde J}} \mu^J(\tilde c,c_m)\right) = 0.
		\eean
 for all $i\notin \{j_m, 2n-j_m\}$.

\end{lem}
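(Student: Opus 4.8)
The plan is to show that, for $i\notin\{j_m,2n-j_m\}$, applying $d\frak m_i$ to each of the two terms inside \Ref{formula for ker} yields the \emph{same} KdV vector field, so their difference is zero. The three tools are: the intertwining property $d\frak m_i:\frac\der{\der t_r}\big|_{\Ll}\mapsto\frac\der{\der t_r}\big|_{L_i}$ recorded in Section \ref{sec statement} as a consequence of Theorem \ref{thm mkdvtokdv}; Lemma \ref{lem j j'}, which for $i\notin\{j_m,2n-j_m\}$ gives $\frak m_i\circ\mu^J(\tilde c,c_m)=\frak m_i\circ\mu^{\tilde J}(\tilde c)$, so that the composite $\frak m_i\circ\mu^J$ does not depend on $c_m$; and the induction hypothesis \Ref{fromula main m-1}. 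Throughout, $\Ga_{r,\tilde J}$ is regarded as a vector field on $\C^m$ with no $\der/\der c_m$ component, so that it is $\pi$-related to itself under the projection $\pi:\C^m\to\C^{m-1}$ forgetting $c_m$.

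First I would treat the mKdV term. Since $\frak m_i$ carries the mKdV flow to the KdV flow, the intertwining property gives
\bea
d\frak m_i\big|_{\mu^J(\tilde c,c_m)}\Big(\frac\der{\der t_r}\Big|_{\mu^J(\tilde c,c_m)}\Big)
&=&\frac\der{\der t_r}\Big|_{\frak m_i(\mu^J(\tilde c,c_m))}.
\eea
By Lemma \ref{lem j j'} the base point $\frak m_i(\mu^J(\tilde c,c_m))$ equals $\frak m_i(\mu^{\tilde J}(\tilde c))$, so this KdV vector is in fact $\frac\der{\der t_r}\big|_{\frak m_i(\mu^{\tilde J}(\tilde c))}$.

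Next I would treat the Lie-derivative term by the chain rule $d\frak m_i\circ d\mu^J=d(\frak m_i\circ\mu^J)$:
\bea
d\frak m_i\big|_{\mu^J(\tilde c,c_m)}\big(\frak{L}_{\Ga_{r,\tilde J}}\mu^J(\tilde c,c_m)\big)
&=&\frak{L}_{\Ga_{r,\tilde J}}\big(\frak m_i\circ\mu^J\big)(\tilde c,c_m).
\eea
Because $\frak m_i\circ\mu^J$ is independent of $c_m$ and equals $\frak m_i\circ\mu^{\tilde J}$ (Lemma \ref{lem j j'}), and because $\Ga_{r,\tilde J}$ differentiates only the $\tilde c$-variables, the right-hand side equals $\frak{L}_{\Ga_{r,\tilde J}}(\frak m_i\circ\mu^{\tilde J})(\tilde c)=d\frak m_i\big|_{\mu^{\tilde J}(\tilde c)}\big(\frak{L}_{\Ga_{r,\tilde J}}\mu^{\tilde J}(\tilde c)\big)$. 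The induction hypothesis \Ref{fromula main m-1} then replaces $\frak{L}_{\Ga_{r,\tilde J}}\mu^{\tilde J}(\tilde c)$ by $\frac\der{\der t_r}\big|_{\mu^{\tilde J}(\tilde c)}$, and a second application of the intertwining property turns this into $\frac\der{\der t_r}\big|_{\frak m_i(\mu^{\tilde J}(\tilde c))}$, the same vector as from the mKdV term.

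Since both terms reduce to $\frac\der{\der t_r}\big|_{\frak m_i(\mu^{\tilde J}(\tilde c))}$, their difference vanishes, which is exactly \Ref{formula for ker}. I do not anticipate a serious obstacle: the content is the careful bookkeeping of these three reductions together with the observation that $\Ga_{r,\tilde J}$ on $\C^m$ and the $c_m$-independence of $\frak m_i\circ\mu^J$ are compatible under $\pi$. The one point that needs care is that the intertwining property of Theorem \ref{thm mkdvtokdv} is stated for type $\At$ flows, so before invoking it I would use the comparison of Section \ref{sec comp} to identify the type $\AT$ field $\frac\der{\der t_r}$ on $\mc M(\AT)$ with the restriction of the type $\At$ field under the embedding $\mc M(\AT)\hookrightarrow\mc M(\At)$.
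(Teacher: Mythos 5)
Your proposal is correct and follows essentially the same route as the paper: both arguments reduce each of the two terms to the same KdV vector $\frac\der{\der t_r}\big|_{\frak m_i\circ\mu^{\tilde J}(\tilde c)}$ using the gauge-invariance $\frak m_i\circ\mu^J(\tilde c,c_m)=\frak m_i\circ\mu^{\tilde J}(\tilde c)$ (Theorem \ref{thm gaugemiura}, packaged in Lemma \ref{lem j j'}), the intertwining property of Theorem \ref{thm mkdvtokdv}, and the induction hypothesis \Ref{fromula main m-1}. The only cosmetic difference is that you apply the induction hypothesis upstairs on $\mc M(\AT)$ and then push forward, whereas the paper states it directly for the pushed-forward fields; these are equivalent via the same intertwining property.
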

		
\begin{proof}
The proof  is the same as the proof of Lemma 5.5 in \cite{VWr}.
Namely, by Theorem \ref{thm gaugemiura} we have $\frak m_i \circ \mu^J(\tilde c,c_m)
=
\frak m_i \circ \mu^{\tilde J}(\tilde c)$ for all 
$i\notin \{j_m, 2n-j_m\}$.
Hence,
\bean
\label{Ga inv}
d\frak m_i\big|_{\mu^J({\tilde c},c_m)} \left(
\frak{L}_{ \Ga_{r,\tilde J}} \mu^J(\tilde c,c_m)\right)
=
\frak{L}_{ \Ga_{r,\tilde J}} (\frak m_i \circ\mu^J) ({\tilde c},c_m)=
\frak{L}_{ \Ga_{r,\tilde J}} (\frak m_i \circ\mu^{\tilde J}) ({\tilde c}).
 \eean
By Theorems \ref{thm mkdvtokdv} and \ref{thm gaugemiura}, we have
\bean
\label{d/dt inv}
d\frak m_i\Big|_{\mu^J({\tilde c},c_m)} \left(\frac{\der}{\der t_r}\Big|_{\mu^J({\tilde c},c_m)} \right) =
\frac{\der}{\der t_r}\Big|_{\frak m_i \circ\mu^J({\tilde c},c_m)} =
 \frac{\der}{\der t_r}\Big|_{\frak m_i \circ\mu^{{\tilde J}}({\tilde c})}.
\eean
By the induction assumption, we have
\bean
\label{formula Ga t}
\frac{\der}{\der t_r}\Big|_{\frak m_i \circ\mu^{{\tilde J}}({\tilde c})} =
\frak{L}_{ \Ga_{r,\tilde J}} (\frak m_i \circ\mu^{\tilde J}) ({\tilde c}).
\eean
These three formulas prove the lemma. The other two cases are proved similarly.
\end{proof}
	
\begin{lem}
\label{lem differ}
For $j_m\in\{1,\dots,n-1\}$,
the difference
$\frac {\der}{\der t_r}\big|_{\mu^J(c)} - 
\frak{L}_{ \Ga_{r,\tilde J}} \mu^J ({\tilde c},c_m)$
has the form indicated in the right-hand side of formula \Ref{k1}.
For $j_m=0$, the difference 
has the form indicated in the right-hand side of formula \Ref{k2}.
For $j_m=n$, the difference 
has the form indicated in the right-hand side of formula \Ref{k3}.

\end{lem}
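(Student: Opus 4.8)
The plan is to telescope the difference through the intermediate quantity $\frac{\der}{\der t_r}\big|_{\mu^{\tilde J}(\tilde c)}$. Writing $c=(\tilde c,c_m)$, I would begin from the identity
\begin{align*}
\frac{\der}{\der t_r}\Big|_{\mu^J(c)} - \frak{L}_{\Ga_{r,\tilde J}}\mu^J(\tilde c,c_m)
&= \Big(\frac{\der}{\der t_r}\Big|_{\mu^J(c)} - \frac{\der}{\der t_r}\Big|_{\mu^{\tilde J}(\tilde c)}\Big) \\
&\quad + \Big(\frac{\der}{\der t_r}\Big|_{\mu^{\tilde J}(\tilde c)} - \frak{L}_{\Ga_{r,\tilde J}}\mu^J(\tilde c,c_m)\Big).
\end{align*}
The first parenthesized term is handled immediately by Proposition \ref{prop77}: it already has the form \Ref{k1}, \Ref{k2}, or \Ref{k3} according to whether $j_m\in\{1,\dots,n-1\}$, $j_m=0$, or $j_m=n$. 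So the entire burden is to show that the second term has the same shape.

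For the second term I would invoke the induction hypothesis \Ref{fromula main m-1}, which gives $\frac{\der}{\der t_r}\big|_{\mu^{\tilde J}(\tilde c)} = \frak{L}_{\Ga_{r,\tilde J}}\mu^{\tilde J}(\tilde c)$, so that, by linearity of $\frak{L}_{\Ga_{r,\tilde J}}$ on the affine space $\mc M(\AT)$, the second term becomes $\frak{L}_{\Ga_{r,\tilde J}}\big(\mu^{\tilde J}(\tilde c)-\mu^J(\tilde c,c_m)\big)$. The key observation is that these two Miura opers differ by a single term: by formula \Ref{oper2}, $\mu^J(\tilde c,c_m) = \der + \La^{(2)} - \sum_{\ell=1}^m g_\ell h_{j_\ell}$ and $\mu^{\tilde J}(\tilde c) = \der + \La^{(2)} - \sum_{\ell=1}^{m-1} g_\ell h_{j_\ell}$, while the functions $g_\ell$ for $\ell<m$ do not depend on $c_m$ (see \Ref{g's}). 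Hence $\mu^{\tilde J}(\tilde c)-\mu^J(\tilde c,c_m) = g_m(x,\tilde c,c_m)\,h_{j_m}$.

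Now $\Ga_{r,\tilde J}=\sum_{i=1}^{m-1}\ga_i(\tilde c)\frac{\der}{\der c_i}$ differentiates only $c_1,\dots,c_{m-1}$, while $h_{j_m}$ is a constant matrix, so the second term equals $\big(\sum_{i=1}^{m-1}\ga_i\,\frac{\der g_m}{\der c_i}\big)\,h_{j_m}$, a scalar function times $h_{j_m}$. It then remains only to read off $h_{j_m}$ as a diagonal matrix using the embedding $\rho$ of Section \ref{sec sub} together with the $\gA$ realization: for $j_m\in\{1,\dots,n-1\}$ one has $h_{j_m}=H_{j_m}+H_{2n-j_m}=(e_{j_m+1,j_m+1}-e_{j_m,j_m})+(e_{2n+1-j_m,2n+1-j_m}-e_{2n-j_m,2n-j_m})$, which is exactly the shape \Ref{k1} (with $u_1=u_2$); for $j_m=0$, $h_0=e_{1,1}-e_{2n,2n}$, of shape \Ref{k2}; and for $j_m=n$, $h_n=e_{n+1,n+1}-e_{n,n}$, of shape \Ref{k3}. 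Since each of the two telescoped terms is of the advertised form, so is their sum.

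I expect no genuine obstacle here: the substantive content was already isolated in Proposition \ref{prop77}, and what remains is the bookkeeping identity $\mu^{\tilde J}(\tilde c)-\mu^J(\tilde c,c_m)=g_m\,h_{j_m}$ together with the explicit matrix form of $h_{j_m}$. The one point deserving care is the $c_m$-independence of the earlier generation data $g_1,\dots,g_{m-1}$, which is precisely what guarantees that the two opers differ only in the single $h_{j_m}$-direction; this is what confines the induction-correction term to the same diagonal pattern as the output of Proposition \ref{prop77}.
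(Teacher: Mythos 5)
Your proposal is correct and follows essentially the same route as the paper: the same telescoping through $\frac{\der}{\der t_r}\big|_{\mu^{\tilde J}(\tilde c)}$, with the first summand handled by Proposition \ref{prop77} and the second reduced, via the induction hypothesis and formula \Ref{oper2}, to $\frak{L}_{\Ga_{r,\tilde J}}(g_m)\,h_{j_m}$, whose diagonal form matches \Ref{k1}--\Ref{k3}. Your explicit remarks on the $c_m$-independence of $g_1,\dots,g_{m-1}$ and on $u_1=u_2$ for the correction term are minor elaborations of the same argument.
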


\begin{proof} We have
\bea
&&
\frac {\der}{\der t_r}\Big|_{\mu^J(c)} - \frak{L}_{ \Ga_{r,\tilde J}} \mu^J ({\tilde c},c_m)
\\
&&
\phantom{aaaaaaaaaa}
=
\frac {\der}{\der t_r}\Big|_{\mu^J(c)} - \frac {\der}{\der t_r}\Big|_{\mu^{\tilde{J}}(\tilde c)} 
 + \frac {\der}{\der t_r}\Big|_{\mu^{\tilde{J}}(\tilde c)}
 - \frak{L}_{ \Ga_{r,\tilde J}} \mu^J ({\tilde c},c_m)
\\
&&
\phantom{aaaaaaaaaa}
=
\frac {\der}{\der t_r}\Big|_{\mu^J(c)} - \frac {\der}{\der t_r}\Big|_{\mu^{\tilde{J}}(\tilde c)} 
 +\frak{L}_{ \Ga_{r,\tilde J}} \mu^{\tilde J} ({\tilde c})
 - \frak{L}_{ \Ga_{r,\tilde J}} \mu^J ({\tilde c},c_m)
\\
&&
\phantom{aaaaaaaaaa}
=
\frac {\der}{\der t_r}\Big|_{\mu^J(c)} - \frac {\der}{\der t_r}\Big|_{\mu^{\tilde{J}}(\tilde c)} 
 +\frak{L}_{ \Ga_{r,\tilde J}}g_m(x,{\tilde c},c_m)\,h_{j_m},
\eea
see formula \Ref{oper2}.  If $j_m\in\{1,\dots,n-1\}$, then 
$\frac {\der}{\der t_r}\Big|_{\mu^J(c)} - \frac {\der}{\der t_r}\Big|_{\mu^{\tilde{J}}(\tilde c)} $
has the form indicated in the right-hand side of formula \Ref{k1} by Proposition \ref{prop77}
and $\frak{L}_{ \Ga_{r,\tilde J}}g_m(x,{\tilde c},c_m)\,h_{j_m}$
has that form since 
$h_{j_m} = -e_{j_m, j_m} + e_{j_m+1, j_m+1}-e_{2n-j_m, 2n-j_m}
+e_{2n+1-j_m, 2n+1-j_m}$. 
 This proves the lemma for $j_m\in\{1,\dots,n-1\}$. The other two cases of the lemma are proved similarly.
\end{proof}

Let us finish the proof of Proposition \ref{prop ind}.
By Lemmas \ref{lem ker 1} and \ref{lem differ}
the difference $\frac {\der}{\der t_r}\big|_{\mu^J(c)} -
\frak{L}_{ \Ga_{r,\tilde J}} \mu^J ({\tilde c},c_m)$
has the form indicated in the right-hand side of one of the formulas \Ref{k1}-\Ref{k3} and lies in the kernels of the differentials
of Miura maps
$\frak{ m}_i$ for all $i\notin \{j_m, 2n-j_m\}$.  By Propositions \ref{prop 6.8}, \ref{prop 6.13}, \ref{prop n}
 we conclude that the difference has the form 
$\ga_m(\tilde c, c_m) \frac{\der \mu^J}{\der c_m}$ for some scalar	function $\ga_m(\tilde c, c_m) $
 on  $\C^m$. Therefore, 
\bea
\frac{\der}{\der t_r}\big|_{\mu^J(\tilde c, c_m)} = 
\frak{L}_{ \Ga_{r,\tilde J}} \mu^J ({\tilde c},c_m)+ 
\ga_m(\tilde c, c_m) \frac{\der \mu^J}{\der c_m}(\tilde c, c_m).
\eea
If we set $\Ga_r = \Ga_{r,\tilde J} +
\ga_m(\tilde c, c_m) \frac{\der}{\der c_m}$, then the vector field  $\Ga_r$ will satisfy formula 
\Ref{formula main}.

We need to prove that $\ga_m(\tilde c, c_m) $ is a polynomial. The proof of that fact is the same as the proof of 
\cite[Proposition 5.9]{VWr}.
Proposition \ref{prop ind} is proved.

\subsection{End of proof Theorem \ref{thm main} for $m>1$}
Proposition \ref{prop ind} implies  the first statement of Theorem \ref{thm main}. The second statement 
 says that if $r>2m$, then
$		\frac \der{\der t_r}\big|_{\mu^J(c)} =0$. But that follows from Corollary \ref{cor der} and Lemma \ref{lem exp}.

	\end{document}